\theoremstyle{definition}
\newtheorem{theorem}{Theorem}[section]
\newtheorem{lemma}[theorem]{Lemma}
\newtheorem{construction}[theorem]{Construction}
\newtheorem{defn}[theorem]{Definition}
\newtheorem{example}[theorem]{Example}
\newtheorem{remark}[theorem]{Remark}
\def\qed{\hfill$\Box$\vspace{12pt}}
\long\def\delete#1{}
\newcommand{\cay}{\mathrm{Cay}}
\def\b{\mathbf b}
\def\ZZZ{\mathbb{Z}}
\def\b0{\mathbf{0}}
\def\De{\Delta}
\def\Ga{\Gamma}
\def\Si{\Sigma}
\def\a{\alpha}
\def\b{\beta}
\def\d{\delta}
\def\z{\zeta}
\def\s{\sigma}
\def\t{\tau}
\def\Cay{{\rm Cay}}
\def\gcd{{\rm gcd}}
\newcommand{\ph}{\pi_H}
\newcommand{\baf}{\bar{f}}
\newcommand{\bag}{\bar{g}}
\newcommand{\GG}{\mathcal{G}}
\begin{document}

\title{Perfect codes in circulant graphs of degree $p^l-1$}

\author[1]{Xiaomeng Wang\thanks{E-mail: \texttt{wangxm2015@lzu.edu.cn}}}
\author[2]{Oriol Serra\thanks{E-mail: \texttt{oriol.serra@upc.edu}}}
\author[1]{Shou-Jun Xu\thanks{E-mail: \texttt{shjxu@lzu.edu.cn}}}
\author[3]{Sanming Zhou\thanks{E-mail: \texttt{sanming@unimelb.edu.au}}}

\affil[1]{\small School of Mathematics and Statistics, Gansu Center for Applied Mathematics, Lanzhou University, Lanzhou, Gansu 730000, China}
\affil[2]{\small Department of Mathematics, Universitat Polit\`ecnica de Catalunya, Barcelona, Spain}
\affil[3]{\small School of Mathematics and Statistics, The University of Melbourne, Parkville, VIC 3010, Australia}

\date{}
\maketitle
\openup 0.5\jot

\begin{abstract}
A perfect code in a graph is an independent set of the graph such that every vertex outside the set is adjacent to exactly one vertex in the set. A circulant graph is a Cayley graph of a cyclic group. In this paper we study perfect codes in circulant graphs of degree $p^l - 1$, where $p$ is a prime and $l \ge 1$. We obtain a necessary and sufficient condition for such a circulant graph to admit perfect codes, give a construction of all such circulant graphs which admit perfect codes, and prove a lower bound on the number of distinct perfect codes in such a circulant graph. This extends known results for the  case $l=1$ and provides insight on the general problem on the existence and structure of perfect codes in circulant graphs.

\textbf{Key words:} perfect code, efficient dominating set, Cayley graph, circulant graph, tiling

\textbf{AMS subject classifications (2020):} 05C25, 05C69
\end{abstract}


\section{Introduction}

All graphs considered in this paper are finite, undirected and simple, and all groups considered are finite. Let $\Gamma = (V(\Ga), E(\Ga))$ be a graph and $e \ge 1$ an integer. A subset $D$ of $V(\Ga)$ is called \cite{Biggs73, Z15} a \emph{perfect $e$-code} in $\Gamma$ if every vertex of $\Ga$ is at distance no more than $e$ to exactly one vertex in $D$. In what follows a perfect $1$-code is simply called a \emph{perfect code}. So a subset $D$ of $V(\Ga)$ is a perfect code in $\Gamma$ if and only if $D$ is an independent set of $\Ga$ and every vertex in $V(\Ga) \setminus D$ has exactly one neighbour in $D$, which holds if and only if $\{\Gamma[x]: x \in D\}$ is a partition of $V(\Ga)$, where $\Ga[x]$ is the closed neighbourhood of $x$ in $\Ga$. A perfect code in a graph is also called an \emph{efficient dominating set} \cite{DS03} or \emph{independent perfect dominating set} \cite{L01}. A graph is distance-transitive \cite{Biggs73} if any pair of vertices can be permuted to any other pair of vertices at the same distance by an automorphism of the graph. Given a group $G$ and an inverse-closed subset $S$ of $G$ excluding the identity element, the \emph{Cayley graph} $\cay(G, S)$ of $G$ with \emph{connection set} $S$ is the group with vertex set $G$ such that $x, y$ are adjacent if and only if $xy^{-1} \in S$. Cayley graphs of cyclic groups are widely known as \emph{circulant graphs}.

The study of perfect $e$-codes in graphs began with N. Biggs \cite{Biggs73} who among other things generalized the celebrated Lloyd's theorem in coding theory to all distance-transitive graphs. Since the Hamming graph $H(d, q)$ is distance-transitive, perfect $e$-codes in distance-transitive graphs can be considered as a generalization of perfect $e$-codes in classical coding theory. On the other hand, since $H(d, q)$ is the Cayley graph of $\ZZZ_q^d$ with respect to the set of elements of $\ZZZ_q^d$ with exactly one nonzero coordinate, perfect $e$-codes in Cayley graphs can be viewed as another generalization of perfect $e$-codes in coding theory. Also, perfect codes in Cayley graphs are essentially factorizations and tilings of the underlying group \cite{hfz18, Sands03, SS09}. Because of these connections, perfect codes in Cayley graphs have attracted considerable attention in recent years. See \cite[Section 1]{fhz17} and \cite[Section 1]{hfz18} for short surveys of related results and background information. See also \cite{cwx20, kak23, wwz20, zz21, zz21a} for recent results on subgroup perfect codes in Cayley graphs, where a perfect code in a Cayley graph is a \emph{subgroup perfect code} if it is a subgroup of the underlying group.

In this paper we study perfect codes in circulant graphs of degree $p^l - 1$, where $p$ is a prime and $l \ge 1$. We obtain a necessary and sufficient condition for such a circulant graph to admit perfect codes, give a construction of all such circulant graphs admitting perfect codes, and obtain a lower bound on the number of distinct perfect codes in such a circulant graph. The characterization extends to the case $l\ge 1$ the known one for the case $l=1$ and provides a relatively rich set of perfect codes in circulant graphs. A particular motivation of this paper is also to provide a deeper insight on the existence and structure of perfect codes. We note that a subset $D$ of an abelian group $G$ containing $0$ is a perfect code in $\cay(G, S)$ if and only if $(S_0, D)$ is a tiling of $G$, that is, every element of $G$ can be expressed in a unique way as $a + b$ with $a \in S_0$ and $b \in D$. So our results in this paper can be stated in terms of tilings of $\ZZZ_n$. 

\subsection{Some history}

Before presenting our results, let us give a brief account of known results on perfect codes in circulant graphs. It was observed in \cite[Remark 1]{ob07} (see also \cite[Lemma 2.3]{fhz17}) that a connected circulant graph $\Cay(\ZZZ_n, S)$ of order $n \ge 4$ admits a perfect code if $|S_0|$ divides $n$ and $s \not \equiv s'$ (mod $|S_0|$) for distinct $s, s' \in S_0$, and in this case the unique subgroup of $\ZZZ_n$ with order $n/|S_0| $ is a perfect code in $\Cay(\ZZZ_n, S)$. This sufficient condition has been proved to be necessary in the following cases:
\begin{enumerate}
\item[(i)] $|S_0| = p$ for an odd prime $p$ (\cite[Theorem 3.1]{deng17} and \cite[Theorem 1.1]{fhz17});
\item[(ii)] $|S_0| = p^l$ with $p$ a prime and $l$ the exponent of $p$ in $n$ (\cite[Theorem 3.4]{deng17} and \cite[Theorem 1.2]{fhz17});
\item[(iii)] $|S_0| = pq$ for some (not necessarily distinct) prime divisors $p, q$ of $n$ with $pq < n$ such that $n/pq$ and $pq$ are coprime and $S_0$ is periodic (\cite[Theorem 3.2]{deng17});
\item[(iv)] $n \in \{p^{k}, p^{k}q, p^{2}q^{2}, pqr, p^{2}qr, pqrs: p, q, r, s \text{ are primes and } k \ge 1\}$, $|S_0| < n$, and $n/|S_0|$ and $|S_0|$ are coprime (\cite[Theorem 3.5]{deng17});
\item[(v)] $n/|S_0|$ is a prime and $S_0$ is aperiodic (\cite[Theorem 3.2]{deng14}).
\end{enumerate}

In \cite{ob07}, Obradovi\'{c}, Peters and Ru\v{z}i\'{c} gave a necessary and sufficient condition for the existence of perfect codes in a circulant graph of degree $3$ or $4$, and showed that under this condition all perfect codes are equally spaced (that is, they are cosets of some subgroup perfect codes). In a recent paper, Kwon, Lee and Sohn \cite{kls22} gave a classification of all perfect codes in any circulant graph of degree $5$. In \cite[Theorem 3.6]{cm13}, Tamizh Chelvam and Mutharasu proved that a proper subgroup $H$ of $\ZZZ_n$ is a perfect code in some connected Cayley graph of $\ZZZ_n$ if and only if either $|H|$ or $n/|H|$ is odd. A few years later Huang, Xia and Zhou proved that this result is a special case (\cite[Corollary 2.8]{hfz18}) of a general result \cite[Theorem 2.7]{hfz18} in the framework of studying when a given subgroup is a perfect code in some Cayley graph of the group.

In \cite{kum13}, Kumar and  MacGillivray  constructed infinitely many examples of circulant graphs which admit non-equally spaced perfect codes, and they gave a characterization of circulant graphs which admit perfect codes of size $2$ or $3$ using the wreath product of a circulant graph and a complete graph. Their research prompted the following question \cite{kum13}: Is it true that in any circulant graph admitting perfect codes either all perfect codes are equally spaced or the circulant graph is the wreath product of a smaller circulant graph admitting perfect codes and a complete graph on at least two vertices? Our work in this paper shows that the answer to this question is negative; a counterexample will be given in Example~\ref{ex:ex3.4}.

In \cite{Z15}, the fourth author studied two families of cyclotomic graphs and perfect $e$-codes in them; they are Cayley graphs on the additive group of $\ZZZ[\zeta_m]/A$ with respect to $\{\pm (\zeta_m^i + A): 0 \le i \le m-1\}$ and $\{\pm (\zeta_m^i + A): 0 \le i \le \phi(m)-1\}$, respectively, where $\zeta_m$ ($m \ge 2$) is an $m$th primitive root of unity, $A$ is a nonzero ideal of $\ZZZ[\zeta_m]$, and $\phi$ is Euler's function. A family of circulant cyclotomic graphs of degree $2p$, $p$ being an odd prime, was identified in \cite[Section 5]{Z15} (see also \cite[Section 3]{TZ3}) as certain Cayley graphs associated with finite Frobenius groups. In the special case when $p=3$, a complete characterization of subgroup perfect $e$-codes in such circulant cyclotomic graphs was given in \cite[Theorem 4.7]{Z15} in conjunction with \cite[Theorem 24]{MBG}.

\subsection{Periodic sets}\label{sec:periodic}

We will only consider Cayley graphs of abelian groups. We will use additive notation for abelian groups and denote their identity elements by $0$. Thus, for an abelian group $G$, a subset $S$ of $G \setminus \{0\}$ is inverse-closed if $S=-S$, where $-S = \{-x: x \in S\}$, and for this connection set $S$, $x, y \in G$ are adjacent in $\cay(G, S)$ if and only if $x-y\in S$. Write
$$
S_0 = S\cup \{0\}
$$
and call it the \emph{extended connection set} of $\cay(G, S)$. It is readily seen that $\cay(G, S)$ is a regular graph with degree $|S|$, and $\cay(G, S)$ is connected if and only if $\langle S \rangle = G$, where for any subset $X \subseteq G$, $\langle X \rangle$ denotes the subgroup of $G$ generated by $X$. It can be verified that a subset $D$ of $G$ is a perfect code in $\cay(G, S)$ if and only if $D+g$ is a perfect code in $\cay(G, S)$ for every $g\in G$. Thus, in studying perfect codes in Cayley graphs of abelian groups, we may assume without loss of generality that $0$ is in the perfect code under consideration. Given a subgroup $H$ of $G$, let $\pi_H: G\to G/H, g \mapsto g+H$ denote the canonical projection. Slightly abusing notation, for $X \subseteq G$ we use $X/H$ to denote the image of $X$ under $\pi_H$. Any non-zero element $a \in G$ satisfying $X + a = X$ is called a \emph{period} of $X$. The periods of $X$ together with $0$ form a subgroup of $G$, denoted by $G_X$, which is called \cite{SS09} the \emph{subgroup of periods} of $X$. Note that $X$ is the union of some cosets of $G_X$ in $G$. If $G_X$ is nontrivial, then $X$ is \emph{periodic} (or \emph{$G_X$-periodic} in order to emphasize the subgroup $G_X$); otherwise, $X$ is \emph{aperiodic}.

\subsection{Main results}\label{sec:results}

As one can see above, even for circulant graphs (which are simplest Cayley graphs in a sense) we are far from a good understanding of perfect codes in them. In this paper we focus on existence, construction and enumeration of perfect codes in connected circulant graphs $\cay(\ZZZ_n,S)$ with $|S_0|$ a prime power. The following concept is vital for establishing our main results.

\begin{defn}
\label{def:pyra}
Let $G$ be an abelian group. A subset $X$ of $G$ with $0 \in X$ is called a \emph{pyramidal set} of $G$ if there exists a subgroup series
\begin{equation}
\label{eq:pyra}
H_0 < H_1 < \cdots < H_{2t-1} \le H_{2t} = G
\end{equation}
of $G$ with $t \ge 1$ such that either (a) or (b) below holds:
\begin{enumerate}[(a)]
\item $H_0 = \{0\}$, $X$ is aperiodic, and
\begin{enumerate}
\item[(T1)] $(X/H_{2i}-X/H_{2i})\cap (H_{2i+1}/H_{2i})=\{0\}$ for $0 \le i \le t-1$;
\item[(T2)] $X/H_{2i-1}$ is $(H_{2i}/H_{2i-1})$-periodic for $1 \le i \le t$;
\item[(T3)] $|X/H_{2t-1}| = |G/H_{2t-1}|$ and $|X/H_{2i}| = |X/H_{2i+1}|$ for $0 \le i  \le t-1$.
\end{enumerate}
\item $H_0 \ne \{0\}$, $X$ is $H_0$-periodic, and $X/H_0$ satisfies (T1), (T2) and (T3) in $G/H_0$ with respect to the subgroup series
$$
\{0\} = H_0/H_0 < H_1/H_0 < \cdots < H_{2t-1}/H_0 \le H_{2t}/H_0 = G/H_{0}
$$
of $G/H_0$.
\end{enumerate}
In both aperiodic and periodic cases, we call \eqref{eq:pyra} an \emph{admissible subgroup series} of $G$ with \emph{length} $2t$ associated with $X$.
\end{defn}

A pyramidal set is not necessarily an extended connection set. For example, $X=\{0,1,14,15,16,29,30,31,44\}$ is a pyramidal set of $\ZZZ_{315}$ with $H_0 < H_1 < H_2 < H_3 < H_4 = \ZZZ_{315}$ as an associated admissible subgroup series, where $H_0=\{0\}$, $H_1=\langle 45\rangle$, $H_2=\langle15\rangle$, $H_3=\langle 3\rangle$ and $H_4=\ZZZ_{315}$, but $X$ does not satisfy $-X = X$.

Condition (T2) above determines a pyramidal structure which gives the name to the set, as illustrated in Figure \ref{fig:pyramidal}. Condition (T3) is a rule for determining the length $2t$ of series \eqref{eq:pyra}. It may happen that there are multiple admissible subgroup series of $G$ associated with the same pyramidal set of $G$ (see Example \ref{ex:ex3.4}). However, as will be seen in Construction \ref{const:1} and Lemma \ref{lem:longest}, for any pyramidal set $X$ of the cyclic group $\ZZZ_n$, there is a unique longest admissible subgroup series of $\ZZZ_n$ associated with $X$.

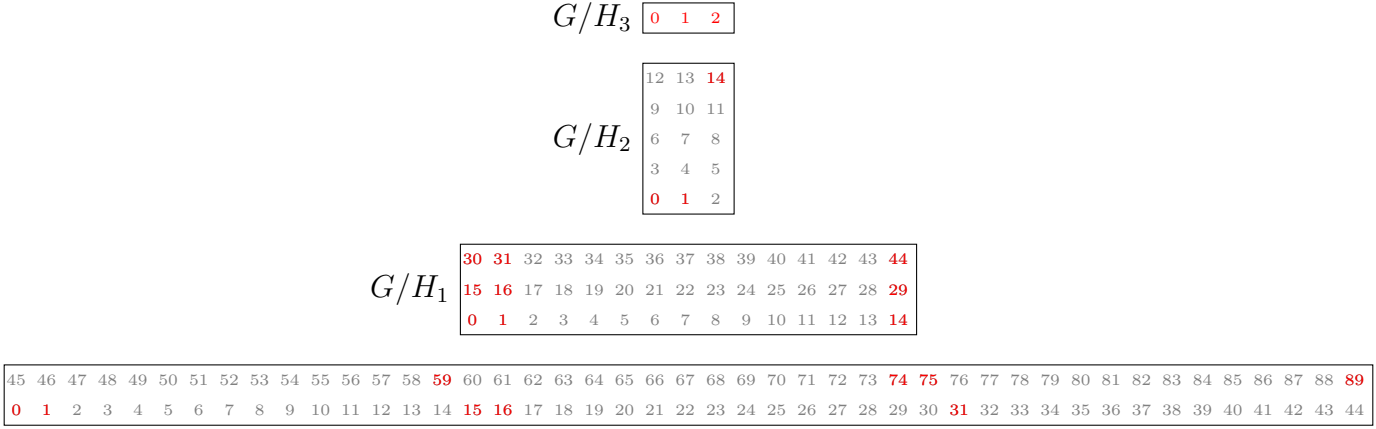
\begin{figure}
\begin{center}
\begin{tikzpicture}[scale=0.4]
\draw (-22.5,0) rectangle (22.5,2);
\foreach \i in {0,...,44}
{
\node[gray] at (\i+0.4-22.5,0.5) {\tiny\i};
}
\foreach \i in {45,...,89}
{
\node[gray] at (\i+0.4-45-22.5,1.5) {\tiny\i};
}
\node[red] at (0.4-22.5,0.5) {\tiny 0};
\node[red] at (0.4-22.5+1,0.5) {\tiny 1};
\node[red] at (0.4-22.5+15,0.5) {\tiny 15};
\node[red] at (0.4-22.5+16,0.5) {\tiny 16};
\node[red] at (0.4-22.5+31,0.5) {\tiny 31};
\node[red] at (0.4-22.5+14,1.5) {\tiny 59};
\node[red] at (0.4-22.5+29,1.5) {\tiny 74};
\node[red] at (0.4-22.5+30,1.5) {\tiny 75};
\node[red] at (0.4-22.5+44,1.5) {\tiny 89};

\draw (-7.5,3) rectangle (7.5,6);
\foreach \i in {0,...,14}
{
\node[gray] at (\i+0.4-7.5,3.5) {\tiny \i};
}
\foreach \i in {15,...,29}
{
\node[gray] at (\i+0.4-7.5-15,4.5) {\tiny \i};
}
\foreach \i in {30,...,44}
{
\node[gray] at (\i+0.4-7.5-30,5.5) {\tiny \i};
}
\node[red] at (0.4-7.5,3.5) {\tiny 0};
\node[red] at (0.4-7.5,4.5) {\tiny 15};
\node[red] at (0.4-7.5,5.5) {\tiny 30};

\node[red] at (0.4-6.5,3.5) {\tiny 1};
\node[red] at (0.4-6.5,4.5) {\tiny 16};
\node[red] at (0.4-6.5,5.5) {\tiny 31};

\node[red] at (0.4+6.5,3.5) {\tiny 14};
\node[red] at (0.4+6.5,4.5) {\tiny 29};
\node[red] at (0.4+6.5,5.5) {\tiny 44};

\node[left] at (-7.5,4.5) {$G/H_1$};
\draw (-1.5, 7) rectangle (1.5,12);
\foreach \i in {0,1,2}
\node[gray] at (\i+0.4-1.5, 7.5) {\tiny \i};
\foreach \i in {3,4,5}
\node[gray] at (\i+0.4-1.5-3, 8.5) {\tiny \i};
\foreach \i in {6,7,8}
\node[gray] at (\i+0.4-1.5-6, 9.5) {\tiny \i};
\foreach \i in {9,10,11}
\node[gray] at (\i+0.4-1.5-9, 10.5) {\tiny \i};
\foreach \i in {12,13,14}
\node[gray] at (\i+0.4-1.5-12, 11.5) {\tiny \i};

\node[red] at (0.4-1.5,7.5) {\tiny 0};
\node[red] at (0.4-1.5+1,7.5) {\tiny 1};
\node[red] at (0.4-1.5+2,11.5) {\tiny 14};

\node[left] at (-1.5,9.5) {$G/H_2$};
\draw (-1.5,13) rectangle (1.5,14);
\foreach \i in {0,1,2}
\node[red] at (\i+0.4-1.5,13.5) {\tiny \i};
\node[left] at (-1.5,13.5) {$G/H_3$};
\end{tikzpicture}
\end{center}
\caption{In $G=\ZZZ_{90}$, $S_0=\{0,1,15,16,31,59,74,75,89\}$ is a pyramidal set with the longest admissible subgroup series $H_1=\langle 45\rangle, H_2=\langle 15\rangle, H_3=\langle 3 \rangle$. For $i = 1, 2, 3$, the members of $G/H_i$ in red colour form $S_0/H_i$. This diagram looks like a pyramid, hence the eponym.}
\label{fig:pyramidal}
\end{figure}

Our first main result, stated below, gives a necessary and sufficient condition for a connected circulant graph of degree $p^{\ell}-1$ to admit perfect codes. Moreover, it tells us when a perfect code in such a graph with an aperiodic extended connection set can be a subgroup perfect code.

\begin{theorem}\label{th:thplp}
Let $n \ge 3$ be an integer, and let $\cay(\ZZZ_n, S)$ be a connected non-complete circulant graph with order $n$ and degree $p^{\ell}-1$, where $p$ is a prime and $\ell \ge 1$ is an integer. Then $\cay(\ZZZ_n, S)$ admits a perfect code if and only if $p^{\ell}$ divides $n$ and $S_0$ is a pyramidal set of $\ZZZ_n$.

Moreover, if $p^{\ell}$ divides $n$, $S_0$ is pyramidal and aperiodic, and the longest admissible subgroup series associated with $S_0$ has length $2t$, then the following hold:
\begin{itemize}
\item[(a)] if $(S_0-S_0)\cap \langle p^\ell \rangle=\{0\}$, then the subgroup $\langle p^\ell \rangle$ of $\ZZZ_n$ is a perfect code in $\Gamma$ containing $0$; furthermore, when $t=1$ this is the only perfect code in $\Gamma$ containing $0$, whilst when $t > 1$ there are perfect codes in $\Gamma$ containing $0$ which are not subgroups of $\ZZZ_n$;
\item[(b)] if $(S_0-S_0)\cap \langle p^\ell \rangle\neq\{0\}$, then there are perfect codes in $\Gamma$ containing $0$, but none of them is the subgroup $\langle p^\ell \rangle$ of $\ZZZ_n$.
\end{itemize}
\end{theorem}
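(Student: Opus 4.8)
The plan is to work throughout with the tiling reformulation already recorded in the introduction: a subset $D$ with $0\in D$ is a perfect code in $\Gamma=\cay(\ZZZ_n,S)$ exactly when $(S_0,D)$ is a tiling of $\ZZZ_n$, i.e. every element is uniquely a sum $a+b$ with $a\in S_0$ and $b\in D$. Since $\Gamma$ is non-complete of degree $p^{\ell}-1$ we have $|S_0|=p^{\ell}<n$, so any tiling forces $|S_0|\cdot|D|=n$ and hence $p^{\ell}\mid n$; this already gives the necessity of $p^{\ell}\mid n$. Next I would isolate $K=\langle p^{\ell}\rangle$, the unique subgroup of $\ZZZ_n$ of order $n/p^{\ell}=|D|$. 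Because $|S_0|=[\ZZZ_n:K]$, the pair $(S_0,K)$ tiles $\ZZZ_n$ iff $S_0$ is a transversal of $K$, iff $S_0$ injects into $\ZZZ_n/K$, iff $(S_0-S_0)\cap\langle p^{\ell}\rangle=\{0\}$. This single observation settles the subgroup part of the moreover statement: in case (a) the subgroup $\langle p^{\ell}\rangle$ is a perfect code containing $0$, and in case (b) it is not. The existence of some perfect code in case (b) then comes for free from the main equivalence, since $S_0$ is assumed pyramidal and $p^{\ell}\mid n$.

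For the main equivalence I would prove ``$S_0$ pyramidal $\Rightarrow$ a complement exists'' by invoking Construction~\ref{const:1}, which assembles a tiling complement $D$ from an admissible series associated with $S_0$, and the converse ``a complement exists $\Rightarrow$ $S_0$ pyramidal'' by induction on $n$, peeling the subgroup series from the bottom. Given a tiling $(S_0,D)$, I would set $H_0=G_{S_0}$ (trivial in the aperiodic case, matching Definition~\ref{def:pyra}(a), nontrivial in the periodic case, matching (b)); then choose $H_1$ to record the first packing layer, characterised by $(S_0/H_0-S_0/H_0)\cap(H_1/H_0)=\{0\}$, and $H_2$ to record the first periodicity layer along which $S_0/H_1$ fills complete cosets, i.e. is $(H_2/H_1)$-periodic. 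I would then pass to $\ZZZ_n/H_2$, verify that the images of $S_0$ and $D$ still tile it with a tile of prime-power size, and recurse; the size bookkeeping produces (T3) and termination gives the top relation $|S_0/H_{2t-1}|=|G/H_{2t-1}|$. The main obstacle is exactly this reduction step: one must show that at every stage the relevant packing and periodicity subgroups exist and that the quotiented tile is again a pyramidal prime-power tile. This is where the hypothesis that $|S_0|$ is a prime power is essential, since it is what forces the clean alternation between packing and periodicity (a H\'ajos/R\'edei-type phenomenon for $p$-power tiles) rather than more chaotic behaviour.

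Finally, for the refined part of (a) I would use the parametrisation of perfect codes furnished by Construction~\ref{const:1} together with the uniqueness of the longest admissible series (Lemma~\ref{lem:longest}). In case (a) the length-$2$ series $\{0\}<H_1\le\ZZZ_n$ forces, via (T1)--(T3), the tile $S_0$ to biject onto $\ZZZ_n/H_1$ with $|H_1|=n/p^{\ell}$, so $H_1=\langle p^{\ell}\rangle$; when $t=1$ there is no intermediate periodicity layer at which the construction can make a nontrivial transversal choice, and I would show this rigidity forces $\langle p^{\ell}\rangle$ to be the only perfect code containing $0$. When $t>1$, by contrast, at least one intermediate layer $H_{2i-1}<H_{2i}$ offers a genuine transversal choice; selecting a nontrivial one in Construction~\ref{const:1} yields a perfect code $D\ni 0$ with $D\neq\langle p^{\ell}\rangle$, and the remaining step is to certify that this $D$ is not a subgroup of $\ZZZ_n$ at all. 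I expect this two-sided claim---rigidity when $t=1$ and the manufacture of genuinely non-subgroup codes when $t>1$---to be the main obstacle of the moreover part, as it is precisely the mechanism producing the non-equally-spaced codes underlying Example~\ref{ex:ex3.4}.
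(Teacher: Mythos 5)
Your tiling reformulation, the derivation of $p^{\ell}\mid n$, and the treatment of the subgroup $\langle p^{\ell}\rangle$ via the transversal condition $(S_0-S_0)\cap\langle p^{\ell}\rangle=\{0\}$ all match the paper (the last point is exactly Theorem~\ref{le:leobr07}). Your necessity argument is also essentially the paper's: the paper inducts on $\ell$ rather than on $n$, but the engine is identical, namely Lemma~\ref{thm:primepower} (Sands' factorization theorem), which forces the alternation between packing layers $(S_0/H_{2i}-S_0/H_{2i})\cap(H_{2i+1}/H_{2i})=\{0\}$ and periodicity layers, precisely the prime-power phenomenon you single out as the crux.

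The genuine gap is in your sufficiency direction and in your proof of the refined part of (a): both are circular as proposed. You invoke Construction~\ref{const:1} and Lemma~\ref{lem:longest}, but in the paper these are logically downstream of Theorem~\ref{th:thplp}; the very first step of Construction~\ref{const:1} cites Theorem~\ref{th:thplp} to produce a perfect code $D$, and then extracts the subgroup series from the periodicity structure of $D$. In other words, Construction~\ref{const:1} builds the series \emph{from} a code, the opposite of what you describe (building a code from the series is Construction~\ref{const:DD}, which itself presupposes Construction~\ref{const:1}). To close the loop you must assemble the complement directly from an arbitrary admissible series, which is what the paper's sufficiency proof does: starting from the top, where (T3) makes $\cay(\ZZZ_n/H_{2t-1},(S_0/H_{2t-1})\setminus\{H_{2t-1}\})$ complete, one alternately uses (T1) together with Lemma~\ref{le:lesdtgd} to lift a direct sum through a packing layer, and (T2) together with the wreath-product structure (Lemma~\ref{le:wreath}, Theorem~\ref{le:legraphiso}) to extend the code through a periodicity layer, ending with $\ZZZ_n=S_0\oplus(D'+H_1)$. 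The same machinery, applied to the quotient $\cay(\ZZZ_n/H_1,(S_0/H_1)\setminus\{H_1\})$, produces the non-subgroup codes needed in (a) for $t>1$. Finally, a point you flag as a main obstacle is actually immediate: since $\ZZZ_n$ is cyclic, it has a unique subgroup of order $n/p^{\ell}$, so \emph{any} perfect code containing $0$ other than $\langle p^{\ell}\rangle$ is automatically not a subgroup; no separate certification is needed.
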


As will be seen in Lemma \ref{lem:iota}, if $S_0$ is pyramidal and aperiodic, then every admissible subgroup series $\iota: H_0<H_1<\cdots<H_{2t-1} \le H_{2t} = \ZZZ_n$ associated with $S_0$ gives rise to two sequences of positive integers (see \eqref{eq:lm}), namely $\mathbf{l}_{\iota} = (\ell_1, \ell_2, \ldots, \ell_{t})$ and $\mathbf{m}_{\iota} = (m_1, m_2, \ldots, m_{t})$, such that $\sum_{i=1}^t \ell_i =\ell$, $\prod_{i=1}^t m_i$ divides $m$, $|H_{2i}/H_{2i-1}|=p^{\ell_i}$ and $|H_{2i-1}/H_{2i-2}|=m_i$ for $1 \leq i \leq t$, and $\cay(\ZZZ_n/H_{2t-1}, (S_0/H_{2t-1})\setminus \{H_{2t-1}\}) \cong K_{p^r}$, where $r = \ell_t$. We will see in Construction \ref{const:2} that $\Ga$ can be reduced to $K_{p^r}$ through a sequence of projections. Conversely, for any sequences $\mathbf{l} = (\ell_1, \ell_2, \ldots, \ell_t), \mathbf{m} = (m_1, m_2, \ldots, m_{t})$ of positive integers such that $\sum_{i=1}^t \ell_i =\ell$ and $\prod_{i=1}^{t}m_i = m$, we will construct a family of connected circulant graphs with order $n$ and degree $p^\ell - 1$ through $2t+1$ lifts of $K_{p^r}$ (see Construction \ref{const:3}), where $r = \ell_t$. Denote this family of circulant graphs by $\GG_{\mathbf{l}, \mathbf{m}}(n, p^\ell)$. Our second main result, Theorem \ref{th:th2.2.1} below, asserts that this construction produces all circulant graphs of degree $p^\ell - 1$ which admit perfect codes.

\begin{theorem}
\label{th:th2.2.1}
Let $n=p^\ell m$, where $p$ is a prime and $\ell\geq 1, m\geq 2$ are integers. Then a connected circulant graph with order $n$ and degree $p^\ell - 1$ admits a perfect code if and only if it is isomorphic to a graph in $\GG_{\mathbf{l}, \mathbf{m}}(n, p^\ell)$ for some sequences $\mathbf{l} = (\ell_1, \ell_2, \ldots, \ell_t), \mathbf{m} = (m_1, m_2, \ldots, m_t)$ of positive integers satisfying $\sum_{i=1}^t \ell_i = \ell$ and $\prod_{i=1}^{t}m_i = m$.
\end{theorem}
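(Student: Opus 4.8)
The plan is to prove the two directions of the equivalence separately, using Theorem~\ref{th:thplp} to pass between the combinatorial statement ``$\Gamma$ admits a perfect code'' and the structural statement ``$S_0$ is a pyramidal set'', and using Constructions~\ref{const:2} and~\ref{const:3} together with Lemmas~\ref{lem:iota} and~\ref{lem:longest} to pass between ``$S_0$ is pyramidal'' and ``$\Gamma \in \GG_{\mathbf{l},\mathbf{m}}(n,p^\ell)$''.

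\textbf{Sufficiency.} Since admitting a perfect code is invariant under graph isomorphism, it suffices to show that every $\Gamma' \in \GG_{\mathbf{l},\mathbf{m}}(n,p^\ell)$ admits a perfect code. By the design of Construction~\ref{const:3}, the extended connection set $S_0'$ of $\Gamma'$ is a pyramidal set of $\ZZZ_n$, the lifts furnishing an admissible subgroup series whose jump and plateau indices are prescribed by $\mathbf{l}$ and $\mathbf{m}$. As $n = p^\ell m$ forces $p^\ell \mid n$, Theorem~\ref{th:thplp} applies and produces a perfect code in $\Gamma'$; transporting it along the isomorphism $\Gamma \cong \Gamma'$ yields one in $\Gamma$.

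\textbf{Necessity.} Suppose $\Gamma = \cay(\ZZZ_n,S)$ admits a perfect code. Theorem~\ref{th:thplp} gives $p^\ell \mid n$ (consistent with $n = p^\ell m$) and that $S_0$ is pyramidal. I would first treat the aperiodic case. By Lemma~\ref{lem:longest} there is a canonical longest admissible series $\iota\colon \{0\} = H_0 < H_1 < \cdots < H_{2t-1} \le H_{2t} = \ZZZ_n$ for $S_0$, and Lemma~\ref{lem:iota} extracts $\mathbf{l}_\iota = (\ell_1,\dots,\ell_t)$ and $\mathbf{m}_\iota = (m_1,\dots,m_t)$ with $|H_{2i}/H_{2i-1}| = p^{\ell_i}$, $|H_{2i-1}/H_{2i-2}| = m_i$ and $\sum_i \ell_i = \ell$. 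Multiplying the successive quotient orders along $\iota$ gives $n = p^{\sum_i \ell_i}\prod_i m_i = p^\ell \prod_i m_i$, whence $\prod_i m_i = m$, so $\mathbf{l}_\iota,\mathbf{m}_\iota$ are admissible parameters. The projections of Construction~\ref{const:2} reduce $\Gamma$ to $K_{p^r}$ with $r = \ell_t$, and since the lifts of Construction~\ref{const:3} invert these projections, reversing the reduction exhibits $\Gamma$, up to a unit multiplier of $\ZZZ_n$, as a member of $\GG_{\mathbf{l}_\iota,\mathbf{m}_\iota}(n,p^\ell)$. For the periodic case, $S_0$ is $H_0$-periodic with $H_0 \neq \{0\}$; since $S_0$ is a union of $H_0$-cosets of total size $|S_0| = p^\ell$, the group $H_0$ has order a power of $p$, say $p^{\ell_0}$, and $S_0/H_0$ is an aperiodic pyramidal set of $G/H_0 \cong \ZZZ_{n/p^{\ell_0}}$. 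Applying the aperiodic analysis to the quotient graph yields parameters summing to $\ell - \ell_0$ and multiplying to $m$; the $H_0$-periodicity means $\Gamma$ is the blow-up of this quotient graph by $K_{p^{\ell_0}}$, which corresponds to the bottom lift of Construction~\ref{const:3}, and restoring the jump $\ell_0$ recovers full parameters with $\sum_i \ell_i = \ell$ and $\prod_i m_i = m$, placing $\Gamma$ in $\GG_{\mathbf{l},\mathbf{m}}(n,p^\ell)$.

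\textbf{Main obstacle.} The crux is the necessity direction, specifically verifying that the lifts of Construction~\ref{const:3} genuinely invert the projections of Construction~\ref{const:2} at the level of isomorphism classes: one must check that reversing the canonical reduction of $\Gamma$ to $K_{p^r}$ lands exactly in $\GG_{\mathbf{l}_\iota,\mathbf{m}_\iota}(n,p^\ell)$ and that the only residual indeterminacy is a relabelling realized by an automorphism of $\ZZZ_n$. The second delicate point is folding the periodic case cleanly into the aperiodic one---matching the $H_0$-blow-up with the corresponding bottom lift and confirming that the recombined parameters satisfy the exact conditions $\sum_i \ell_i = \ell$ and $\prod_i m_i = m$ demanded by the statement.
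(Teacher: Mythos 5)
Your route is genuinely different from the paper's: the paper proves this theorem by induction on $\ell$, using Lemma~\ref{thm:primepower} (Szab\'o--Sands) to find a periodic layer to peel off in the necessity direction, and lifting perfect codes directly up the tower of Construction~\ref{const:3} (via Lemmas~\ref{le:ledeng1424} and~\ref{le:lesdtgd}) in the sufficiency direction; pyramidality and Theorem~\ref{th:thplp} are never invoked there. Your plan instead passes both directions through Theorem~\ref{th:thplp}. That could in principle work, but your sufficiency direction rests on the assertion that ``by the design of Construction~\ref{const:3}'' the extended connection set of any graph in $\GG_{\mathbf{l},\mathbf{m}}(n,p^\ell)$ is pyramidal, with the lifts furnishing an admissible subgroup series prescribed by $\mathbf{l}$ and $\mathbf{m}$ --- and that assertion is false as stated. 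Take $p=3$, $\ell=2$, $m=2$, $n=18$, $\mathbf{l}=(2)$, $\mathbf{m}=(2)$, and $S_0=\{0,1,5,6,7,11,12,13,17\}$: this is $T(\sigma)_0$ for the feasible map $\sigma$ sending $1,5,6,7$ to $0$ and $2,3,4,8$ to $1$, and $\cay(\ZZZ_{18},S)$ is connected (as $1\in S$), so it lies in $\GG_{(2),(2)}(18,3^2)$. But $S_0+6=S_0$, so $S_0$ is $\langle 6\rangle$-periodic, and hence the series $\{0\}<\langle 9\rangle\le\ZZZ_{18}$ that the lift ``prescribes'' is not admissible for $S_0$: case (a) of Definition~\ref{def:pyra} requires $S_0$ to be aperiodic, while case (b) requires $H_0\neq\{0\}$. (This $S_0$ is in fact pyramidal, but only through the unrelated case-(b) series $\langle 6\rangle<\langle 3\rangle\le\ZZZ_{18}$.)

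Consequently, pyramidality of the outputs of Construction~\ref{const:3} is a statement that needs its own proof --- essentially the converse of Lemma~\ref{lem:iota} --- and that proof must locate the subgroup of periods of the constructed set and re-index the series according to the aperiodic/periodic dichotomy of Definition~\ref{def:pyra}; this is comparable in effort to the paper's own induction, and you cannot shortcut it by quoting Theorem~\ref{th:th2.2.1} or its consequences without circularity. The cleanest repair is to drop pyramidality in this direction and argue as the paper does: by induction on $\ell$, a perfect code in the graph one level down survives the $\bag_{p^{\ell_1}}$ lift and then lifts through $\baf_{m_1}$ using Lemmas~\ref{le:ledeng1424} and~\ref{le:lesdtgd}. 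Your necessity direction is closer to workable: in the aperiodic case Lemma~\ref{lem:iota} indeed gives $\sum_i\ell_i=\ell$, your quotient-order count correctly yields $\prod_i m_i=m$, and Remark~\ref{rem:fgmbar}(i) together with Lemma~\ref{pr:pr4.2}(b) supplies the single-step inversions $\Gamma\in\baf_{m_i}(f_{m_i}(\Gamma))$ and $\Gamma\cong\bag_{p^{\ell_i}}(g_{p^{\ell_i}}(\Gamma))$. However, the two points you yourself flag as the ``main obstacle'' --- composing these inversions across the whole tower up to the identifications $\ZZZ_n/\langle d\rangle\cong\ZZZ_{n/d}$, and matching the periodic case with an $m_1=1$ lift --- are precisely the content of the proof and are left unproved, so as it stands the proposal is an outline whose central sufficiency claim is unjustified and, in the form stated, incorrect.
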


It is natural to ask how many distinct perfect codes containing $0$ a connected circulant graph satisfying the condition in Theorem \ref{th:thplp} may have. Our third main result, Theorem \ref{cor:cor2.2} below, provides a lower bound on this number. This bound is given in terms of two sequences of integers obtained from the longest admissible subgroup series of $\ZZZ_n$ associated with the pyramidal set $S_0$, namely $\mathbf{h}(S_0)$ and $\mathbf{k}(S_0)$ (see \eqref{eq:hk}) to be given in Construction \ref{const:1}.

\begin{theorem}
\label{cor:cor2.2}
Let $n=p^\ell m$, and let $\cay(\ZZZ_n,S)$ be a connected circulant graph with order $n$ and degree $p^\ell-1$, where $p$ is a prime and $\ell\geq 1, m\geq 2$ are integers. Assume that $S_0$ is pyramidal with associated sequences $\mathbf{h}(S_0) = (h_0, h_1, \ldots, h_{t-1}, h_t)$ and $\mathbf{k}(S_0) = (k_0, k_1, \ldots, k_{t-1}, k_t)$. Then the number $c(S)$ of distinct perfect codes containing $0$ in $\cay(\ZZZ_n,S)$ satisfies
$$
c(S)\ge p^{\left(\sum_{i=0}^{t-1}(h_{i-1}-h_{i})k_{i}\right)+h_{t-1}-\ell},
$$
where $h_{-1} = \ell$.
\end{theorem}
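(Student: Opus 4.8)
The plan is to build perfect codes containing $0$ one quotient at a time along the longest admissible subgroup series $H_0<H_1<\cdots<H_{2t-1}\le H_{2t}=\ZZZ_n$ associated with $S_0$, exploiting the tiling reformulation from the introduction: $D\ni 0$ is a perfect code in $\cay(\ZZZ_n,S)$ if and only if $(S_0,D)$ tiles $\ZZZ_n$. By Construction~\ref{const:2} the graph collapses, through the projections $\pi_{H_j}$, to $K_{p^r}$ at the top quotient $\ZZZ_n/H_{2t-1}$ (with $r=\ell_t$), whose only perfect code containing $0$ is $\{H_{2t-1}\}$. Starting from the trivial base $\ZZZ_n/H_{2t}=\{0\}$ I would lift the code successively through $\ZZZ_n/H_{2t-1},\ZZZ_n/H_{2t-2},\dots,\ZZZ_n/H_0$, and, in the periodic case, finally to $\ZZZ_n$ itself, counting at each stage the number of admissible lifts. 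The point is that the tiling $(S_0/H_j,D/H_j)$ of $\ZZZ_n/H_j$ determines, and is determined by, tilings one level coarser, and conditions \textup{(T1)} and \textup{(T2)} of Definition~\ref{def:pyra} dictate exactly how much freedom each lift carries.

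Two lifting lemmas form the technical core. First, a \emph{periodic lift}: if $A\ni 0$ is $P$-periodic in a finite abelian group $K$ and $(A,B)$ tiles $K$ with $0\in B$, then $B$ meets each coset of $P$ at most once, $(\pi_P(A),\pi_P(B))$ tiles $K/P$, and conversely \emph{every} choice of one $P$-coset representative for each element of $\pi_P(B)$ (taking $0$ for the trivial coset) returns a tiling of $K$; hence there are exactly $|P|^{\,|\pi_P(B)|-1}$ such lifts containing $0$. Applied with $K=\ZZZ_n/H_{2i-1}$ and $P=H_{2i}/H_{2i-1}$, condition \textup{(T2)} makes $A=S_0/H_{2i-1}$ periodic, so the lift from $\ZZZ_n/H_{2i}$ to $\ZZZ_n/H_{2i-1}$ carries a factor $|H_{2i}/H_{2i-1}|^{\,|D/H_{2i}|-1}$. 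Second, an \emph{injective lift}: if $A\ni 0$ satisfies $(A-A)\cap P=\{0\}$ and $(\pi_P(A),\bar B)$ tiles $K/P$, then the full preimage $(A,\pi_P^{-1}(\bar B))$ tiles $K$. Applied with $K=\ZZZ_n/H_{2i}$ and $P=H_{2i+1}/H_{2i}$, condition \textup{(T1)} guarantees the hypothesis, so I would use this single distinguished full-preimage lift at every even (injective) step.

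Carrying out the alternation from the base downward, the periodic steps lifting $\ZZZ_n/H_{2i}$ to $\ZZZ_n/H_{2i-1}$ for $i=t,t-1,\dots,1$ — together with, when $H_0\ne\{0\}$, the final $H_0$-periodic lift of $\ZZZ_n/H_0$ to $\ZZZ_n$ — carry the free choices, whereas the injective steps are rigid once the full preimage is fixed. Since $|D/H_{2i}|$ is forced by the indices $|H_j/H_{j-1}|$ alone and does not depend on earlier choices, and since distinct representative choices manifestly yield distinct lifts (the choices can be read back off the code by projecting), the composite map from choice-tuples to codes is injective and the per-step counts multiply. Writing $H_{-1}=\{0\}$, the construction therefore produces at least $\prod_{i=0}^{t}|H_{2i}/H_{2i-1}|^{\,|D/H_{2i}|-1}$ distinct perfect codes containing $0$. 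To reach the stated form, I would invoke Lemma~\ref{lem:iota} and the definitions in Construction~\ref{const:1}, giving $|H_{2i}/H_{2i-1}|=p^{\,h_{i-1}-h_i}$ (with the convention $h_{-1}=\ell$) and $|D/H_{2i}|=|\ZZZ_n/H_{2i}|/|S_0/H_{2i}|=k_i$; thus the exponent is $\sum_{i=0}^{t}(h_{i-1}-h_i)(k_i-1)$. Telescoping $\sum_{i=0}^{t}(h_{i-1}-h_i)=h_{-1}-h_t=\ell$ and using $k_t=1$, $h_t=0$ turns this into $\sum_{i=0}^{t-1}(h_{i-1}-h_i)k_i+h_{t-1}-\ell$, which is exactly the claimed bound for $c(S)$.

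The main obstacle is the rigorous proof of the two lifting lemmas, especially the injective one: one must verify that $\pi_P^{-1}(\bar B)$ tiles $K$ whenever $(A-A)\cap P=\{0\}$ and $(\pi_P(A),\bar B)$ tiles $K/P$. This can be settled either by a short Fourier argument — $1_A * 1_B = 1_K$ is equivalent to $\widehat{1_A}\,\widehat{1_B}$ vanishing off the trivial character — or by a direct count using $|A|\,|\pi_P^{-1}(\bar B)|=|K|$ together with the injectivity of $\pi_P$ on $A$. A secondary subtlety is that an injective step may in fact admit lifts beyond the full preimage; since Theorem~\ref{cor:cor2.2} only asserts a lower bound, discarding those extra codes is harmless, and this is precisely why the inequality need not be tight. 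Finally I would double-check the boundary bookkeeping — the base $K_{p^r}$, the periodic-case bottom step, and the convention $h_{-1}=\ell$ — so that the trailing term $h_{t-1}-\ell$ exactly collects the $-1$'s contributed by the exponents $|D/H_{2i}|-1$ across all periodic steps.
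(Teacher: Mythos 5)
Your proposal is correct and follows essentially the same route as the paper: your periodic lift is Lemma \ref{le:ledeng1424} together with the wreath-product discussion after Lemma \ref{le:legraphiso}, your injective (full-preimage) lift is Lemma \ref{le:lesdtgd}, your choice-tuple construction of codes is Construction \ref{const:DD}, and your injectivity-plus-multiplication of per-step counts is Lemma \ref{co:co3.2} combined with the counting in the paper's proof of Theorem \ref{cor:cor2.2}. The only cosmetic differences are that you multiply the per-step factors $|H_{2i}/H_{2i-1}|^{k_i-1}$ directly (recovering injectivity by projecting the final code back through the series) where the paper organizes the same count as an induction on $t$, and your exponent $\sum_{i=0}^{t}(h_{i-1}-h_i)(k_i-1)$ telescopes to the stated bound exactly as required.
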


\subsection{Organisation of the paper}

The rest of the paper is organized as follows. As preparations for the proof of our main results, in the next section we present several known results and prove two lemmas on admissible subgroup series associated with a pyramidal set. Theorem \ref{th:thplp} will be proved in Section \ref{sec:char}. In Section \ref{sec:const}, we give two constructions (Constructions \ref{const:2} and \ref{const:3}), hence defining the family $\GG_{\mathbf{l}, \mathbf{m}}(n, p^\ell)$, prove several lemmas around them, and use these lemmas to prove Theorem \ref{th:th2.2.1}. In Section \ref{sec:enum}, we first construct a specific admissible subgroup series associated with a pyramidal extended connection set $S_0$ of $\ZZZ_n$ (where $n = p^l m$) with $|S_0| = p^l$ together with the integer sequences $\mathbf{h}(S_0)$ and $\mathbf{k}(S_0)$ (Construction \ref{const:1}), and then prove that this series is the unique longest admissible subgroup series of $\ZZZ_n$ associated with $S_0$ (Lemma \ref{lem:longest}). Based on these we give a method for constructing various perfect codes in $\cay(\ZZZ_n, S)$ containing $0$ (Construction \ref{const:DD}) and use this construction to prove Theorem \ref{cor:cor2.2}. The paper concludes with some final comments and remarks in Section \ref{sec:final}.


\section{Preliminaries}

In our proof of Theorem \ref{th:thplp} we will use the following result.

\begin{theorem}
\label{le:leobr07}
(\cite[Theorem 3.1]{deng14}) Let $n \ge 3$ be an integer, and let $S$ be a connection set of $\ZZZ_n$ such that $|S_0|$ divides $n$. Then $\cay(\ZZZ_n,S)$ admits a perfect code $H$ which is a subgroup of $\ZZZ_n$ if and only if $(S_0-S_0)\cap H = \{0\}$. Moreover, under this condition $H$ must be the subgroup $\langle |S_0| \rangle$ of $\ZZZ_n$ generated by $|S_0|$.
\end{theorem}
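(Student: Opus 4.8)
The plan is to reformulate the perfect-code condition as a tiling (exact factorisation) of $\ZZZ_n$ and then exploit the fact that a cyclic group has a unique subgroup of each order dividing $n$. Recall that a subset $D$ of $\ZZZ_n$ containing $0$ is a perfect code in $\cay(\ZZZ_n, S)$ precisely when the closed neighbourhoods $\{S_0 + d : d \in D\}$ partition $\ZZZ_n$; equivalently, the addition map $\mu \colon S_0 \times D \to \ZZZ_n$, $(a, d) \mapsto a + d$, is a bijection. Since every subgroup of $\ZZZ_n$ contains $0$, I would apply this reformulation with $D = H$ throughout, so that ``$H$ is a perfect code'' becomes ``$\mu$ is a bijection''.

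First I would record the two ways $\mu$ can fail to be a bijection. Injectivity fails exactly when $a + h = a' + h'$ for some $(a,h) \neq (a',h')$ with $a, a' \in S_0$ and $h, h' \in H$; rewriting this as $a - a' = h' - h \in (S_0 - S_0) \cap H$ shows that $\mu$ is injective if and only if $(S_0 - S_0) \cap H = \{0\}$. Surjectivity, given injectivity, is a pure cardinality count: the image has size $|S_0| \cdot |H|$, so $\mu$ is onto if and only if $|S_0| \cdot |H| = n$, that is, $|H| = n/|S_0|$. Hence $H$ is a perfect code if and only if both $(S_0 - S_0) \cap H = \{0\}$ and $|H| = n/|S_0|$ hold.

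For the forward direction, suppose $\cay(\ZZZ_n,S)$ has a subgroup perfect code $K$. Counting the parts of the partition $\{S_0 + k : k \in K\}$, each of size $|S_0|$, gives $|K| \cdot |S_0| = n$, hence $|K| = n/|S_0|$. Since $\ZZZ_n$ is cyclic and $|S_0|$ divides $n$, the subgroup $\langle |S_0| \rangle$ has order $n/|S_0|$ and is the unique subgroup of that order, forcing $K = \langle |S_0| \rangle =: H$ and simultaneously delivering the ``moreover'' clause; injectivity of the corresponding $\mu$ then yields $(S_0 - S_0) \cap H = \{0\}$. Conversely, assuming $(S_0 - S_0) \cap H = \{0\}$ for $H = \langle |S_0| \rangle$, the hypothesis $|S_0| \mid n$ gives $|H| = n/|S_0|$, so the cardinality condition $|S_0| \cdot |H| = n$ holds automatically; combined with the injectivity supplied by the intersection condition, $\mu$ is a bijection and $H$ is a perfect code.

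There is no serious obstacle here: the entire content lies in choosing the tiling reformulation, after which the statement splits cleanly into an injectivity condition (the intersection hypothesis) and a cardinality condition (automatic from $|S_0| \mid n$). The one place where the cyclic hypothesis is genuinely used is the ``moreover'' uniqueness clause, which relies on $\ZZZ_n$ having exactly one subgroup of each order dividing $n$; in a general abelian group the same argument would show only that a subgroup perfect code must have order $n/|S_0|$, without pinning it down uniquely.
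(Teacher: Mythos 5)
Your proof is correct. A point of comparison worth making explicit: the paper does not prove this theorem at all --- it is imported from \cite[Theorem 3.1]{deng14} and used as a black box --- so there is no in-paper proof to measure your argument against; yours is a legitimate self-contained derivation. It also aligns exactly with the toolkit the paper sets up elsewhere: the reduction of ``$D$ is a perfect code'' to the factorization $\ZZZ_n = S_0 \oplus D$ appears in the paper's preliminaries, and your injectivity/surjectivity analysis of the addition map $\mu$ (injective iff $(S_0-S_0)\cap H=\{0\}$; bijective iff additionally $|S_0|\,|H|=n$) is precisely Lemma~\ref{le:ledirectsum}(c) applied with $A=S_0$, $B=H$, using $H-H=H$ for a subgroup. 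Your final remark is also the right one to make, because the theorem as stated is slightly circular (``admits a perfect code $H$ \dots\ iff $(S_0-S_0)\cap H=\{0\}$'', with $H$ appearing on both sides): the intersection condition alone is not sufficient --- $H=\{0\}$ satisfies it for any $S$ yet is a perfect code only in a complete graph --- so the honest content is the conjunction of the intersection condition with the cardinality condition $|H|=n/|S_0|$, and it is the uniqueness of subgroups of each given order in the cyclic group $\ZZZ_n$ that collapses the latter into the ``moreover'' clause $H=\langle |S_0|\rangle$. This is exactly how the theorem is invoked later in the paper (e.g.\ in the proof of Theorem~\ref{th:thplp}), so your reading and your proof are the intended ones.
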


In our proof of Theorems \ref{th:thplp} and \ref{th:th2.2.1} we will use the following result.

\begin{theorem}
\label{le:ledeng17p}
(\cite[Theorem 3.1]{deng17} and \cite[Theorem 1.1]{fhz17})
Let $n \ge 3$ be an integer, and let $\cay(\ZZZ_n,S)$ be a connected non-complete circulant graph with order $n$ and degree $|S| = p-1$, where $p$ is a prime. Then $\cay(\ZZZ_n,S)$ admits a perfect code if and only if $p$ divides $n$ and $(S_0-S_0)\cap H = \{0\}$, where $H = \langle p\rangle$ is the subgroup of $\ZZZ_n$ generated by $|S_0|$.
\end{theorem}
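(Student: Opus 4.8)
The plan is to work throughout with the tiling reformulation from the introduction: a subset $D$ of $\ZZZ_n$ containing $0$ is a perfect code in $\cay(\ZZZ_n,S)$ if and only if $(S_0,D)$ is a tiling of $\ZZZ_n$, i.e. $S_0\oplus D=\ZZZ_n$. Since the degree is $|S|=p-1$, we have $|S_0|=p$. Sufficiency is then immediate from Theorem~\ref{le:leobr07}: if $p\mid n$ and $(S_0-S_0)\cap\langle p\rangle=\{0\}$, then $|S_0|=p$ divides $n$ and that theorem gives that $\langle p\rangle=\langle|S_0|\rangle$ is a perfect code. For necessity, assume $\Gamma$ has a perfect code; translating, we may take it to contain $0$, obtaining a tiling $S_0\oplus D=\ZZZ_n$. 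Counting cardinalities gives $|S_0|\,|D|=n$, hence $p\mid n$; write $H=\langle p\rangle$, a subgroup of index $p$. It remains to prove $(S_0-S_0)\cap H=\{0\}$, which, as $|S_0|=p=[\ZZZ_n:H]$, is equivalent to saying that $S_0$ is a transversal of $H$ (equivalently, that $\pi_H$ is injective on $S_0$).

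First I would descend to the quotient $\ZZZ_n/H\cong\ZZZ_p$. Let $c,d\colon\ZZZ_p\to\ZZZ_{\ge0}$ record the multiplicities of the residues of $S_0$ and $D$ modulo $H$. Counting the elements of $\ZZZ_n$ in each fibre of $\pi_H$ converts the tiling into the convolution identity $c*d\equiv n/p$ on $\ZZZ_p$, so $\widehat c(\psi)\widehat d(\psi)=0$ for every nontrivial character $\psi$ of $\ZZZ_p$. Writing $\widehat c(\psi_k)=c(\zeta_p^k)$ for a fixed primitive $p$-th root $\zeta_p$, these values are Galois conjugate over $\mathbb{Q}$ as $k$ runs over $1,\dots,p-1$, hence either all zero or all nonzero. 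In the all-zero case $\widehat c$ is supported only at the trivial character, so (as $c$ is integer-valued with $\sum_j c_j=p$) we get $c\equiv 1$, i.e. $S_0$ is the desired transversal and we are done. The entire difficulty is thus to rule out the \emph{exceptional} case, in which every $\widehat c(\psi)\neq 0$ and the identity forces $\widehat d(\psi)=0$ for all nontrivial $\psi$, so that $d$ is constant and $p^2\mid n$.

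The main obstacle is eliminating this exceptional case, and it is precisely here that connectivity must be used: the claim is false without it, since for instance $S_0=\{0,4\}$ tiles $\ZZZ_8$ while lying entirely inside $\langle 2\rangle$. My plan is to show that the exceptional case forces $S_0$ into a single coset of $H$, contradicting $\langle S_0\rangle=\ZZZ_n$. Passing to mask polynomials, the tiling reads $S_0(x)D(x)\equiv(x^n-1)/(x-1)\pmod{x^n-1}$, so $\Phi_p\mid S_0D$; the exceptional case means $\Phi_p\nmid S_0$. Since $S_0$ is a tile with $S_0(1)=p$ a prime, the Coven--Meyerowitz structure theory (condition (T1)) yields a unique prime power $p^{\alpha}$ with $\Phi_{p^{\alpha}}\mid S_0$, and $\Phi_p\nmid S_0$ forces $\alpha\ge2$.

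It then remains to prove the following key structural lemma and draw the conclusion: if a $0/1$ polynomial $S_0(x)$ with $S_0(1)=p$ is divisible by $\Phi_{p^{\alpha}}$ for some $\alpha\ge2$, then all exponents occurring in $S_0$ are congruent modulo $p^{\alpha-1}$ (hence modulo $p$). To see this I would expand $S_0(\zeta)=0$ for a primitive $p^{\alpha}$-th root $\zeta$ in the $\mathbb{Q}(\zeta_p)$-basis $\{1,\zeta,\dots,\zeta^{p^{\alpha-1}-1}\}$ of $\mathbb{Q}(\zeta)$: grouping the exponents $a$ of $S_0$ according to $a\bmod p^{\alpha-1}$ splits the vanishing of $S_0(\zeta)$ into $p^{\alpha-1}$ separate relations of the form $\sum_{a}\zeta_p^{\,t_a}=0$ in $\ZZZ[\zeta_p]$, one per residue class, where $t_a$ is the next $p$-ary digit of $a$. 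Since $1+\zeta_p+\cdots+\zeta_p^{p-1}=0$ is, up to scaling, the only $\ZZZ$-linear relation among $1,\zeta_p,\dots,\zeta_p^{p-1}$, nonnegativity of the multiplicities together with $|S_0|=p$ forces all exponents into a single class modulo $p^{\alpha-1}$. Because $0\in S_0$, that class is $0$, so $S_0\subseteq\langle p^{\alpha-1}\rangle\subseteq\langle p\rangle=H$, whence $\langle S_0\rangle\subseteq H\subsetneq\ZZZ_n$, contradicting connectivity. This rules out the exceptional case, proving that $S_0$ is a transversal of $H$ and hence $(S_0-S_0)\cap H=\{0\}$; a final application of Theorem~\ref{le:leobr07} re-identifies $\langle p\rangle$ as a perfect code, completing the proof.
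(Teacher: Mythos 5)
Your proof is correct, but note first that the paper contains no proof of this statement to compare against: Theorem~\ref{le:ledeng17p} is quoted from \cite{deng17} and \cite{fhz17}, and is then used as the base case ($\ell=1$) of the inductions that establish Theorems~\ref{th:thplp} and~\ref{th:th2.2.1}. The meaningful comparison is therefore with the paper's own machinery, and your route is genuinely different from it. The paper runs everything through Lemma~\ref{thm:primepower} (in a factorization $G=A\oplus B$ of a cyclic group with $|A|$ a prime power, $A$ or $B$ is periodic) together with the quotienting Lemma~\ref{le:ledeng1424}; specialized to $|S_0|=p$ these give a short proof of the present statement: $S_0$ cannot be periodic (it would then be the order-$p$ subgroup, so $\langle S_0\rangle\neq\ZZZ_n$ as the graph is non-complete), hence $D$ is periodic with subgroup of periods $K$; Lemma~\ref{le:ledeng1424} yields $(S_0-S_0)\cap K=\{0\}$ and a tiling of $\ZZZ_n/K$ by $S_0/K$ against the aperiodic set $D/K$, and a second application of Lemma~\ref{thm:primepower} forces $S_0/K$ to be the order-$p$ subgroup of $\ZZZ_n/K$, whence connectivity pins down $K=\langle p\rangle$. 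You instead prove everything by Fourier analysis on $\ZZZ_n/H\cong\ZZZ_p$: the convolution identity $c*d\equiv n/p$, the Galois-conjugacy dichotomy for the sums $S_0(\zeta_p^k)$, and elimination of the exceptional case via Coven--Meyerowitz plus the structure of integral vanishing sums of $p^{\alpha}$-th roots of unity. I checked the key structural lemma (the expansion over the basis $1,\zeta,\dots,\zeta^{p^{\alpha-1}-1}$ of $\mathbb{Q}(\zeta)$ over $\mathbb{Q}(\zeta_p)$, and the conclusion that all exponents of $S_0$ lie in a single class mod $p^{\alpha-1}$) and it is sound; $S_0\subseteq\langle p\rangle$ then indeed contradicts connectedness. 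What your approach buys is independence from the periodicity theorem behind Lemma~\ref{thm:primepower}, which the authors single out in Section~\ref{sec:final} as the bottleneck of their whole method; what it costs is the appeal to Coven--Meyerowitz (T1), which is heavier than necessary here: since $\Phi_{p^j}\mid S_0(x)D(x)$ for every $1\le j\le\beta$, where $p^{\beta}$ is the exact power of $p$ dividing $n$, having all of these factors divide $D(x)$ would give $p^{\beta}\mid D(1)=n/p$ upon evaluating at $x=1$, a contradiction; and since $\Phi_p\mid D(x)$ in your exceptional case, this pigeonhole already produces $\Phi_{p^{j}}\mid S_0(x)$ for some $j\ge2$, with no tiling theory needed. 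One cosmetic caution: your ``(T1)'' is the Coven--Meyerowitz condition, which clashes with condition (T1) of Definition~\ref{def:pyra}, so it should be renamed in this context.
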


Let $G$ be an abelian group. For two subsets $A, B$ of $G$, their sumset is defined as $A+ B=\{a+b: a\in A\text{ and } b\in B\}$. Set $-B = \{-b: b \in B\}$ and $A - B = A + (-B)$. If the elements in $A+B$ are pairwise distinct, then $A+B$ is a \emph{direct sum} and we write $A\oplus B$ to indicate this. If $A\oplus B = G$, then we say that $G = A\oplus B$ is a \emph{factorization} of $G$ into $A$ and $B$; if in addition $0 \in A \cap B$, then $(A, B)$ is called a \emph{tiling} of $G$. It can be verified (see, for example, \cite{deng14}) that a subset $D$ of $G$ is a perfect code in a Cayley graph $\cay(G,S)$ of $G$ if and only if $G = S_0\oplus D$. For a subgroup $H$ of $G$, a \emph{transversal} of $H$ in $G$ is a subset $X$ of $G$ which contains precisely one element from each coset of $H$ in $G$. It can be verified that $H$ is a perfect code in $\cay (G, S)$ if $S_0$ is a transversal of $H$ in $G$.

\begin{lemma}
\label{le:ledirectsum}
\cite[Lemma 2.2]{SS09}
Let $G$ be an abelian group, and let $A$ and $B$ be non-empty subsets of $G$. The
following statements are all equivalent:
\begin{itemize}
\item[(a)] $G=A\oplus B$;
\item[(b)] $G=A+B$ and $|G|=|A||B|$;
\item[(c)] $|G|=|A||B|$ and $(A-A)\cap(B-B)\subseteq \{0\}$;
\item[(d)] $G=A+B$ and $(A-A)\cap(B-B)\subseteq \{0\}$.
\end{itemize}
\end{lemma}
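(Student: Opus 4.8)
The plan is to reduce all four conditions to statements about the single \emph{addition map} $\mu \colon A \times B \to G$ given by $\mu(a,b) = a+b$, and then exploit the finiteness of $G$. The definition of direct sum says precisely that condition (a) holds if and only if $\mu$ is a bijection: surjectivity of $\mu$ is literally $G = A+B$, and injectivity of $\mu$ is the requirement that the elements of $A+B$ be pairwise distinct. Since $|A \times B| = |A|\,|B|$ and every set in sight is finite, the pigeonhole principle supplies the engine of the argument: once we know $|A|\,|B| = |G|$, the map $\mu$ is injective if and only if it is surjective if and only if it is bijective. This is what lets us trade one half of each pair of conditions for the other.

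The one genuinely algebraic step I would isolate as a claim is the equivalence
$$
\mu \text{ is injective} \iff (A-A)\cap(B-B) \subseteq \{0\}.
$$
For the forward direction of the claim, suppose $(A-A)\cap(B-B)\subseteq\{0\}$ and $a+b = a'+b'$ with $a,a'\in A$ and $b,b'\in B$; then $a-a' = b'-b$ lies in $(A-A)\cap(B-B)$, hence is $0$, forcing $a=a'$ and $b=b'$. Conversely, if $\mu$ is injective and $x\in(A-A)\cap(B-B)$, write $x = a-a' = b'-b$, so that $a+b = a'+b'$; injectivity gives $a=a'$ and $b=b'$, whence $x=0$. This claim is the heart of the lemma and the only place where the group operation, rather than mere counting, is used.

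With $\mu$ and this claim in hand, I would close the loop of implications. By definition (a) means ``$\mu$ bijective'', i.e. ``$\mu$ surjective and $\mu$ injective'', which by the claim is exactly ``$G=A+B$ and $(A-A)\cap(B-B)\subseteq\{0\}$'', giving (a)$\Leftrightarrow$(d) with no counting whatsoever. For the other two, I would prove (b)$\Rightarrow$(a) by noting that surjectivity of $\mu$ together with $|G| = |A|\,|B| = |A\times B|$ forces bijectivity by finiteness, and (c)$\Rightarrow$(a) by noting that injectivity of $\mu$ (via the claim) together with the same cardinality equality again forces bijectivity. The reverse directions (a)$\Rightarrow$(b) and (a)$\Rightarrow$(c) are immediate, since a bijective $\mu$ yields $G=A+B$, the count $|G| = |A\times B| = |A|\,|B|$, and injectivity. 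Chaining these gives (a)$\Leftrightarrow$(b)$\Leftrightarrow$(c)$\Leftrightarrow$(d). I anticipate no serious obstacle; the only subtlety is bookkeeping—keeping the roles of surjectivity, injectivity, and the cardinality count distinct, and remembering that the injective/surjective interchange for $\mu$ is valid precisely when both the finiteness of $G$ and the hypothesis $|G| = |A|\,|B|$ are in force.
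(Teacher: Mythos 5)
Your proof is correct and complete: the reduction to the addition map $\mu\colon A\times B\to G$, the claim that injectivity of $\mu$ is equivalent to $(A-A)\cap(B-B)\subseteq\{0\}$, and the finiteness argument interchanging injectivity and surjectivity of $\mu$ once $|G|=|A||B|=|A\times B|$ is in force are exactly the standard argument for this lemma. The paper itself does not prove the statement but cites it from \cite{SS09}, whose proof runs along essentially the same lines, so there is nothing to repair; note only that the paper's blanket convention that all groups are finite is what legitimizes your pigeonhole step, and that since $0\in(A-A)\cap(B-B)$ automatically (as $A,B\neq\emptyset$), the containment $\subseteq\{0\}$ you use is in fact an equality.
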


\begin{lemma}
\label{le:ledeng1424}
\cite[Lemma 2.4]{deng14}
Let $A$ and $B$ be subsets of an abelian group $G$. Assume that $A$ is periodic with subgroup of periods $H$. Then $G=A\oplus B$ if and only if $G/H=(A/H)\oplus (B/H)$ and $(B-B)\cap H=\{0\}$.
\end{lemma}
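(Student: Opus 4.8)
The plan is to reduce everything to the cardinality criterion of Lemma \ref{le:ledirectsum}, exploiting two elementary facts that follow from the hypothesis that $H$ is the subgroup of periods of $A$. First, since $A$ is $H$-periodic it is a union of cosets of $H$, so the projection $\pi_H$ collapses $A$ exactly $|H|$-to-one and $|A/H| = |A|/|H|$. Second, the \emph{absorption} property: for any $a \in A$ and any $h \in H$ we have $a + h \in A$, which lets me freely shift a representative in $A$ by an element of $H$. I would also record the elementary equivalence that $(B-B) \cap H = \{0\}$ holds if and only if $\pi_H$ restricted to $B$ is injective; indeed $\pi_H(b) = \pi_H(b')$ means $b - b' \in H$, so under this condition distinct elements of $B$ have distinct images and $|B/H| = |B|$.

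For the forward implication, assume $G = A \oplus B$. Projecting a representation $g = a + b$ gives $\pi_H(g) = \pi_H(a) + \pi_H(b)$, so $G/H = (A/H) + (B/H)$ at once. To get $(B-B) \cap H = \{0\}$, I would suppose $b - b' = h \in H$ with $b, b' \in B$; fixing any $a \in A$ and using absorption, $a + b = (a + h) + b'$ exhibits one element with two representations in $A + B$, so uniqueness of the direct sum forces $b = b'$ and hence $h = 0$. With injectivity of $\pi_H|_B$ now available, I count: $|A/H|\,|B/H| = (|A|/|H|)\,|B| = |A||B|/|H| = |G|/|H| = |G/H|$, where $|A||B| = |G|$ comes from $G = A \oplus B$ via Lemma \ref{le:ledirectsum}. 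Combined with $G/H = (A/H)+(B/H)$, part (b) of Lemma \ref{le:ledirectsum} yields $G/H = (A/H) \oplus (B/H)$.

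For the reverse implication, assume $G/H = (A/H)\oplus (B/H)$ and $(B-B)\cap H = \{0\}$. Given $g \in G$, I write $\pi_H(g) = \pi_H(a) + \pi_H(b)$ with $a \in A$, $b \in B$; then $g - a - b = h \in H$, and absorption gives $g = (a+h) + b$ with $a + h \in A$, so $G = A + B$. For the count, the direct sum in the quotient gives $|G/H| = |A/H|\,|B/H|$, and feeding in $|A/H| = |A|/|H|$ and $|B/H| = |B|$ (again from injectivity of $\pi_H|_B$) produces $|G| = |A||B|$. By Lemma \ref{le:ledirectsum}(b) applied to $G = A + B$ with matching cardinalities, $G = A \oplus B$.

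I expect the only real content, as opposed to bookkeeping, to be the absorption argument: periodicity of $A$ is exactly what allows an element of $H$ to be moved from the $B$-side to the $A$-side of a representation, and it is used symmetrically — to force uniqueness (hence $(B-B)\cap H = \{0\}$) in one direction and to promote $G = A + B$ from its quotient version in the other. The rest is a cardinality balance that hinges on the single nontrivial identity $|B/H| = |B|$, which is precisely what $(B-B)\cap H = \{0\}$ buys. No deeper structure theory is needed.
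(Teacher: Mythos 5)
Your proof is correct. Note that the paper does not prove Lemma~\ref{le:ledeng1424} at all---it is quoted from \cite[Lemma 2.4]{deng14}---so there is no in-paper argument to compare against; your derivation, using the absorption property of $H$-periodicity to move elements of $H$ across a representation together with the cardinality criteria of Lemma~\ref{le:ledirectsum} (via $|A/H|=|A|/|H|$ and the equivalence of $(B-B)\cap H=\{0\}$ with injectivity of $\pi_H|_B$, hence $|B/H|=|B|$), is exactly the standard self-contained proof of this fact, with the only tacit assumption being finiteness of $G$ in the counting steps, which the paper assumes throughout.
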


\begin{lemma}
\label{le:lesandsbook}
\cite[Theorem~3.19]{SS09}
If an abelian group $G$ can be factorized as $G = H \oplus K = A\oplus B$, where $H, K$ are subgroups of $G$ with relatively prime orders and $A, B$ are subsets of $G$ with $|A|=|H|$ and $|B|=|K|$, then $G=H\oplus B=A\oplus K$.
\end{lemma}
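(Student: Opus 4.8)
The plan is to reduce both asserted factorizations to a single transversality statement and then attack it with characters. Write $h=|H|$ and $k=|K|$, so that $\gcd(h,k)=1$, $|A|=h$ and $|B|=k$. First I would observe that the hypotheses are invariant under the simultaneous substitution $(H,A)\leftrightarrow(K,B)$, and that this substitution interchanges the two conclusions; hence it suffices to prove $G=A\oplus K$ for \emph{every} configuration satisfying the assumptions, and $G=H\oplus B$ then follows by applying the result to the substituted configuration. Next, since $|A|\,|K|=hk=|G|$, Lemma~\ref{le:ledirectsum} shows that $G=A\oplus K$ is equivalent to $(A-A)\cap K=\{0\}$, i.e. to $A$ being a transversal of $K$. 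Finally, passing to the group ring $\ZZZ[G]$ and writing $\underline{X}=\sum_{x\in X}x$ and $\widehat{X}(\chi)=\sum_{x\in X}\chi(x)$ for a character $\chi$ of $G$, the relation $G=A\oplus K$ reads $\underline{A}\,\underline{K}=\underline{G}$; evaluating characters and using $\widehat{K}(\chi)=k$ when $\chi|_K=1$ and $\widehat{K}(\chi)=0$ otherwise, this becomes the requirement that $\widehat{A}(\chi)=0$ for every nontrivial character $\chi$ of $G$ that is trivial on $K$ (equivalently, of order dividing $h$, since $\gcd(h,k)=1$).

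The factorization $G=A\oplus B$ gives $\widehat{A}(\chi)\,\widehat{B}(\chi)=0$, so for each relevant $\chi$ it is enough to rule out $\widehat{B}(\chi)\neq 0$, and the core of the argument is exactly this nonvanishing. This part is clean when $\chi$ has prime-power order $p^j$: then $p\mid h$, hence $p\nmid k$, and reducing modulo a prime of $\ZZZ[\zeta_{p^j}]$ lying over $p$ (where $\zeta_{p^j}\equiv 1$) yields $\widehat{B}(\chi)\equiv\sum_{b\in B}1=k\not\equiv 0$, so $\widehat{B}(\chi)\neq 0$; equivalently, a vanishing $\{0,1\}$-sum of $p^j$-th roots of unity has a multiple of $p$ many terms, while $|B|=k$ is prime to $p$. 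Thus $\widehat{A}(\chi)=0$ for every $\chi$ of prime-power order trivial on $K$. In the primary decomposition $G=\bigoplus_{p}G_{(p)}$ (with $H=\bigoplus_{p\mid h}G_{(p)}$ and $K=\bigoplus_{q\mid k}G_{(q)}$) this says precisely that $A$ projects onto each Sylow subgroup $G_{(p)}$ with $p\mid h$ with constant multiplicity.

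The hard part will be the characters $\chi$ of composite order $m\mid h$ with at least two prime divisors, for which I still need $\widehat{A}(\chi)=0$: here the reduction above fails, because $1-\zeta_m$ is a \emph{unit} of $\ZZZ[\zeta_m]$, so $\widehat{B}(\chi)$ may vanish and the factorization alone gives no leverage. In fact the one-dimensional marginals (the Sylow projections) are genuinely insufficient to force $A$ to be a transversal, so the joint structure of $A$ must enter, and I expect this to be the main obstacle. My plan to overcome it is an induction on the number of distinct prime divisors of $h$: fixing $p\mid h$, I would use the constant-multiplicity projection onto $G_{(p)}$ established above as the lever to split off the Sylow $p$-direction and descend to $G/G_{(p)}$, in which the image of $K$ remains a subgroup (as $p\nmid k$) of index $h/|G_{(p)}|$, a number with strictly fewer prime divisors, while $\gcd(h,k)=1$ keeps the two factors of coprime order throughout so that the inductive hypothesis applies and propagates $(A-A)\cap K=\{0\}$ back to $G$. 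The only genuinely non-routine point — and the crux of the whole proof — is to make this descent valid, i.e. to show that the factorization $G=A\oplus B$ actually descends to a factorization of $G/G_{(p)}$ of the correct sizes; equivalently, to upgrade the prime-power character vanishing to vanishing at every $\chi$ trivial on $K$.
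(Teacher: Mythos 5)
You have assembled a correct frame but not a proof, and the hole is exactly the one you flag yourself: the vanishing $\widehat{A}(\chi)=0$ at nontrivial characters $\chi$ of composite order dividing $h$ that are trivial on $K$. The reductions preceding it are sound (the $(H,A)\leftrightarrow(K,B)$ symmetry, the equivalence $G=A\oplus K \Leftrightarrow (A-A)\cap K=\{0\}$ via Lemma~\ref{le:ledirectsum}, and the prime-power case by reduction modulo a prime above $p$), but the composite case cannot be closed by cardinality information alone: by the Lam--Leung theorem, a multiset of $N$ roots of unity of order dividing $m$ can sum to zero exactly when $N$ is a nonnegative integer combination of the primes dividing $m$, and $\gcd(k,h)=1$ does not exclude this --- for $h=6$ and $k=5$ one has the vanishing multiset $1+\zeta_3+\zeta_3^2+1+(-1)=0$ of five sixth roots of unity, so the pointwise nonvanishing $\widehat{B}(\chi)\neq 0$ you would need is not forced by the sizes. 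Your proposed rescue, descending to $G/G_{(p)}$, is circular: for the factorization $G=A\oplus B$ to induce a factorization of the quotient with $A$ mapping injectively, you need $\widehat{A}(\chi)=0$ (or $\widehat{B}(\chi)\neq 0$) for \emph{all} nontrivial $\chi$ trivial on $G_{(p)}$, which includes precisely the composite-order characters at issue. The constant-multiplicity statement you do establish is only a one-dimensional marginal of $A$ on $G_{(p)}$ and does not yield a factorization of $G/G_{(p)}$. So the crux of the argument is assumed rather than proved, and the lemma remains unestablished by the proposal.

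For comparison: the paper offers no proof of its own --- Lemma~\ref{le:lesandsbook} is quoted from \cite[Theorem~3.19]{SS09} --- and the standard argument there avoids characters entirely, resting instead on the replacement theorem for coprime multiples: if $G=A\oplus B$ is a factorization and $\gcd(r,|A|)=1$, then $G=rA\oplus B$ is again a factorization, where $rA=\{ra: a\in A\}$. Applying this with $r=k=|K|$ (legitimate since $\gcd(k,|A|)=\gcd(k,h)=1$) gives $G=kA\oplus B$; since multiplication by $k$ annihilates $K$ and is an automorphism of $H$, we have $kG=H$, so $kA\subseteq H$, and counting forces $kA=H$, whence $G=H\oplus B$. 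Moreover $x\mapsto kx$ is then injective on $A$, and because $a-a'\in K$ holds if and only if $k(a-a')=0$ (coprimality again, via the decomposition $G=H\oplus K$), injectivity gives $(A-A)\cap K=\{0\}$, and Lemma~\ref{le:ledirectsum} yields $G=A\oplus K$; the symmetry you noted supplies the other conclusion. That replacement theorem is exactly the ``joint structure'' input your character framework is missing; since proving it is of the same order of difficulty as the lemma itself, the honest options are either to cite it and run the short argument above, or to cite the lemma as the paper does.
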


\begin{lemma}\label{le:lesdtgd}
Let $H$ be a subgroup of $\ZZZ_n$, and let $A$ and $B$ be non-empty subsets of $\ZZZ_n$ with $|A/H|=|A|$. If $\ZZZ_n/H=(A/H) \oplus (B/H)$, then $\ZZZ_n=A\oplus (B+H)$.
\end{lemma}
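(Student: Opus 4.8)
The plan is to reduce the desired statement to the standard characterisation of direct sums in Lemma~\ref{le:ledirectsum}, namely condition~(b): once we know both that $\ZZZ_n = A + (B+H)$ and that $|\ZZZ_n| = |A|\,|B+H|$, the sum is automatically direct and we are done. So I would establish these two facts separately, the cardinality count first and the covering statement second.

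For the cardinality, the key point is that $B+H$ is the disjoint union of the $|B/H|$ distinct cosets of $H$ that meet $B$, each of size $|H|$, so $|B+H| = |B/H|\,|H|$ (note that $B$ itself need not be a transversal, and distinct elements of $B$ may lie in a common coset). Since $\ZZZ_n/H = (A/H)\oplus(B/H)$ is a direct sum, the quotient has order $|A/H|\,|B/H|$, i.e. $n/|H| = |A/H|\,|B/H|$. Combining this with the hypothesis $|A| = |A/H|$ gives
$
|A|\,|B+H| = |A/H|\,|B/H|\,|H| = (n/|H|)\,|H| = n = |\ZZZ_n|,
$
which is the required cardinality equality.

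For the covering, I would observe that $A + (B+H) = A + B + H$ is invariant under translation by $H$, hence is a union of cosets of $H$; and its image under $\pi_H$ is $A/H + B/H$, which is all of $\ZZZ_n/H$ by hypothesis. A union of $H$-cosets that meets every coset must equal $\ZZZ_n$, so $\ZZZ_n = A + (B+H)$. Invoking Lemma~\ref{le:ledirectsum}(b) then yields $\ZZZ_n = A \oplus (B+H)$. (Equivalently, one could verify condition~(c) of Lemma~\ref{le:ledirectsum} directly: since $(B+H)-(B+H) = (B-B)+H$, any common element of $A-A$ and $(B-B)+H$ projects in $\ZZZ_n/H$ to a common element of $A/H - A/H$ and $B/H - B/H$, which is $0$ by directness of the quotient factorization; then injectivity of $\pi_H$ on $A$ forces the original element to be $0$.)

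There is no deep obstacle here; the only point that needs care is the cardinality bookkeeping. Specifically, one must count $|B+H|$ correctly as $|B/H|\,|H|$ and use directness of the quotient factorization to replace $|A/H|\,|B/H|$ by $n/|H|$. The role of the hypothesis $|A| = |A/H|$ — equivalently, that $\pi_H$ is injective on $A$, i.e. $(A-A)\cap H = \{0\}$ — is exactly to guarantee that $A$ contributes its full size against the extra copy of $H$ absorbed into $B+H$, so that the product $|A|\,|B+H|$ lands on $n$ rather than overshooting; this is also precisely what is needed in the alternative route to rule out a nonzero element of $(A-A)\cap((B-B)+H)$.
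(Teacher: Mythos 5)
Your proposal is correct and follows essentially the same route as the paper's proof: establish the cardinality identity $|\ZZZ_n|=|A|\,|B+H|$ (via $|B+H|=|B/H|\,|H|$ together with $n/|H|=|A/H|\,|B/H|$ and $|A|=|A/H|$), establish the covering $\ZZZ_n=A+(B+H)$ from the quotient factorization, and then invoke Lemma~\ref{le:ledirectsum}(b). The only cosmetic difference is that you count $|B+H|$ directly as a disjoint union of $|B/H|$ cosets of $H$, whereas the paper reaches the same equality through an explicit transversal and bijection; your version is cleaner but not a genuinely different argument.
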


\begin{proof}
Since $\ZZZ_n/H=(A/H) \oplus (B/H)$ and $|A/H|=|A|$, we have $n/|H|=|\ZZZ_n/H|=|A/H||B/H|=|A||B/H|$. Since $H$ is a subgroup of $\ZZZ_n$, it is of the form $H=\langle y \rangle$ for some integer $y$ between $1$ and $n-1$. Define $D=\{d<y:d+H\in B/H\}$. Since $\{0, 1, \ldots, y-1\}$ is a transversal of $H$ in $\ZZZ_n$, we have $|D|=|B/H|$.

Define $\phi:D+H\rightarrow B+H$ such that $\phi((d, h))=d+h$ for $d \in D$ and $h\in H$. If $\phi((d_1,h_1))=\phi((d_2,h_2))$ for $(d_1, h_1), (d_2, h_2) \in D+H$, then $d_1+h_1=d_2+h_2$, which implies $d_1=d_2$ and so $h_1=h_2$ as $0\leq d_1,d_2 < y$. Hence $\phi$ is injective. Also, for any $x \in B+H$, there exists an integer $z$ with $0 \leq z < y$ such that $x=z+h=\phi((z,h))$. So $\phi$ is surjective as well. Therefore, $\phi$ is a bijection and hence $|D||H|=|B+H|$. Thus, $n=|A||B/H||H|=|A||D||H|=|A||B+H|$. Since $\ZZZ_n/H=(A/H) \oplus (B/H)$, we then have $\ZZZ_n=A+(B+H)$. This together with Lemma \ref{le:ledirectsum} yields $\ZZZ_n=A\oplus(B+H)$.
\end{proof}

If $G$ is a cyclic group and $G=A\oplus B$ is a factorization of $G$ with $\gcd (|A|,|B|)=1$, then the conclusion in Lemma \ref{le:lesandsbook} follows automatically because in this case there is a unique pair of subgroups $H, K$ of $G$ with $|H|=|A|$ and $|K|=|B|$. For cyclic groups, we have the following more specific result.

\begin{lemma}
\label{thm:primepower}
\cite[Theorem 4.4]{SS09}
Let $G$ be a cyclic group and $G=A\oplus B$ a factorization of $G$. If $|A|=p^l$ with $p$ a prime, then either $A$ or $B$ is periodic.	
\end{lemma}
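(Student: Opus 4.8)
The plan is to translate the factorization into the language of mask polynomials and cyclotomic factors, settle the prime-power-order case by a direct argument, and reduce the general case to it by induction.

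First I would reformulate. Associating with subsets their mask polynomials $A(x)=\sum_{a\in A}x^a$ and $B(x)=\sum_{b\in B}x^b$ in $\ZZZ[x]$, the factorization $\ZZZ_n=A\oplus B$ is equivalent to $A(x)B(x)\equiv 1+x+\cdots+x^{n-1}\pmod{x^n-1}$. Since $x^n-1=\prod_{d\mid n}\Phi_d(x)$ with the cyclotomic polynomials $\Phi_d$ irreducible over $\mathbb{Q}$, evaluating at a primitive $d$-th root of unity for each divisor $d>1$ of $n$ gives $A(\zeta_d)B(\zeta_d)=0$, so $\Phi_d(x)$ divides $A(x)$ or $B(x)$ in $\ZZZ[x]$. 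I record $Z_A=\{d\mid n: d>1,\ \Phi_d\mid A\}$ and $Z_B$ analogously, so that $Z_A\cup Z_B=\{d\mid n:d>1\}$. The key dictionary item is a periodicity criterion: for a prime $q\mid n$, the set $A$ is invariant under translation by $n/q$, i.e. invariant under the order-$q$ subgroup $\langle n/q\rangle$ (equivalently $(x^{n/q}-1)A(x)\equiv 0\pmod{x^n-1}$), if and only if $\Phi_e\mid A$ for every divisor $e$ of $n$ whose $q$-part is maximal, that is $v_q(e)=v_q(n)$. Thus $A$ is periodic precisely when, for some prime $q\mid n$, all such \emph{full $q$-part} divisors lie in $Z_A$.

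The heart of the argument is the prime-power-order case $n=p^k$. Here the only divisors $d>1$ are the pure powers $p,p^2,\dots,p^k$, and the single full-$p$-part divisor is $p^k$. Since $\Phi_{p^k}$ divides exactly one of $A,B$, the corresponding factor is invariant under $\langle p^{k-1}\rangle$ and hence periodic; this disposes of $n=p^k$ with no case analysis. For general $n$ I would next extract the arithmetic restriction forced by $|A|=p^l$: writing $A(x)=\prod_{d\in Z_A}\Phi_d(x)\cdot g(x)$ with $g\in\ZZZ[x]$ and evaluating at $x=1$, and using $\Phi_{q^j}(1)=q$ and $\Phi_d(1)=1$ when $d$ has at least two distinct prime factors, one finds $p^l=A(1)=p^{s}\,g(1)$ with $g(1)\ge 1$ a positive integer and $s=|\{j\ge 1:p^j\in Z_A\}|$. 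Consequently $Z_A$ contains no pure power $q^j$ with $q\ne p$; equivalently every pure $q$-power divisor with $q\neq p$ lies in $Z_B$. This is already all of the full-$q$-part divisors in the prime-power case, and it is the first, \emph{prime-direction} step toward periodicity of $B$ in the general case.

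To finish I would induct on the number of prime factors of $n$. Given a prime $q\neq p$ dividing $n$ and the order-$q$ subgroup $Q=\langle n/q\rangle$, the aim is to pass to a factorization of $\ZZZ_{n/q}$ carrying a factor of size $p^l$, invoke the inductive hypothesis, and lift back, using Lemma \ref{le:ledeng1424} to descend a periodic factor through the projection $\pi_Q$, Lemma \ref{le:lesdtgd} to lift, and Lemma \ref{le:lesandsbook} to replace $B$ by the subgroup of matching coprime order in the boundary subcase where $l$ equals the exponent of $p$ in $n$. The main obstacle is exactly the gap left by the evaluation at $1$: it controls only the pure prime-power cyclotomic factors, whereas periodicity of $B$ requires controlling the composite full-$q$-part divisors $e$ (those with several prime divisors and $v_q(e)=v_q(n)$), which may a priori sit in $Z_A$ without violating $|A|=p^l$ precisely because $\Phi_e(1)=1$ for such $e$. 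Ruling these out, equivalently showing that the projection modulo $Q$ really is a factorization so that the induction engages, is the crux; I expect to handle it by combining the characteristic-$p$ reduction $x^n-1=(x^m-1)^{p^k}$ in $\mathbb{F}_p[x]$ (where $m$ is the $p'$-part of $n$) with the replacement Lemma \ref{le:lesandsbook}, which together should force one factor to absorb all full-$q$-part divisors for some prime $q$.
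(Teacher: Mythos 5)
The paper does not prove Lemma \ref{thm:primepower} at all: it is imported verbatim as Theorem 4.4 of Szab\'o and Sands \cite{SS09}, whose proof is a substantial piece of factorization theory. So the only question is whether your sketch amounts to an independent proof, and it does not — you have in fact located the missing step yourself. To give credit first: the mask-polynomial dictionary is set up correctly; the criterion that $A$ is $\langle n/q\rangle$-periodic if and only if $\Phi_e\mid A$ for every divisor $e$ of $n$ with $v_q(e)=v_q(n)$ is correct; the case $n=p^k$ is handled completely; and the evaluation at $x=1$ does show that every $\Phi_{q^j}$ with $q\neq p$ must divide $B$.

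But the theorem is precisely the assertion that, for some \emph{single} prime $q$, all divisors $e$ with $v_q(e)=v_q(n)$ — including the composite ones, for which $\Phi_e(1)=1$ and the hypothesis $|A|=p^l$ therefore yields no constraint — send $\Phi_e$ into the same factor. Your proposal defers exactly this (``I expect to handle it by \ldots''), which is not an argument, and the tools you name do not plausibly supply one. The induction cannot engage: to pass to a factorization of $\ZZZ_n/Q$ you need $(A/Q)\oplus(B/Q)=\ZZZ_n/Q$, and by Lemma \ref{le:ledeng1424} this is tied to one factor already being $Q$-periodic — the very conclusion you are trying to prove — so the reduction is circular. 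Lemma \ref{le:lesandsbook} applies only in the boundary case $\gcd(|A|,|B|)=1$, and even there it produces factorizations $G=A\oplus K$ and $G=H\oplus B$ with $H,K$ subgroups, which by itself makes neither $A$ nor $B$ periodic. The identity $x^n-1=(x^m-1)^{p^k}$ over $\mathbb{F}_p$ likewise carries no information about which of $A,B$ absorbs $\Phi_e$ for a composite full-$q$-part divisor $e$. That this step cannot be soft is shown by the non-Haj\'os cyclic groups (for instance of order $p^3q^2$): they admit factorizations in which both factors are aperiodic, so composite cyclotomic factors genuinely can be distributed between the two factors in a mixed way in general; ruling this out under the hypothesis $|A|=p^l$ is the entire content of the cited theorem, and it is exactly the part your proposal leaves open.
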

	
Let $\Gamma$ and $\Si$ be graphs. The {\em wreath product} of $\Gamma$ and $\Si$ is the graph $\Gamma \otimes \Si$ with vertex set $V(\Gamma)\times V(\Si)$ such that $(u, v), (x,y)$ are adjacent if and only if  either $u, x$ are adjacent in $\Ga$ or $u=x$ and $v, y$ are adjacent in $\Si$.

\begin{lemma}
\label{le:lekum}
\cite[Proposition 3.2]{kum13}
Let $\Gamma$ be a graph without isolated vertices and $\Si$ a graph with at least one vertex. Then $\Gamma \otimes \Si$ admits a perfect code if and only if $\Gamma$ admits a perfect code and $\Si$ is a complete graph.
\end{lemma}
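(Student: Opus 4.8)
The plan is to argue directly from the reformulation noted in the introduction, that a perfect code is a set $D$ whose closed neighbourhoods $\{N[d]:d\in D\}$ partition the vertex set. The starting point is the shape of a closed neighbourhood in $\Gamma\otimes\Sigma$: for a vertex $(u,v)$,
\[
N[(u,v)]=(N_\Gamma(u)\times V(\Sigma))\cup(\{u\}\times N_\Sigma[v]),
\]
where $N_\Gamma(u)$ is the open neighbourhood. The crucial consequence is a dichotomy: a code vertex placed in the block $\{w\}\times V(\Sigma)$ over a $\Gamma$-neighbour $w$ of $u$ dominates the \emph{entire} block $\{u\}\times V(\Sigma)$, whereas a code vertex inside this block only dominates the $\Sigma$-closed-neighbourhood of its second coordinate. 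This block dichotomy drives both directions.

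For sufficiency, suppose $\Gamma$ has a perfect code $C$ and $\Sigma=K_m$ is complete. Fix any $v_0\in V(\Sigma)$ and take $D=C\times\{v_0\}$. Since $\Sigma$ is complete, the block over each $u\in C$ is dominated from within by $(u,v_0)$; the block over each $u\notin C$ is dominated by the single code vertex lying over the unique $C$-neighbour of $u$; and independence of $C$ prevents any vertex from being dominated twice. Checking that these closed neighbourhoods tile $V(\Gamma\otimes\Sigma)$ is then a routine verification, so $D$ is a perfect code.

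For necessity, let $D$ be a perfect code in $\Gamma\otimes\Sigma$, and set $D_u=\{v:(u,v)\in D\}$ and $P=\{u:D_u\neq\emptyset\}$. The dichotomy forces, for each $u$, that the block over $u$ is dominated \emph{either} entirely by one code vertex from a neighbouring block \emph{or} entirely from within by $D_u$, never by a mixture. From this I would deduce two things. First, $P$ is a perfect code in $\Gamma$: if adjacent $u,w$ both lay in $P$, a code vertex over $w$ would redundantly dominate a code vertex over $u$, so $P$ is independent; and each $u\notin P$ must be dominated from exactly one neighbouring block, giving it exactly one neighbour in $P$. Second, for $u\in P$ independence means no neighbouring block assists, so $\{N_\Sigma[v']:v'\in D_u\}$ must partition $V(\Sigma)$, i.e.\ $D_u$ is a perfect code in $\Sigma$. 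Finally, because $\Gamma$ has no isolated vertices, each $u\in P$ has a neighbour $u'\notin P$ whose block is dominated by a single code vertex, necessarily lying over $u$ (the unique $P$-neighbour of $u'$); hence $|D_u|=1$, so $\Sigma$ admits a perfect code of size one, i.e.\ a vertex adjacent to all others.

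The step I expect to be the main obstacle is the very last one in the necessity direction: passing from "$\Sigma$ has a vertex dominating all others" to "$\Sigma$ is complete". The fibre analysis on its own only produces one universal vertex, so additional input is needed to exclude non-complete graphs possessing a universal vertex; I would try to exploit that \emph{every} fibre $D_u$ over $u\in P$ must simultaneously be a size-one perfect code of $\Sigma$, together with the standing hypotheses of the source, to force all of $V(\Sigma)$ to be pairwise adjacent. Once the block dichotomy and the perfect-code status of $P$ are in place, the remaining disjointness-and-covering bookkeeping in both directions is routine.
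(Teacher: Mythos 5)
First, a point of comparison: the paper contains no proof of this lemma at all --- it is quoted from \cite[Proposition 3.2]{kum13} as an external result --- so your attempt has to be judged on its own merits rather than against an internal argument.

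Your sufficiency argument is correct, and your necessity analysis is correct up to precisely the step you flagged: the block dichotomy holds, $P$ is a perfect code in $\Gamma$, each fibre $D_u$ with $u\in P$ is a perfect code in $\Sigma$, and the no-isolated-vertices hypothesis forces $|D_u|=1$, so $\Sigma$ has a universal vertex. But the gap you identified is genuine and, in fact, cannot be closed, because the statement as quoted is false for arbitrary $\Sigma$. Take $\Gamma=K_2$ and $\Sigma=P_3$ (the path $a\hbox{--}b\hbox{--}c$). Then $\Gamma\otimes\Sigma$ is the join of two copies of $P_3$; the vertex $(1,b)$ is universal in it, so $\{(1,b)\}$ is a perfect code; $\Gamma=K_2$ admits the perfect code $\{1\}$; yet $P_3$ is not complete, contradicting the ``only if'' direction. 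What your fibre analysis actually proves is the correct general statement: for $\Gamma$ without isolated vertices, $\Gamma\otimes\Sigma$ admits a perfect code if and only if $\Gamma$ admits a perfect code and $\Sigma$ admits a perfect code of size one, i.e.\ $\Sigma$ has a universal vertex. ``Universal vertex'' is equivalent to ``complete'' exactly when $\Sigma$ is vertex-transitive --- in particular when $\Sigma$ is a Cayley graph, which covers every use of this lemma in the present paper, since there $\Sigma$ is always $\cay(H,H\setminus\{0\})$, a complete graph. So no ``additional input'' from the stated hypotheses can rescue the final step; the defect lies in the statement (at least as transcribed here), not in your argument, and your proof is as complete as the claim permits.
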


\begin{lemma}
\label{le:wreath}
Let $\cay(\ZZZ_{n}, S)$ be a circulant graph such that $S_0$ is periodic with subgroup of periods $H$. Then
\begin{equation}
\label{eq:eq1.1}
\cay(\ZZZ_n, S) \cong \cay(\ph(\ZZZ_n), \ph(S)\setminus \{H\}) \otimes \cay(H, H\setminus \{0\}).
\end{equation}
\end{lemma}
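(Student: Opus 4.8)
The plan is to realise the claimed isomorphism \eqref{eq:eq1.1} through an explicit bijection of vertex sets, using the $H$-periodicity of $S_0$ to pin down exactly which pairs of vertices are adjacent. First I would record two consequences of periodicity. Since $S_0$ is $H$-periodic and $0\in S_0$, the coset $H=0+H$ is contained in $S_0$, so $S_0$ is a union of cosets of $H$ and $H\setminus\{0\}\subseteq S$; in particular the second factor $\cay(H,H\setminus\{0\})$ is the complete graph on $H$. Moreover, from $\ph(S_0)=\ph(S)\cup\{H\}$ one obtains $\ph(S)\setminus\{H\}=(S_0/H)\setminus\{H\}$, so the connection set of the base factor is exactly $(S_0/H)\setminus\{H\}$.

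Next I would build the bijection. Fix a transversal $T$ of $H$ in $\ZZZ_n$ and, for a coset $\bar a\in\ZZZ_n/H$, let $t(\bar a)\in T$ denote its representative. Define $\psi:\ZZZ_n\to(\ZZZ_n/H)\times H$ by $\psi(x)=(\ph(x),\,x-t(\ph(x)))$, which is well defined because $x$ and $t(\ph(x))$ lie in the same coset, so $x-t(\ph(x))\in H$. Its inverse sends $(\bar a,h)$ to $t(\bar a)+h$, so $\psi$ is a bijection from $V(\cay(\ZZZ_n,S))$ onto the vertex set of the wreath product.

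I would then verify that $\psi$ preserves and reflects adjacency by splitting into two cases according to whether $x$ and $y$ lie in the same coset of $H$. If $\ph(x)=\ph(y)$, then $t(\ph(x))=t(\ph(y))$, so the $H$-coordinates of $\psi(x)$ and $\psi(y)$ differ by $x-y\in H$; when $x\ne y$ this difference lies in $H\setminus\{0\}\subseteq S$, whence $x,y$ are adjacent in $\cay(\ZZZ_n,S)$, while on the other side the two images share a first coordinate and have distinct (hence adjacent, the second factor being complete) second coordinates. If $\ph(x)\ne\ph(y)$, the second alternative in the wreath product cannot occur, so I must show that $x-y\in S$ if and only if $\ph(x-y)\in(S_0/H)\setminus\{H\}$. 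The forward direction is immediate, since $x-y\in S\subseteq S_0$ gives $\ph(x-y)\in S_0/H$ while $\ph(x-y)\ne H$; for the converse I lift: if $\ph(x-y)=\ph(s)$ with $s\in S_0$, then $x-y\in s+H\subseteq S_0$ by periodicity, and $\ph(x-y)\ne H$ forces $x-y\ne 0$, whence $x-y\in S$.

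The only genuinely load-bearing step is this last lifting argument, where periodicity is precisely what guarantees that membership in $S_0$ is detected by the quotient; the within-coset case and the bijectivity of $\psi$ are routine. Combining the two cases shows that $x,y$ are adjacent in $\cay(\ZZZ_n,S)$ if and only if $\psi(x),\psi(y)$ are adjacent in the wreath product, which establishes the isomorphism \eqref{eq:eq1.1}.
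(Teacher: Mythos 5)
Your proof is correct and follows essentially the same route as the paper: the paper also fixes a transversal $X$ of $H$ and maps $g = x + h$ (with $x \in X$, $h \in H$) to $(\ph(x), h)$, which is exactly your $\psi$. The only difference is that the paper asserts this map is a graph isomorphism without verification, whereas you supply the case analysis (same coset versus different cosets) and the periodicity-based lifting argument that justify it.
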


\begin{proof}
Let $X$ be a transversal of $H$ in $G$. Then the map $f:G\to (G/H) \times H$ defined by $f(g)=(\ph(x),h)$, where $x \in X$ and $h\in H$ are the unique pair of elements of $G$ such that $g=x+h$, is a graph isomorphism from the left-hand side to the right-hand side of \eqref{eq:eq1.1}.
\end{proof}

Note that $\cay(H, H\setminus \{0\})$ is the complete graph on $H$. The following lemma says that the converse of the statement in Lemma \ref{le:wreath} is also true.

\begin{lemma}
\label{le:legraphiso}
\cite[Theorem 28]{bh90}
A circulant graph $\cay(\ZZZ_{mn}, S)$ with order $mn$ is isomorphic to the wreath product of a circulant graph with order $n$ and the complete graph $K_m$ if and only if $S_0$ is $\langle n\rangle$-periodic.
\end{lemma}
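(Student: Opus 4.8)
The plan is to prove the two implications separately, the engine in both being the equivalence relation ``$N[a]=N[b]$'' (equality of closed neighbourhoods), which is invariant under any graph isomorphism. Throughout write $G=\ZZZ_{mn}$, recall that $\langle n\rangle$ is the unique subgroup of $G$ of order $m$, and note that in any Cayley graph the closed neighbourhood of a vertex $a$ is $N[a]=a+S_0$.

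For the sufficiency I would argue exactly as in the proof of Lemma \ref{le:wreath}, but taking $H=\langle n\rangle$. The one point needing care is that the isomorphism constructed there uses only $S_0$-periodicity with respect to $H$: it is enough that $S_0+\langle n\rangle=S_0$ (which gives $\langle n\rangle\setminus\{0\}\subseteq S$ and lets $S_0$ descend to $G/\langle n\rangle$), and it is not necessary that $\langle n\rangle$ be all of $G_{S_0}$. Granting this, the map sending $g=x+h$ (with $x$ in a fixed transversal of $\langle n\rangle$ and $h\in\langle n\rangle$) to $(\pi_{\langle n\rangle}(x),h)$ is a graph isomorphism from $\cay(G,S)$ onto $\cay(G/\langle n\rangle,\,\pi_{\langle n\rangle}(S)\setminus\{\langle n\rangle\})\otimes\cay(\langle n\rangle,\langle n\rangle\setminus\{0\})$. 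Since $G/\langle n\rangle\cong\ZZZ_n$ and $\cay(\langle n\rangle,\langle n\rangle\setminus\{0\})=K_m$, this is the desired wreath product of a circulant graph of order $n$ with $K_m$.

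For the necessity, suppose $\cay(G,S)\cong\Gamma\otimes K_m$ with $\Gamma$ circulant of order $n$. From $N[a]=a+S_0$ one reads off that $N[a]=N[b]$ iff $a-b\in G_{S_0}$, so the classes of the relation in $\cay(G,S)$ are exactly the cosets of the subgroup of periods $G_{S_0}$, all of common size $|G_{S_0}|$. On the product side a short computation gives $N[(u,v)]=N_\Gamma[u]\times V(K_m)$, whence the classes of $\Gamma\otimes K_m$ are the sets $C\times V(K_m)$ with $C$ a class of the same relation in $\Gamma$, of size $m|C|$. As the isomorphism carries classes to classes bijectively and preserves their sizes, every class $C\times V(K_m)$ must have size $|G_{S_0}|$; hence all classes $C$ of $\Gamma$ share one size $c:=|G_{S_0}|/m$ and $|G_{S_0}|=mc$.

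It remains to convert this numerical fact into the containment $\langle n\rangle\le G_{S_0}$, and here I would invoke that $G$ is cyclic. Since $|G_{S_0}|=mc$ divides $mn$ we get $c\mid n$, and a cyclic group has a unique subgroup of each order, so $G_{S_0}=\langle n/c\rangle$; because $n/c$ divides $n$ we conclude $\langle n\rangle\le\langle n/c\rangle=G_{S_0}$, i.e.\ $S_0$ is $\langle n\rangle$-periodic. The crux --- and the step I expect to be the only real obstacle --- is precisely this last passage: the hypothesised isomorphism need not respect the group structure and the fibre partition of $\Gamma\otimes K_m$ is not canonical (it coarsens when $\Gamma$ itself has twin vertices), so one cannot simply pull fibres back to cosets of $\langle n\rangle$. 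Replacing the fibres by the canonical closed-neighbourhood relation sidesteps this, but then the isomorphism yields only the divisibility $m\mid|G_{S_0}|$, and it is the uniqueness of subgroups of a given order in the cyclic group $\ZZZ_{mn}$ that upgrades this to the exact inclusion $\langle n\rangle\le G_{S_0}$.
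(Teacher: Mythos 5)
Your proof is correct, but note that the paper itself does not prove this lemma at all: it is quoted from \cite[Theorem 28]{bh90}, and the only direction proved in the paper is the ``if'' part, namely Lemma \ref{le:wreath}. Your sufficiency argument is exactly that of Lemma \ref{le:wreath}, together with the (correct and necessary) observation that the isomorphism constructed there only uses $S_0+H=S_0$, not that $H$ is the full subgroup of periods $G_{S_0}$; this matters because reading ``$\langle n\rangle$-periodic'' as ``$G_{S_0}=\langle n\rangle$'' would make the lemma false (e.g.\ $K_{mn}\cong K_n\otimes K_m$ while $G_{S_0}=\ZZZ_{mn}$), so the intended meaning, which you implicitly and rightly adopt, is $S_0+\langle n\rangle=S_0$. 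The necessity direction is where your proposal supplies content absent from the paper, and it is sound: the relation $N[a]=N[b]$ is isomorphism-invariant; its classes in $\cay(\ZZZ_{mn},S)$ are the cosets of $G_{S_0}$, of common size $|G_{S_0}|$, while in $\Gamma\otimes K_m$ they are the sets $C\times V(K_m)$, of size $m|C|$; matching sizes gives $m\mid |G_{S_0}|$, and since a cyclic group has a unique subgroup of each order, with containment corresponding to divisibility of orders, this upgrades to $\langle n\rangle\le G_{S_0}$, i.e.\ $\langle n\rangle$-periodicity. Your closing remark correctly identifies the one trap --- the fibre partition of $\Gamma\otimes K_m$ is not canonical when $\Gamma$ has twin vertices, so fibres cannot simply be pulled back to cosets --- and the closed-neighbourhood relation is the right way around it. The net effect is that your argument makes the paper self-contained at this point, whereas the paper outsources both directions of the equivalence to Broere--Hattingh.
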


It follows from Lemmas \ref{le:lekum} and \ref{le:wreath} that, if $S_0$ is $H$-periodic, then every perfect code $C$ in $\Gamma = \cay(\ZZZ_n,S)$ can be obtained from a perfect code $D = \{d_1, \ldots, d_t\}$ in $\Si = \cay(\ph(\ZZZ_n), \ph(S)\setminus \{H\})$ and a collection of (not necessarily distinct) elements $h_1, \ldots, h_t \in H$ in such a way that $C=\{(d_1, h_1), \ldots, (d_t, h_t)\}$, and vice versa. Thus we can obtain $|H|^{t-1}$ distinct perfect codes containing $0$ in $\Gamma$ from a given perfect code $D$ in $\Sigma$. Moreover, for distinct perfect codes $D = \{d_1, \ldots, d_t\}$ and $D' = \{d'_1, \ldots, d'_t\}$ in $\Gamma$, we have $d'_i \notin D$ for some $1 \le i \le t$ and hence $\{(d_1, h_1), \ldots, (d_t, h_t)\} \ne \{(d'_1, h'_1), \ldots, (d'_t, h'_t)\}$ for any $h_1, \ldots, h_t \in H$ and $h'_1, \ldots, h'_t \in H$. Therefore, if there are exactly $c(\Si)$ distinct perfect codes containing $0$ in $\Si$, then there are exactly $c(\Si)\cdot |H|^{t-1}$ distinct perfect codes containing $0$ in $\Gamma$. Moreover, $\pi_{H} (S_0)$ is aperiodic as $H$ is the subgroup of periods of $S_0$.

\begin{lemma}
\label{le:leqas}
Let $G$ be an abelian group. Let $X$ be a pyramidal set of $G$ with
\begin{equation*}
\iota: H_0<H_1<\cdots<H_{2t-1} \le H_{2t} = \ZZZ_n
\end{equation*}
as an associated admissible subgroup series. Then the following statements hold:
\begin{enumerate}[\rm (a)]
\item
if $H_0=\{0\}$ and $X$ is aperiodic, then for each $0 \le i \le t-1$, $X/H_{2i}$ is a pyramidal set of $G/H_{2i}$ with
\begin{equation}
\label{eq:iota1}
\iota': H_{2i}/H_{2i}<H_{2i+1}/H_{2i}<\cdots<H_{2t-1}/H_{2i} \le H_{2t}/H_{2i} = \ZZZ_n/H_{2i}
\end{equation}
as an associated admissible subgroup series;
\item
if $H_0\neq\{0\}$ and $X$ is $H_0$-periodic, then for each $0 \le i \le t-2$, $X/H_{2i+1}$ is a pyramidal set of $G/H_{2i+1}$ with
\begin{equation}
\label{eq:iota2}
\iota'': H_{2i+2}/H_{2i+1}<H_{2i+3}/H_{2i+1}<\cdots<H_{2t-1}/H_{2i+1} \le H_{2t}/H_{2i+1} = \ZZZ_n/H_{2i+1}
\end{equation}
as an associated admissible subgroup series.
\end{enumerate}
\end{lemma}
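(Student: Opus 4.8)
The plan is to verify, in each case, that the truncated series meets the three defining conditions (T1)--(T3) of a pyramidal set, transferring each condition from $\iota$ through the canonical isomorphisms $(G/H_{2i})/(H_k/H_{2i})\cong G/H_k$ furnished by the third isomorphism theorem. Under such an isomorphism the set $(X/H_{2i})/(H_k/H_{2i})$ is carried to $X/H_k$ and the subgroup $(H_{k+1}/H_{2i})/(H_k/H_{2i})$ to $H_{k+1}/H_k$; since a group isomorphism preserves difference sets, intersections, cardinalities, periodicity and subgroups of periods, each instance of (T1)--(T3) for the truncated series becomes exactly an instance of (T1)--(T3) for $\iota$ at a shifted index, which holds by hypothesis. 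The one point that needs a genuine argument, and which I expect to be the crux, is the aperiodicity needed to place the truncated objects in case (a) of the definition.

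For part (a) the series $\iota'$ begins at $H_{2i}/H_{2i}=\{0\}$, so I must show $X/H_{2i}$ is aperiodic. For $i=0$ this is the hypothesis. For $1\le i\le t-1$ I would use condition (T2), read as asserting that the subgroup of periods of $X/H_{2i-1}$ is \emph{exactly} $H_{2i}/H_{2i-1}$ (this is the meaning of ``$(H_{2i}/H_{2i-1})$-periodic'' in the paper's convention). Quotienting any set by its full subgroup of periods yields an aperiodic set, and since $(X/H_{2i-1})/(H_{2i}/H_{2i-1})\cong X/H_{2i}$, this gives aperiodicity of $X/H_{2i}$. With aperiodicity in hand I would set $t'=t-i$, relabel $H_j':=H_{2i+j}/H_{2i}$, and check that (T1) for $0\le j\le t'-1$, (T2) for $1\le j\le t'$, and (T3) for $\iota'$ correspond under the isomorphisms above to (T1) for $i\le k\le t-1$, (T2) for $i+1\le k\le t$, and (T3) for $\iota$; the only care needed is the index bookkeeping and the observation that the terminal relation $|X/H_{2t-1}|=|G/H_{2t-1}|$ is common to both series. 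Since $0\in X/H_{2i}$ is inherited trivially, this places $X/H_{2i}$ in case (a) with series $\iota'$.

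For part (b) I would reduce to part (a). Writing $\widetilde{G}=G/H_0$, $\widetilde{X}=X/H_0$ and $\widetilde{H}_j=H_j/H_0$, the hypothesis that $X$ is $H_0$-periodic and $X/H_0$ satisfies (T1)--(T3) means precisely that $\widetilde{X}$ is an aperiodic pyramidal set of $\widetilde{G}$ with admissible series $\{0\}=\widetilde{H}_0<\cdots<\widetilde{H}_{2t}=\widetilde{G}$. Applying part (a) to $\widetilde{X}$ at index $i+1$ (legitimate for $0\le i\le t-2$, since then $1\le i+1\le t-1$) and translating back through $\widetilde{X}/\widetilde{H}_{2i+2}\cong X/H_{2i+2}$ shows that $X/H_{2i+2}$ is an aperiodic pyramidal set of $G/H_{2i+2}$ with series $H_{2i+2}/H_{2i+2}<\cdots<G/H_{2i+2}$. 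On the other hand, (T2) applied at index $i+1$ says that $X/H_{2i+1}$ has subgroup of periods exactly $H_{2i+2}/H_{2i+1}\ne\{0\}$, and its quotient by this subgroup is isomorphic to $X/H_{2i+2}$. These two facts are precisely the requirements of case (b) for $X/H_{2i+1}$ with respect to $\iota''$, because shifting $\iota''$ by its initial term $H_{2i+2}/H_{2i+1}$ reproduces the series just obtained for $X/H_{2i+2}$. Hence $X/H_{2i+1}$ is pyramidal with associated admissible series $\iota''$.

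The main obstacle is conceptual rather than computational: one must read (T2) as fixing the subgroup of periods exactly, and then invoke the standard fact that a set modulo its subgroup of periods is aperiodic; without this the even-level quotients need not be aperiodic and the claimed placement in case (a) would fail (indeed one can build sets satisfying all conditions except the exactness in (T2) for which $X/H_{2i}$ is periodic). Everything else is routine bookkeeping with the third isomorphism theorem, taking care that the strict inclusions of $\iota$ survive in the quotients and that the lengths $2(t-i)$ and $2(t-i-1)$ of $\iota'$ and $\iota''$ remain positive in the stated index ranges.
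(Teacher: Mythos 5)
Your proposal is correct and follows essentially the same route as the paper's proof: establish aperiodicity of the even-level quotients by reading (T2) as fixing the subgroup of periods exactly (so that quotienting by it yields an aperiodic set), then transfer (T1)--(T3) through the third-isomorphism identifications $(X/H_{2i})/(H_k/H_{2i}) = X/H_k$, handling part (b) by passing to $G/H_0$ and reusing the aperiodic case. Your formal reduction of (b) to (a) is just a tidier packaging of what the paper does when it says ``similarly to part (a),'' so there is nothing substantively different to flag.
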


\begin{proof}
Since $X$ is a pyramidal set of $G$, we have $0 \in X$.

(a) Since $H_0=\{0\}$ and $X$ is an aperiodic pyramidal set of $G$ with $\iota$ as an associated admissible subgroup series, by (T2), for $0 \le i \le t-1$, $X/H_{2i-1}$ is $(H_{2i}/H_{2i-1})$-periodic and so $X/H_{2i}$ is aperiodic. By (T1), we have $\left((X/H_{2i})/(H_{2j}/H_{2i})-(X/H_{2i})/(H_{2j}/H_{2i})\right)\cap\left((H_{2j+1}/H_{2i})/(H_{2j}/H_{2i})\right) = (X/H_{2j}-X/H_{2j})\cap(H_{2j+1}/H_{2j}) = \{0\}$ for $i\leq j\le t-1$. By (T2), for $i+1\le j\le t$, $(X/H_{2i})/(H_{2j-1}/H_{2i}) = X/H_{2j-1}$ is $(H_{2j}/H_{2j-1})$-periodic, so $(X/H_{2i})/(H_{2j-1}/H_{2i})$ is $((H_{2j}/H_{2i})/(H_{2j-1}/H_{2i}))$-periodic. Moreover, for $i+1\le j\le t-1$, we have $|(X/H_{2i})/(H_{2j}/H_{2i})|$ $=|X/H_{2j}|=|X/H_{2j+1}|=|(X/H_{2i})/(H_{2j+1}/H_{2i})|$ and $|(X/H_{2i})/(H_{2t-1}/H_{2i})|=|X/H_{2t-1}|=|G/H_{2t-1}|=|(G/H_{2i})/(H_{2t-1}/H_{2i})|$. Thus $\iota'$ satisfies (T1)--(T3) and so for any $0 \le i \le t-1$, $X/H_{2i}$ is a pyramidal set of $G/H_{2i}$ with $\iota'$ as an associated admissible subgroup series.

(b) Since $H_0\neq\{0\}$ and $X$ is an $H_0$-periodic pyramidal set of $G$ with $\iota$ as an associated admissible subgroup series, by (T2), for $0 \le i \le t-2$, $X/H_{2i+1}$ is $(H_{2i+2}/H_{2i+1})$-periodic and $H_{2i+2}/H_{2i+1}\neq\{0\}$. Note that $(H_{k}/H_{2i+1})/(H_{2i+2}/H_{2i+1})=H_{k}/H_{2i+2}$ for $2i+1 \le k \le 2t$. Similarly to part (a), using the definition of a pyramidal set, we can show that $X/H_{2i+1}$ is a pyramidal set of $G/H_{2i+1}$ with $\iota''$ as an associated admissible subgroup series.
\end{proof}

\begin{lemma}
\label{lem:iota}
Let $n=p^\ell m$, where $p$ is a prime and $\ell\geq 1, m\geq 2$ are integers. Let $\cay(\ZZZ_n,S)$ be a connected circulant graph with order $n$ and degree $p^\ell - 1$ such that $S_0$ is pyramidal. Then every admissible subgroup series
\begin{equation}
\label{eq:iota}
\iota: H_0<H_1<\cdots<H_{2t-1} \le H_{2t} = \ZZZ_n
\end{equation}
of $\ZZZ_n$ associated with $S_0$ (where $t \ge 1$) determines two sequences of positive integers
\begin{equation}
\label{eq:lm}
\mathbf{l}_{\iota} = (\ell_1, \ell_2, \ldots, \ell_{t}),\; \mathbf{m}_{\iota} = (m_1, m_2, \ldots, m_{t})
\end{equation}
with the following properties:
\begin{enumerate}[\rm (a)]
\item
$\prod_{i=1}^t m_i$ divides $m$;
\item
$\sum_{i=1}^t \ell_i = \ell$ if $S_0$ is aperiodic, and $\sum_{i=1}^t \ell_i < \ell$ if $S_0$ is periodic;
\item
for $1 \leq i \leq t$, we have
\begin{equation}
\label{eq:H}
|H_{2i}/H_{2i-1}|=p^{\ell_i},\;\, |H_{2i-1}/H_{2i-2}|=m_i;
\end{equation}
\item
$\cay(\ZZZ_n/H_{2t-1}, (S_0/H_{2t-1})\setminus \{H_{2t-1}\}) \cong K_{p^{\ell_t}}$.
\end{enumerate}
\end{lemma}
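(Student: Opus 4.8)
The plan is to read $\mathbf{l}_\iota$ and $\mathbf{m}_\iota$ straight off the index data of $\iota$, setting $m_i := |H_{2i-1}/H_{2i-2}|$ and defining $\ell_i$ through $|H_{2i}/H_{2i-1}| = p^{\ell_i}$, and then to verify that this latter index really is a power of $p$ and that the exponents and cofactors behave as claimed. First I would fold the periodic case into the aperiodic one by passing to a quotient. In case (b) of Definition \ref{def:pyra}, $S_0$ is $H_0$-periodic, hence a union of cosets of $H_0$, so $|H_0|$ divides $|S_0| = p^\ell$ and $|H_0| = p^{\ell_0}$ for some $\ell_0 \ge 1$; put $\bar G := \ZZZ_n/H_0$, $\bar S_0 := S_0/H_0$ (so $|\bar S_0| = p^{\ell - \ell_0}$) and $\bar H_j := H_j/H_0$, with $\bar H_0 = \{0\}$. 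By the third isomorphism theorem $\bar H_{2i}/\bar H_{2i-1} \cong H_{2i}/H_{2i-1}$ and $\bar H_{2i-1}/\bar H_{2i-2} \cong H_{2i-1}/H_{2i-2}$, and $\bar S_0$ satisfies (T1)--(T3) for $(\bar H_j)$ by the definition of the periodic case; so it suffices to prove all the conclusions for $\bar S_0$ in $\bar G$. This is exactly the aperiodic situation with $|S_0|$ replaced by $p^a$, $a := \ell - \ell_0$, the original aperiodic case (a) being recovered by $\ell_0 = 0$, $a = \ell$, $\bar G = \ZZZ_n$.

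The heart of the argument is an induction tracking $d_j := |S_0/H_j|$ along the series using (T1) and (T2). By (T1) the projection $\ZZZ_n/H_{2i} \to \ZZZ_n/H_{2i+1}$ is injective on $S_0/H_{2i}$, so the even-to-odd steps preserve size, $d_{2i} = d_{2i+1}$; by (T2) the set $S_0/H_{2i-1}$ is a union of cosets of $H_{2i}/H_{2i-1}$, so the odd-to-even steps divide it, $d_{2i} = d_{2i-1}/|H_{2i}/H_{2i-1}|$. Starting from $d_0 = |S_0/H_0| = p^a$, the crucial observation is that, since $S_0/H_{2i-1}$ is a union of cosets of $H_{2i}/H_{2i-1}$, the order $|H_{2i}/H_{2i-1}|$ divides $d_{2i-1}$; as $d_{2i-1}$ is a power of $p$ by induction, so is $|H_{2i}/H_{2i-1}| = p^{\ell_i}$, and every $d_j$ stays a power of $p$. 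Iterating yields $d_{2t-1} = p^{a - (\ell_1 + \cdots + \ell_{t-1})}$, and (T3) identifies this with $|\ZZZ_n/H_{2t-1}| = |H_{2t}/H_{2t-1}| = p^{\ell_t}$; this simultaneously proves (c) and gives $\sum_{i=1}^t \ell_i = a$, which is (b) (since $a = \ell$ exactly when $S_0$ is aperiodic and $a = \ell - \ell_0 < \ell$ when periodic). Finally, multiplicativity of the index gives $|\bar G| = \prod_{i=1}^t |H_{2i-1}/H_{2i-2}|\,|H_{2i}/H_{2i-1}| = \big(\prod_i m_i\big)\, p^a$; comparing with $|\bar G| = |\ZZZ_n|/|H_0| = p^a m$ forces $\prod_i m_i = m$, proving (a), in fact with equality.

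For (d), condition (T3) gives $|S_0/H_{2t-1}| = |\ZZZ_n/H_{2t-1}|$, so $S_0/H_{2t-1} = \ZZZ_n/H_{2t-1}$; hence $(S_0/H_{2t-1}) \setminus \{H_{2t-1}\}$ is the set of all non-identity elements of $\ZZZ_n/H_{2t-1}$, and $\cay(\ZZZ_n/H_{2t-1}, (S_0/H_{2t-1}) \setminus \{H_{2t-1}\})$ is the complete graph on $|\ZZZ_n/H_{2t-1}| = p^{\ell_t}$ vertices, i.e.\ $K_{p^{\ell_t}}$. It remains to confirm positivity. Since $\iota$ is strictly increasing below $H_{2t-1}$, we have $m_i = |H_{2i-1}/H_{2i-2}| \ge 2$ for $1 \le i \le t$ and $p^{\ell_i} = |H_{2i}/H_{2i-1}| \ge 2$ for $1 \le i \le t-1$. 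The only delicate point — and the sole place the connectivity hypothesis enters — is $\ell_t \ge 1$, i.e.\ $H_{2t-1} \ne \ZZZ_n$: were $H_{2t-1} = H_{2t} = \ZZZ_n$, then $d_{2t-1} = 1$ and, as the last even-to-odd step preserves size, $d_{2t-2} = 1$, so $S_0$ would lie in a single coset of $H_{2t-2}$; since $0 \in S_0$ this gives $S_0 \subseteq H_{2t-2} < \ZZZ_n$, contradicting $\langle S_0 \rangle = \ZZZ_n$.

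The main obstacle is making the $p$-power bookkeeping airtight: one must argue carefully that (T2)-periodicity at every odd level forces $|H_{2i}/H_{2i-1}|$ to be a power of $p$ and that (T1) keeps the counts consistent throughout, so that the single equality supplied by (T3) pins down $\ell_t$ (and hence $\sum_i \ell_i$) exactly. Once this induction is in place, the sum and product identities and conclusion (d) follow immediately; the only genuinely separate ingredient is the connectivity argument that excludes a degenerate final step and guarantees $\ell_t \ge 1$.
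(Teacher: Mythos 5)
Your proposal is correct, but it takes a genuinely different route from the paper's own proof. The paper argues by induction on $t$: in the inductive step it invokes Theorem \ref{th:thplp} (proved only later, in Section \ref{sec:char}), Lemma \ref{le:wreath} and the wreath-product criterion of Reji Kumar and MacGillivray to produce a perfect code $D$ in $\cay(\ZZZ_n/H_1,(S_0/H_1)\setminus\{H_1\})$, then uses Lemma \ref{le:lesdtgd} and (T1) to obtain the factorization $\ZZZ_n=S_0\oplus(D+H_1)$, which is how it deduces that $|H_1|$ divides $m$; it then passes to $\ZZZ_n/H_2$ via Lemma \ref{le:leqas} and applies the induction hypothesis, obtaining only the divisibility in part (a). You instead track the single quantity $d_j=|S_0/H_j|$ along the series: (T1)/(T3) make the even-to-odd steps size-preserving, (T2) makes each odd-to-even step divide $d_{2i-1}$ by $|H_{2i}/H_{2i-1}|$, and since $d_0$ is a $p$-power this forces every $|H_{2i}/H_{2i-1}|$ to be a $p$-power, with the one equality in (T3) pinning down $\ell_t$ and hence $\sum_i\ell_i$; multiplicativity of indices along the chain then gives (a), and connectivity rules out the degenerate case $H_{2t-1}=H_{2t}$ (i.e.\ $\ell_t=0$), a point the paper leaves implicit. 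Your route buys three things: it is elementary and self-contained (no perfect codes, no wreath products, and no forward dependence on Theorem \ref{th:thplp}, which in the paper creates an awkward ordering since this lemma sits in the preliminaries); it yields the sharper conclusion $\prod_{i=1}^{t}m_i=m$ with equality rather than mere divisibility; and it isolates exactly where connectivity is needed. What the paper's heavier argument buys is mainly structural coherence: it exercises the quotient and lifting machinery (Lemmas \ref{le:leqas}, \ref{le:lesdtgd}, \ref{le:wreath}) that reappears in Constructions \ref{const:2} and \ref{const:3} and in the proofs of Theorems \ref{th:th2.2.1} and \ref{cor:cor2.2}, so the lemma's proof doubles as a warm-up for those later arguments.
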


\begin{proof}
We prove the result by induction on $t$. Assume $t=1$ first. Then $\iota: H_0<H_1\le H_{2} = \ZZZ_n$. Since $\iota$ is an admissible subgroup series of $\ZZZ_n$ associated with $S_0$, either (i) $H_0=\{0\}$ and $S_0$ is aperiodic, or (ii) $H_0\neq \{0\}$ and $S_0$ is $H_0$-periodic. In the former case, by (T3), we have $|S_0/H_1|=|\ZZZ_n/H_1|$ and $|S_0/H_0|=|S_0/H_1|$. Since $H_0=\{0\}$, it follows that $p^\ell=|S_0|=|S_0/H_0|=|S_0/H_1|=|\ZZZ_n/H_1|$, and so $H_1=\langle p^\ell\rangle$ and $|H_1|=m$. Set $\ell_1=\ell$ and $m_1=m$. Then $\mathbf{l}_{\iota} = (\ell)$ and $\mathbf{m}_{\iota} = (m)$. By (T2) and (T3), we obtain that $S_0/H_1$ is $(\ZZZ_n/H_1)$-periodic, which implies that $\ZZZ_n/H_1$ is a subset of $S_0/H_1$. Since $S_0\subseteq \ZZZ_n$, it follows that $S_0/H_1=\ZZZ_n/H_1$. Therefore, $S_0/H_1$ is a group with order $p^\ell$ and consequently $\cay(\ZZZ_n/H_1,(S_0/H_1)\setminus \{H_1\})\cong K_{p^\ell}$ as desired.

Now let us consider the case where $H_0\neq \{0\}$ and $S_0$ is $H_0$-periodic. In this case there is an  integer $\ell_1 < \ell$ such that $H_0=\langle p^{\ell_1} m\rangle$ and $S_0/H_0$ is a pyramidal set of $\ZZZ_n$ with $\{0\}=H_0/H_0<H_1/H_0\leq H_2/H_0=\ZZZ_n/H_0\cong \ZZZ_{p^{\ell_1} m}$ as an associated admissible subgroup series. Set $L_0=H_0/H_0$, $L_1=H_1/H_0$ and $L_2=\ZZZ_n/H_0$. Since $S_0/H_0$ is aperiodic and $L_0=\{0\}$, we can use what we have proved above in case (i) to obtain $\mathbf{l}_{\iota} = (\ell_1)$ and $\mathbf{m}_{\iota} = (m)$. We have $|H_2/H_1|=|L_2/L_1|=p^{\ell_1}$ and $|H_1/H_0|=|L_1/L_0|=m$. Similarly to case (i), we can obtain that $\cay(\ZZZ_n/L_1, S_{0}/L_1\setminus\{L_1\}) \cong K_{p^{\ell_1}}$.

Suppose that for some integer $k > 1$ the result is true for all $t < k$. We aim to prove that the result is also true for $t=k$. We distinguish between the following two cases.

\smallskip
\textsf{Case 1.} $H_0=\{0\}$ and $S_0$ is aperiodic.
\smallskip

By (T2), $S_0/H_1$ is $(H_2/H_1)$-periodic. Since $H_0=\{0\}$, by (T3), $|S_0/H_1|=|S_0/H_{0}|=|S_0|=p^\ell$. Thus there is an integer $a<\ell$ such that $|H_2/H_1|=p^a$. Since $S_0/H_1$ is $(H_2/H_1)$-periodic, by Theorem \ref{th:thplp}, Lemma \ref{le:wreath} and \cite[Proposition 3.2]{kum13}, $\cay(\ZZZ_n/H_1, S_{0}/H_1\setminus\{H_1\})$ also admits perfect codes. Let $D$ be a perfect code in $\cay(\ZZZ_n/H_1,S_0/H_1\setminus\{H_1\})$. By Lemma \ref{le:lesdtgd} and (T1), we have $\ZZZ_n=S_0\oplus (D+H_1)$. Thus $|H_1|$ is a divisor of $m$. Set $|H_1|=m'$. Then $|H_1/H_0|=|H_1|=m'$, and so $H_1=\langle p^\ell m/m'\rangle$. This together with $|H_2/H_1|=p^a$ implies that $H_2=\langle p^{\ell-a}m/m'\rangle$.

Now by Lemma \ref{le:leqas}, $S_0/H_2$ is a pyramidal set of $\ZZZ_n/H_2$, and
$$
\iota': H_{2}/H_{2}<H_{3}/H_{2}<\cdots<H_{2k-1}/H_{2} \le H_{2k}/H_{2} = \ZZZ_n/H_{2}
$$
is an associated admissible subgroup series. Thus, by Theorem \ref{th:thplp}, $\cay(\ZZZ_n/H_2,S_0/H_2\setminus\{H_2\})$ admits perfect codes. By the induction hypothesis, we have two sequences of positive integers
$$
\mathbf{l'}_{\iota} = (a_1, a_2, \ldots, a_{k-1}),\; \mathbf{m'}_{\iota} = (b_1, b_2, \ldots, b_{k-1})
$$
with $\sum_{i=1}^k a_i = \ell-a$ and $\prod_{i=1}^k b_i$ dividing $m/m'$ such that
$$
|H_{2i}/H_{2i-1}|=p^{a_i},\;\, |H_{2i-1}/H_{2i-2}|=b_i
$$
for $2 \leq i \leq k$ and $\cay(\ZZZ_n/H_{2k-1}, (S_0/H_{2k-1})\setminus \{H_{2k-1}\}) \cong K_{p^r}$, where $r = a_{k-1}$. Set
$$
\mathbf{l}_{\iota} = (a,a_1, a_2, \ldots, a_{k-1}),\; \mathbf{m}_{\iota} = (m', b_1, b_2, \ldots, b_{k-1}).
$$
Then $\mathbf{l}_{\iota}$ and $\mathbf{m}_{\iota}$ satisfy \eqref{eq:H}. So we have proved that the result is true for $t=k$ when $H_0=\{0\}$ and $S_0$ is aperiodic.

\smallskip
\textsf{Case 2.} $H_0\neq\{0\}$ and $S_0$ is $H_0$-periodic.
\smallskip

In this case,
$$
\iota'': H_{0}/H_{0}<H_{1}/H_{0}<\cdots<H_{2k-1}/H_{0} \le H_{2k}/H_{0} = \ZZZ_n/H_{0}
$$
is an admissible subgroup series associated with $S_0$. Since $S_0$ is $H_0$-periodic, there is an integer $1\leq a<\ell$ such that $H_{0}=\langle p^{\ell-a}m\rangle$ and $|S_0/H_0|=p^{\ell-a}$. Thus, by what we proved in Case 1, we have two sequences of positive integers
$$
\mathbf{l}_{\iota} = (\ell_1, \ell_2, \ldots, \ell_{t}),\; \mathbf{m}_{\iota} = (m_1, m_2, \ldots, m_{t})
$$
with $\sum_{i=1}^t \ell_i = \ell-a$ and $\prod_{i=1}^t m_i$ dividing $m$ such that
$$
|H_{2i}/H_{2i-1}|=p^{\ell_i},\;\, |H_{2i-1}/H_{2i-2}|=m_i
$$
for $1 \leq i \leq t$ and $\cay(\ZZZ_n/H_{2k-1}, (S_0/H_{2k-1})\setminus \{H_{2k-1}\}) \cong K_{p^{\ell_t}}$. Since $\ell-a<\ell$, we have $\sum_{i=1}^t \ell_i < \ell$. Moreover, $\mathbf{l}_{\iota}$ and $\mathbf{m}_{\iota}$ satisfy \eqref{eq:H}. So we have proved that the result is true for $t=k$ when $H_0\neq\{0\}$ and $S_0$ is $H_0$-periodic.

By mathematical induction, we have completed the proof.
\end{proof}


\section{Proof of Theorem \ref{th:thplp}}
\label{sec:char}

\begin{Proof}~\ref{th:thplp}.
Throughout the proof we assume that $G = \ZZZ_n$ and $\Gamma =\cay (\ZZZ_n, S)$ is a connected non-complete circulant graph with order $n \ge 3$ and degree $|S| = p^{\ell}-1$, where $p$ is a prime and $\ell \ge 1, m \ge 2$ are integers. We prove the necessary and sufficient condition first.

\smallskip
\textsf{Necessity.}~Suppose that $\Gamma$ admits a perfect code, say, $D$, so that $G=S_0\oplus D$. Then by Lemma \ref{le:ledirectsum}, $|S_0| = p^{\ell}$ is a divisor of $n$. Write $n = p^{\ell}m$. We will prove by induction on $\ell$ that $S_0$ is a pyramidal set of $G$. The base case $\ell=1$ is ensured by Theorem \ref{le:ledeng17p}. In fact, if $S_0$ is periodic with size $p$, then it is the subgroup of $G$ with order $p$ and so cannot be a generating set of $G$ as $m \ge 2$. Thus, $S_0$ is aperiodic and pyramidal with subgroup sequence $\{0\}=H_0<H_1<H_2=G$, where $H_1=\langle p\rangle$.

Suppose $\ell>1$. By Theorem \ref{le:leobr07}, $\Gamma$ admits a perfect code $H$ which is a subgroup of $G$ if and only if $H=\langle p^{\ell}\rangle$ and $(S_0-S_0)\cap H=\{0\}$. In this case $S_0$ is a pyramidal set with admissible subgroup series $\{0\}=H_0<H_1<H_2=G$, where $H_1=\langle p^{\ell}\rangle$. It remains to consider the case where $(S_0-S_0)\cap \langle p^{\ell}\rangle\neq \{0\}$. We claim that $\gcd(p, m)\neq 1$ in this case. Suppose otherwise. Then by Lemma \ref{le:lesandsbook} there are subgroups $H, K$ of $G$ such that $|H|=|S_0|=p^{\ell}$, $|K|=|D|=m$, and $G=S_0\oplus D=H\oplus K=S_0\oplus K$. Since the sum $S_0\oplus K$ is direct,  the elements of $S_0$ are pairwise distinct modulo $K$, and $K$ is the cyclic group of $G$ generated by $p^{\ell}$. Therefore, $(S_0-S_0)\cap \langle p^{\ell}\rangle=\{0\}$, but this is a contradiction. Hence $\gcd(p, m)\neq 1$ as claimed.
	
\smallskip
\textsf{Case 1.} $S_0$ is periodic.
\smallskip

The subgroup of periods of $S_0$ must be of the form $H=\langle p^{\ell_1}m\rangle$ for some $\ell_1<\ell$. Hence $|S_0/H|=p^{\ell_1}$. By Theorem \ref{le:leobr07} and Lemma \ref{le:ledeng1424}, $G/H=(S_0/H) \oplus (D/H)$ and $(D-D)\cap H=\{0\}$.  So, by the induction hypothesis, $S_0/H$ is a pyramidal set of $G/H$. Therefore, $S_0$ is a pyramidal set of $G$ by definition.

\smallskip
\textsf{Case 2.} $S_0$ is aperiodic.
\smallskip

In this case, by Lemma \ref{thm:primepower}, $D$ must be periodic, say,  $D=D_1\oplus K$ for some subgroup $K$ with order $m_1$ dividing $m$. By Theorem \ref{le:leobr07}, $G/K = (S_0/K) \oplus (D/K)$ and $(S_0-S_0)\cap K=\{0\}$. The second condition implies that $|S_0/K|=p^{\ell}$. By a further application of Lemma \ref{thm:primepower}, we see that $S_0/K$ is periodic, say, with subgroup of periods $H/K$. Thus, by what we have proved in Case 1, $S_0/K$ is a pyramidal set of $G/K$. Hence, if $H/K$ is the subgroup of periods of $S_0/K$, where $H$ is a subgroup containing $K$, then $S_0/H$ is an aperiodic pyramidal set of $G/H$. Let $\{0\}<H_1/H<\cdots <H_{2t-1}/H< H_{2t}/H = G/H$ be an admissible subgroup series of $G/H$ associated with $S_0/H$. Then $\{0\}<K<H<H_1<\cdots <H_{2t-1}\leq H_{2t}=G$ is an admissible subgroup series of $G$ associated with $S_0$ and therefore $S_0$ is a pyramidal set of $G$.

\smallskip
\textsf{Sufficiency.}~Suppose that $p^{\ell}$ divides $n$ and $S_0$ is pyramidal. Then $n = p^{\ell}m$ for some integer $m \ge 2$. (Note that $m \ne 1$ for otherwise $\Ga$ would be a complete graph, which contradicts our assumption.) Let $H_0<H_1<\cdots <H_{2t-1} \leq H_{2t} = G$ be an admissible subgroup series of $G$ associated with $S_0$. To prove that $\Ga$ admits a perfect code, it suffices to prove the existence of a subset $D$ of $G$ such that $G = S_0\oplus D$.

Suppose first that $S_0$ is aperiodic. Then $H_0 = \{0\}$. If $t=1$, then, by (T1), $(S_0-S_0)\cap H_1=0 $. In particular, $|S_0/H_1|=|S_0|$ and, by (T3), $n=|S_0/H_1|\cdot |H_1|=|S_0|\cdot |H_1|$. Therefore, the sum $S_0\oplus H_1$ is direct and is equal to $G$, completing the proof in this case. Assume $t > 1$. By (T3), $|G/H_{2t-1}|=|S_0/H_{2t-1}|$. So $|(G/H_{2t-2})/(H_{2t-1}/H_{2t-2})|=|G/H_{2t-1}|=|S_0/H_{2t-1}|$.
Again, by (T3), $|G/H_{2t-2}| = |S_0/H_{2t-1}||H_{2t-1}/H_{2t-2}|=|S_0/H_{2t-2}||H_{2t-1}/H_{2t-2}|$. Combining (T1) and Lemma \ref{le:lesdtgd}, we obtain that $G/H_{2t-2} = (S_0/H_{2t-2}) \oplus (H_{2t-1}/H_{2t-2})$. Using Theorem \ref{le:legraphiso} and (T2), we see that there exists a subset $D\subset G$ such that $G/H_{2t-3} = (S_0/H_{2t-3}) \oplus (D/H_{2t-3})$. Moreover,
$$
(G/H_{2t-4})/(H_{2t-3}/H_{2t-4}) = \left((S_0/H_{2t-4})/(H_{2t-3}/H_{2t-4})\right) \oplus \left((D/H_{2t-4})/(H_{2t-3}/H_{2t-4})\right).
$$
Again, using Lemma \ref{le:lesdtgd} and (T1), we obtain that $G/H_{2t-4} = (S_0/H_{2t-4}) \oplus ((D/H_{2t-4}) + (H_{2t-3}/H_{2t-4}))$. Since $t$ is finite, repeating this process we can eventually obtain a subset $D' \subset G$ containing $0$ such that $G/H_1 = (S_0/H_1) \oplus (D'/H_1)$, and a further application of Theorem \ref{le:leobr07} using (T1) with $i=1$ gives $G = S_0\oplus (D'+H_1)$ as desired.

In the case when $S_0$ is period, the proof is essentially the same as above with $G/H_0, S_0/H_0$ playing the role of $G, S_0$, respectively, where $H_0$ is the subgroup of periods of $S_0$ in $G$.

Up to now we have proved the necessary and sufficient condition in Theorem \ref{th:thplp}.

We now prove (a) and (b). Suppose that $p^{\ell}$ divides $n$, $S_0$ is pyramidal and aperiodic, and the longest admissible subgroup series associated with $S_0$ is $H_0 < H_1 < H_2 < \dots < H_{2t-1} \le H_{2t} = \ZZZ_n$. Then $n = p^{\ell} m$, where $m \ge 2$ as $\Ga$ is non-complete, and $H_0=\{0\}$ as $S_0$ is aperiodic. By what we have proved above, $\Gamma$ admits perfect codes.

(a) Assume $(S_0-S_0)\cap\langle p^\ell \rangle=\{0\}$. Consider the case $t=1$ first. In this case we have $H_1 = \langle p^\ell \rangle$ and $\cay(\ZZZ_n/H_1, (S_0/H_1) \setminus \{H_1\})$ is a complete graph with $0+H_1$ as the unique perfect code containing the identity element of $\ZZZ_n/H_1$. By Lemma \ref{le:ledeng1424}, we have $\ZZZ_n=S_0\oplus H_1$. Hence $H_1$ is the only perfect code of $\Ga$ containing $0$.

Now consider the case $t>1$. In this case $S_0/H_1$ is periodic and its subgroup of periods has order $p^{\ell_1}$ for some integer $0<\ell_1<\ell$. Thus, $\cay(\ZZZ_n/H_1, (S_0/H_1) \setminus \{H_1\})$ admits a perfect code $D/H_1$ which is not a subgroup of $\ZZZ_n/H_1$. So, by Theorem \ref{le:leobr07} and the definition of a pyramidal set, $\Ga$ admits $D+H_1$ as a perfect code which is not a subgroup of $\ZZZ_n$. On the other hand, since $(S_0-S_0)\cap\langle p^\ell\rangle=\langle 0\rangle$ and $|S_0|\cdot|\langle p^\ell\rangle|=p^\ell m$, $S_0+\langle p^\ell\rangle$ is a direct sum and $S_0\oplus\langle p^\ell\rangle=\ZZZ_n$ by Lemma \ref{le:ledirectsum}(c), and so $\langle p^\ell \rangle$ is a perfect code in $\Gamma$. Thus the subgroup $\langle p^\ell \rangle$ of $\ZZZ_n$ with order $n/p^{\ell}$ is a perfect code in $\Gamma$.

(b) Assume $(S_0-S_0)\cap\langle p^\ell \rangle\neq\{0\}$. Then by Theorem \ref{le:leobr07}, $\langle p^\ell \rangle$ is not a perfect code in $\Gamma$. Since any perfect code in $\Ga$ has size $n/p^\ell$ and $\langle p^\ell \rangle$ is the only subgroup of $\ZZZ_n$ with order $n/p^\ell$, it follows that $\Gamma$ admits no subgroup of $\ZZZ_n$ as a perfect code. However, $\Gamma$ admits perfect codes as mentioned earlier. Hence there are perfect codes in $\Ga$ but none of them is $\langle p^\ell \rangle$ .
\qed
\end{Proof}

We illustrate the two cases in Theorem \ref{th:thplp} by two examples.

\begin{example}
\label{ex:ex3.4}
Consider the circulant graph $\cay(\ZZZ_{3^2\cdot 10}, S)$ with order $3^2\cdot 10$ and degree $3^2 - 1$, where $S = \{1,5,6,7,83,84,85,89\}$. It can be verified that $S_0$ is aperiodic and $(S_0 - S_0) \cap \langle 3^2 \rangle = \{0\}$. Setting $H_0=\{0\}$, $H_1=\langle 18\rangle$, $H_2=\langle 6\rangle$ and $H_3=\langle 3\rangle$, we see that $S_0$ is a pyramidal set of $\ZZZ_{3^2\cdot 10}$ with $H_0<H_1<H_2<H_3 < H_4 = \ZZZ_{3^2\cdot 10}$ as an admissible subgroup series. One can verify that $\langle 3^2 \rangle$ is a perfect code in $\cay(\ZZZ_{3^2\cdot 10}, S)$ as guaranteed by part (a) of Theorem \ref{th:thplp}. Since $t = 2 > 1$ in this case, by part (a) of Theorem \ref{th:thplp} there should have perfect codes in $\cay(\ZZZ_{3^2\cdot 10}, S)$ containing $0$ which are different from $\langle 3^2 \rangle$. Indeed, this is the case as $D = \{0,3,18,21,36,39,54,57, 72,75\} \neq \langle 3^2 \rangle$ is such a perfect code in $\cay(\ZZZ_{3^2\cdot 10}, S)$. Note that $D$ is not equally spaced. One can verify that $\cay(\ZZZ_{3^2\cdot 10}, S)$ is not isomorphic to the wreath product of a smaller circulant graph admitting perfect codes and a complete graph with at least two vertices. This shows that there exist circulant graphs which admit a non-equally spaced perfect code but are not the wreath product of a smaller circulant graph admitting perfect codes and a complete graph with at least two vertices.

Setting $L_0 = \{0\}$ and $L_1 = \langle 9\rangle$, we see that $L_0 < L_1 < L_2 = \ZZZ_{3^2\cdot 10}$ is another admissible subgroup series associated with $S_0$. This shows that the same extended connection set can be pyramidal with respect to different admissible subgroup series.
\qed
\end{example}

\begin{example}
Let
\begin{align*}
S = \{1, 44, 45, 46, 224, 225, 226, 269, 270, 271, 314, 315, 316, 2114, 2115,\\  2116, 2159, 2160, 2161, 2204, 2205, 2206, 2384, 2385, 2386, 2429\}.
\end{align*}
Then $\cay(\ZZZ_{3^3\cdot 90}, S)$ is a circulant graph with order $3^3\cdot 90$ and degree $3^3 - 1$. We have $(S_0 - S_0) \cap \langle 3^3 \rangle \ne \{0\}$ and $S_0$ is aperiodic. It can be verified that $S_0$ is a pyramidal set of $\ZZZ_{3^3\cdot 90}$ with $H_0 < H_1 < H_2 < H_ 3 < H_4 < H_5 < H_6=\ZZZ_{3^3\cdot 90}$ as an admissible subgroup series, where $H_0=\{0\}$, $H_1=\langle 810\rangle$, $H_2=\langle 270\rangle$, $H_3=\langle135\rangle$, $H_4=\langle 45\rangle$ and $H_5=\langle 3\rangle$. Thus, by part (b) of Theorem \ref{th:thplp}, there are perfect codes in $\cay(\ZZZ_{3^3 \cdot 90}, S)$ containing $0$, but none of them is $\langle 3^3 \rangle$. It can be verified that
\begin{align*}
D=\{0, 3,6,9,12,15,18,21,24,27,30,33,36,39,42, 135, 138, 141, 144,\\ 147, 150, 153, 156, 159, 162, 165, 168, 171, 174, 177\}+\langle 810\rangle
\end{align*}
is such a perfect code.

Note that $\{0, 270\} \subset S_0$ and $270-0\equiv 0\pmod{3^3}$. So this example shows that a circulant graph $\cay(\ZZZ_n, S)$ with $|S_0|=p^\ell$ can admit perfect codes even if there are distinct elements $s, s'\in S_0$ such that $s-s'\equiv 0\pmod{p^\ell}$.
\qed
\end{example}


\section{Proof of Theorem \ref{th:th2.2.1}}
\label{sec:const}

A graph $\Si$ is a \emph{covering graph} of a graph $\Ga$ if there exists a surjective map $\rho: V(\Si) \rightarrow V(\Ga)$ such that for each $v \in V(\Si)$ the restriction of $\rho$ to the neighourhood of $v$ in $\Si$ is a bijection onto the neighbourhood of $\rho(v)$ in $\Ga$. Call $\rho$ a \emph{covering map} from $\Si$ to $\Ga$. When all fibres $\rho^{-1}(u)$, $u \in V(\Ga)$ have the same size, say, $k$, we also say that $\Si$ is a \emph{$k$-fold cover} of $\Ga$.

\begin{defn}
\label{def:fgm}
Let $\Si=\cay(\ZZZ_n, S)$ be a circulant graph. Let $m$ be a divisor of $n$ and $H = \langle n/m \rangle$ the unique subgroup of $\ZZZ_n$ with order $m$. If $(S_0-S_0)\cap H = \{0\}$, then define
$$
f_m(\Si)=\cay(\ZZZ_{n}/H, S/H);
$$
if $S_0$ is $H$-periodic, then define
$$
g_{m}(\Si)=\cay(\ZZZ_{n}/H, (S_0/H) \setminus \{H\}).
$$
\end{defn}

\begin{remark}
\label{rem:fgm}
(i) If $(S_0-S_0)\cap H = \{0\}$, then $f_m(\Si) \cong \cay(\ZZZ_{n/m},T)$, where $T = \{s \pmod {(n/m)}: s \in S_0\} \setminus \{0\} = \{s \pmod {(n/m)}: s \in S\}$. (Note that $|T| = |S/H| = |S|$.) Since $(S_0-S_0)\cap H = \{0\}$, each coset of $H$ in $\ZZZ_n$ is an independent set of $\Si$ and so $\Si$ is an $m$-fold cover of $f_m(\Si)$. We may view $f_m$ as the canonical projection from $\ZZZ_n$ to $\ZZZ_{n}/H$ which sends $x \in \ZZZ_n$ to $x+H \in \ZZZ_{n}/H$, so that $f_m$ is a covering map from $\Si$ to $f_m(\Si)$. If $\Si$ is connected, then so is $f_{m}(\Si)$.

(ii) If $S_0$ is $H$-periodic, then $g_{m}(\Si) \cong \cay(\ZZZ_{n/m},T)$, where $T = \{s \pmod {(n/m)}: s \in S_0\} \setminus \{0\}$. (Note that $|T| = (|S_0|/|H|)-1$.) Again, we may view $g_{m}$ as the canonical projection from $\ZZZ_n$ to $\ZZZ_{n}/H$ which sends $x \in \ZZZ_n$ to $x+H \in \ZZZ_{n}/H$, so that $g_{m}$ is a graph homomorphism from $\Si$ to $g_{m}(\Si)$.
\end{remark}

\begin{construction}
\label{const:2}
Let $n=p^\ell m$, where $p$ is a prime and $\ell\geq 1, m\geq 2$ are integers. Let $\Ga = \cay(\ZZZ_n,S)$ be a connected circulant graph with order $n$ and degree $p^\ell - 1$ such that $S_0$ is aperiodic and pyramidal. Let $\iota: H_0<H_1<\cdots<H_{2t-1} \le H_{2t} = \ZZZ_n$ be an admissible subgroup series associated with $S_0$, where $t \ge 1$ and $H_0 = \{0\}$. By Lemma \ref{lem:iota}, this series determines two sequences of positive integers (see \eqref{eq:lm}), namely $\mathbf{l}_{\iota} = (\ell_1, \ell_2, \ldots, \ell_{t})$ and $\mathbf{m}_{\iota} = (m_1, m_2, \ldots, m_{t})$, such that $\sum_{i=1}^t \ell_i = \ell$, $\prod_{i=1}^t m_i$ divides $m$, $|H_{2i}/H_{2i-1}|=p^{\ell_i}$ and $|H_{2i-1}/H_{2i-2}|=m_i$ for $1 \leq i \leq t$, and $\cay(\ZZZ_n/H_{2t-1}, (S_0/H_{2t-1})\setminus \{H_{2t-1}\}) \cong K_{p^{\ell_t}}$. Set
$$
\Gamma_i = \cay(\ZZZ_n/H_i, (S_0/H_i)\setminus \{H_i\})
$$
for $0 \leq i \leq 2t-1$. Then each $\Gamma_i$ is a connected circulant graph. We have $\Gamma_0 = \Ga$ (as $H_0 = \{0\}$) and $\Gamma_{2t-1} \cong K_{p^{\ell_t}}$.

Let
$$
f_{m_i}: \ZZZ_n/H_{2(i-1)}\to \ZZZ_n/H_{2i-1}
$$
for $1 \le i \le t$ and
$$
g_{p^{\ell_i}}: \ZZZ_n/H_{2i-1}\to \ZZZ_n/H_{2i}
$$
for $1 \le i \le t-1$ be the natural projections. Slightly abusing notation, for any graph $\Si$ with vertex set $\ZZZ_n/H_{2(i-1)}$ we use $f_{m_i}(\Si)$ to denote the graph with vertex set $\ZZZ_n/H_{2i-1}$ and adjacency relation inherited from $\Si$ (thus, $f_{m_i}$ becomes a graph homomorphism from $\Si$ to $f_{m_i}(\Si)$). For any graph $\Si$ with vertex set $\ZZZ_n/H_{2i-1}$, the graph $g_{p^{\ell_i}}(\Si)$ is similarly defined. In particular, $f_{m_i}(\Ga_{2(i-1)})$ is well defined for $1 \le i \le t$ and $g_{p^{\ell_i}}(\Ga_{2i-1})$ is well defined for $1 \le i \le t-1$. So we have the chain of graph homomorphisms:
$$
\Gamma_0 \xrightarrow{f_{m_1}} \Gamma_1 \xrightarrow{g_{p^{\ell_1}}} \Gamma_2 \xrightarrow{} \cdots \xrightarrow{} \Gamma_{2(t-2)} \xrightarrow{f_{m_{t-1}}} \Gamma_{2t-3} \xrightarrow{g_{p^{\ell_{t-1}}}} \Gamma_{2(t-1)} \xrightarrow{f_{m_t}} \Gamma_{2t-1}.
$$
This means that $\Ga$ can be reduced to the complete graph $\Gamma_{2t-1} \cong K_{p^{\ell_t}}$ through the $2t-1$ projections in this chain. We may present this fact as
$$
(f_{m_t} g_{p^{\ell_{t-1}}} f_{m_{t-1}} \cdots g_{p^{\ell_1}} f_{m_1})(\Ga) \cong K_{p^{\ell_t}}.
$$
Note that $f_{m_i}$ is a covering map from $\Ga_{2(i-1)}$ to $\Gamma_{2i-1}$ as $\iota$ and $S_0$ satisfy (T1)--(T3).
\qed
\end{construction}

An important remark is in order: Since $\iota$ and $S_0$ satisfy (T1)--(T3), one can verify that $f_{m_i}(\Si)$ and $g_{p^{\ell_i}}(\Si)$ defined in Construction \ref{const:2} are the same as $f_{m_i}(\Si)$ and $g_{p^{\ell_i}}(\Si)$ obtained from Construction \ref{const:3} below for appropriate $\Si$, respectively. So there is no ambiguity to use the graph homomorphisms $f_m$ and $g_m$ defined in Definition \ref{def:fgm}.

We now introduce two other maps which are right-inverses of $f_m$ and $g_m$, respectively.

\begin{defn}
\label{def:fgmbar}
Let $\Gamma = \cay(\ZZZ_n, S)$ and let $m \ge 1$ be an integer. A map $\sigma: S_0 \rightarrow \{0, 1, \ldots, m-1\}$ is said to be \emph{feasible} if $\sigma(0)=0$ and $-T(\sigma) = T(\sigma)$ in $\ZZZ_{mn}$, where $T(\sigma) = \{s+\sigma(s)n: s \in S\}$. Set $T(\sigma)_0 = T(\sigma) \cup \{0\} = \{s+\sigma(s)n:s\in S_0\}$.

Given a feasible map $\sigma: S_0 \rightarrow \{0, 1, \ldots, m-1\}$, define
$$
\Ga(\sigma) = \cay (\ZZZ_{mn}, T(\sigma)).
$$
Define
$$
\baf_{m}(\Gamma) = \{\Ga(\sigma): \s: S_0 \rightarrow \{0, 1, \ldots, m-1\} \text{ is feasible}\}
$$
$$
\baf_m(S_0) = \{T(\sigma)_0: \s: S_0 \rightarrow \{0, 1, \ldots, m-1\} \text{ is feasible}\}.
$$
Define
$$
\bag_{m}(\Gamma) = \cay (\ZZZ_{mn}, T)
$$
to be the Cayley graph of $\ZZZ_{mn}$ with extended connection set $T_0 = \{s+in: s\in S_0, 0 \le i \le m - 1\}$. Write $T_0 = \bag_{m}(S_0)$.

We often write $\baf_{m}(S)$, $\bag_{m}(S_0)$ in place of $\baf_{m}(\Gamma), \bag_{m}(\Gamma)$, respectively.
\end{defn}

\begin{remark}
\label{rem:fgmbar}
(i) By definition $\baf_m(S_0)$ is the family of extended connection sets of the circulant graphs in $\baf_{m}(\Gamma)$. More specifically, $\cay (\ZZZ_{mn}, T) \in \baf_m(\Ga)$ if and only if $T_0 \in \baf_m(S_0)$.

Note that $(T(\sigma)_0 - T(\sigma)_0) \cap \langle n \rangle = \{0\}$ and $\pi: \ZZZ_{mn} \rightarrow \ZZZ_{mn}/\langle n \rangle, x \mapsto x+\langle n \rangle$ is a covering map from $\Si(\sigma)$ to $\Ga$, where $\langle n \rangle \cong \ZZZ_m$ is the subgroup of $\ZZZ_{mn}$ generated by $n$. Therefore, $\bar{f}_m(\Gamma)$ can be described as the set of circulant graphs $\Sigma=  \cay (\ZZZ_{mn}, T)$ such that the natural projection $\pi:\ZZZ_{mn}\to \ZZZ_n$ is an $m$-covering of $\Sigma$ onto $\Gamma$. In this way $\bar{f}_m$ can be seen as a right  inverse of $f_m$ as $f_m(\bar{f}_m(\Gamma))=\Gamma$.  

(ii) We observe that  a graph $\Sigma$ in $\bar{f}_m(\Gamma)$ needs not be connected (and it is certainly disconnected if $\Gamma$ is not connected). However, if $\Gamma$ is connected, then $\bar{f}_m(\Gamma)$ contains connected graphs. Write  $S$ as the disjoint union $S=S_1\cup (-S_1)$ and   define $\sigma (s)=1$ for $s\in S_1$ and $\sigma (-s)=m-1$, so that $\sigma$ is feasible. The connectivity of  $\Gamma$ implies $\gcd (S_1\cup \{n\})=1$ and then $\gcd ((S_1+m)\cup \{nm\})= 1$ as well, implying that $\Gamma (\sigma)$ is connected.

(iii) $\Ga$ is the quotient graph of $\bag_{m}(\Gamma)$ with respect to the partition $\{i+\langle n \rangle: 0 \le i \le n-1\}$ of $\ZZZ_{mn}$. Moreover, $\bag_{m}(\Gamma) = \Ga \otimes \cay(\ZZZ_{m}, \ZZZ_{m} \setminus \{0\}) \cong \Ga \otimes K_{m}$ and the subgroup of periods of $T_0$ in $\ZZZ_{mn}$ is $\langle n\rangle$. Hence, if $\Gamma$ is connected, then so is $\bag_m(\Gamma)$.

(iv) We note that, if $n$ is even and $|S|$ is odd, then for every feasible map $\sigma$, $|T(\sigma)|$ is odd and therefore $T(\sigma)$ contains the element $mn/2=s+\sigma (s)m$. If $m$ is also even, as $0 \le \sigma (s)\le m-1$ we must have $\sigma (s)=m/2$ and $s=0\in S$, which is not possible. Hence, we must have $m$ odd in this case.
\end{remark}

\begin{construction}
\label{const:3}
Let $n=p^\ell m$, where $p$ is a prime and $\ell\geq 1$, $m\geq 2$ are integers. Let $\mathbf{l} = (\ell_1, \ell_2, \ldots, \ell_{t})$ and $\mathbf{m} = (m_1, m_2, \ldots, m_t)$ be sequences of positive integers such that $\sum_{i=1}^t \ell_i =\ell$ and $\prod_{i=1}^{t} m_i = m$. Set $r=\ell_t$. Consider the following $2t-1$ ``lifts'' of $K_{p^r}$: First, choose a connected graph $\Gamma_t$ from $\baf_{m_{t}}(K_{p^r})$; set $\Sigma_{t-1}=\bag_{p^{\ell_{t-1}}}(\Gamma_t)$ and choose a connected graph $\Gamma_{t-1}$ from $\baf_{m_{t-1}}(\Sigma_{t-1})$; iterate this procedure till, finally, set $\Sigma_{1} = \bag_{p^{\ell_1}}(\Gamma_{2})$ and choose a connected graph $\Gamma_{1}$ from $\baf_{m_1}(\Sigma_{1})$. Note that, as noted in Remark \ref{rem:fgmbar} (ii), $\baf_{m_i}(\Sigma_{i})$ contains connected graphs, which are the ones chosen in this procedure.

Denote by $\GG_{\mathbf{l}, \mathbf{m}}(n, p^\ell)$ the family of circulant graphs $\Gamma_1$ (as above) obtained from this construction. In view of Definition \ref{def:fgmbar} and Remark \ref{rem:fgmbar}, we see that all graphs in $\GG_{\mathbf{l}, \mathbf{m}}(n, p^\ell)$ are connected with order $n$ and degree $p^\ell - 1$. Symbolically,
$\GG_{\mathbf{l}, \mathbf{m}}(n, p^\ell)$ is obtained from applying $\baf_{m_1}\bag_{p^{\ell_1}} \dots \baf_{m_{t-1}}\bag_{p^{\ell_{t-1}}}  \baf_{m_t}$ to $K_{p^{\ell_t}}$.
\qed
\end{construction}

The following lemma can be easily proved.

\begin{lemma}
\label{pr:pr4.1}
Let $p$ be a prime, $m, \ell \ge 1, n \ge 3$ integers, and $\Gamma=\cay(\ZZZ_n, S)$ a circulant graph of order $n$. Then the following hold:
\begin{itemize}
\item[(a)] if $|S_0|$ is a divisor of $n$ and $(S_0-S_0)\cap\langle|S_0|\rangle=\{0\}$ in $\ZZZ_n$, then $(T(\s)_0-T(\s)_0)\cap\langle|T(\s)_0|\rangle=\{0\}$ in $\ZZZ_{mn}$ for any $T(\s)_0 \in \baf_m(S_0)$;
\item[(b)] if $|S_0|$ is a divisor of $n$, then $\{x \pmod{|S_0|}: x \in S_0\} = \{x \pmod{|T(\s)_0|}: x \in T(\s)_0\}$ for any $T(\s)_0 \in \baf_m(S_0)$;
\item[(c)] if $|S_0|$ is a divisor of $n$, $p$ is a divisor of $n/|S_0|$, and $(S_0-S_0)\cap\langle|S_0|\rangle=\{0\}$ in $\ZZZ_n$, then $(T_0-T_0) \cap \langle |T_0| \rangle \neq \{0\}$ in $\ZZZ_{p^{\ell}n}$ for $T_0 = \bag_{p^{\ell}}(S_0)$.
\end{itemize}
\end{lemma}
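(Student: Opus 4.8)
The plan is to treat the three parts essentially independently, after recording one common observation about cardinalities. First I would note that, taking representatives of $S_0$ in $\{0,1,\ldots,n-1\}$, the map $s\mapsto s+\s(s)n$ is injective modulo $mn$ (distinct $s$ already differ modulo $n$), so $|T(\s)_0|=|S_0|$; similarly, for $T_0=\bag_{p^\ell}(S_0)$ the pairs $(s,i)$ with $s\in S_0$ and $0\le i\le p^\ell-1$ yield distinct elements of $\ZZZ_{p^\ell n}$, so $|T_0|=|S_0|p^\ell$. Throughout I would use that $|S_0|\mid n$ (hence $n\equiv 0\pmod{|S_0|}$) and that, for a divisor $d$ of $N$, the subgroup $\langle d\rangle$ of $\ZZZ_N$ is exactly the set of multiples of $d$.

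For part (a), since $|T(\s)_0|=|S_0|$ and $|S_0|\mid mn$, we have $\langle|T(\s)_0|\rangle=\langle|S_0|\rangle$ in $\ZZZ_{mn}$. I would take two elements $s+\s(s)n$ and $s'+\s(s')n$ of $T(\s)_0$ and compute their difference $(s-s')+(\s(s)-\s(s'))n$. Because $|S_0|\mid n$, the second summand vanishes modulo $|S_0|$, so this difference lies in $\langle|S_0|\rangle$ if and only if $s-s'\equiv 0\pmod{|S_0|}$, i.e.\ $s-s'\in\langle|S_0|\rangle$ in $\ZZZ_n$. The hypothesis $(S_0-S_0)\cap\langle|S_0|\rangle=\{0\}$ then forces $s=s'$ in $\ZZZ_n$, whence $\s(s)=\s(s')$ and the difference is $0$ in $\ZZZ_{mn}$, giving the claim. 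Part (b) is the same reduction read off directly: for $x=s+\s(s)n$ we get $x\equiv s\pmod{|S_0|}$, and since $|T(\s)_0|=|S_0|$, as $s$ ranges over $S_0$ the residues $x\bmod|T(\s)_0|$ are precisely the residues $s\bmod|S_0|$, which is the asserted set equality.

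For part (c), I would exhibit an explicit nonzero element of the intersection. Since $|T_0|=|S_0|p^\ell$ divides $p^\ell n$, the subgroup $\langle|T_0|\rangle$ consists of the multiples of $|S_0|p^\ell$ in $\ZZZ_{p^\ell n}$. Using $0\in S_0$ with offsets $i=p^{\ell-1}$ and $i'=0$ (note $0\le p^{\ell-1}\le p^\ell-1$), the element $p^{\ell-1}n$ lies in $T_0-T_0$. Writing $n=|S_0|k$ with $k=n/|S_0|$, the hypothesis $p\mid k$ gives $p^\ell\mid p^{\ell-1}k$, so $p^{\ell-1}n=|S_0|\,p^{\ell-1}k$ is a multiple of $|S_0|p^\ell$ and hence lies in $\langle|T_0|\rangle$; moreover $p^{\ell-1}n\ne 0$ in $\ZZZ_{p^\ell n}$ since $0<p^{\ell-1}n<p^\ell n$. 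Thus $(T_0-T_0)\cap\langle|T_0|\rangle\ne\{0\}$.

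The arguments are computational and I do not expect a genuine obstacle; the only thing requiring care is the bookkeeping of which group each subgroup $\langle|S_0|\rangle$, $\langle|T(\s)_0|\rangle$, $\langle|T_0|\rangle$ lives in, and the repeated use of $|S_0|\mid n$ to collapse the lifting offsets modulo $|S_0|$ in (a) and (b) while exploiting $p\mid n/|S_0|$ to produce a short relation in (c).
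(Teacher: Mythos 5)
Your proof is correct. The paper itself offers no argument for this lemma---it is stated only with the remark that it ``can be easily proved''---so there is no official proof to compare against; your computation (injectivity of $s\mapsto s+\s(s)n$ giving $|T(\s)_0|=|S_0|$ and $|T_0|=|S_0|p^{\ell}$, collapsing the lifting offsets modulo $|S_0|$ via $|S_0|\mid n$ in parts (a) and (b), and exhibiting the explicit nonzero element $p^{\ell-1}n\in(T_0-T_0)\cap\langle|T_0|\rangle$ in part (c)) is exactly the routine verification the authors had in mind. One minor remark: in (c) your argument never uses the hypothesis $(S_0-S_0)\cap\langle|S_0|\rangle=\{0\}$, which is harmless since the conclusion holds without it; also, your choice of offset $i=p^{\ell-1}$ rather than $i=1$ is genuinely needed, because $p\mid n/|S_0|$ only guarantees $p^{\ell}\mid p^{\ell-1}\,(n/|S_0|)$, not $p^{\ell}\mid n/|S_0|$.
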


Set
$$
f_m(\baf_m(\Gamma))=\{f_m(\Ga(\sigma)): \Ga(\sigma)\in \baf_m(\Gamma)\},\;
\baf_m(f_m(\Gamma)) = \baf_m(\Si) \text{ where } \Sigma = f_m(\Gamma).
$$

\begin{lemma}
\label{pr:pr4.2}
Let $n \ge 3$ be an integer, $m \ge 1$ a divisor of $n$, and $\Gamma=\cay(\ZZZ_n, S)$ a circulant graph of order $n$. Then the following hold:
\begin{itemize}
\item[(a)] if $(S_0-S_0) \cap \langle n/m \rangle = \{0\}$ in $\ZZZ_n$, then all graphs in $f_m(\baf_m(\Gamma))$ are isomorphic to $\Gamma$, and moreover $f_m(\baf_m(\Gamma)) \subseteq \baf_m(f_m(\Gamma))$;
\item[(b)] if $S_0$ is $\langle n/m \rangle$-periodic, then $\Gamma \cong g_m(\bag_{m}(\Gamma)) \cong \bag_m(g_m(\Gamma))$.
\end{itemize}
\end{lemma}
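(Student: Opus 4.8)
The plan is to prove both parts by directly unwinding the definitions of the lift operators $\baf_m,\bag_m$ and the quotient operators $f_m,g_m$, tracking connection sets through the two canonical isomorphisms $\ZZZ_{mn}/\langle n\rangle\cong\ZZZ_n$ (reduction modulo $n$) and $\ZZZ_n/H\cong\ZZZ_{n/m}$ (reduction modulo $n/m$), where $H=\langle n/m\rangle$ is the subgroup of $\ZZZ_n$ of order $m$. Throughout I identify $f_m(\Gamma)$ with $\cay(\ZZZ_{n/m},T)$, $T=\{s\bmod(n/m):s\in S\}$, as in Remark~\ref{rem:fgm}(i), so that $\baf_m(f_m(\Gamma))$ and $f_m(\baf_m(\Gamma))$ are both naturally families of circulant graphs on $\ZZZ_n$; the claimed inclusion and isomorphisms then become comparisons of concrete connection sets under these identifications.

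For part (a), fix a feasible $\sigma:S_0\to\{0,\dots,m-1\}$ and consider $\Ga(\sigma)=\cay(\ZZZ_{mn},T(\sigma))$. First I would note that $f_m$ applies to $\Ga(\sigma)$: as recorded in Remark~\ref{rem:fgmbar}(i), $(T(\sigma)_0-T(\sigma)_0)\cap\langle n\rangle=\{0\}$, because two lifts $s+\sigma(s)n$ and $s'+\sigma(s')n$ are congruent modulo $n$ only when $s=s'$. Reducing $T(\sigma)=\{s+\sigma(s)n:s\in S\}$ modulo $\langle n\rangle$ annihilates the term $\sigma(s)n$ and, under $\ZZZ_{mn}/\langle n\rangle\cong\ZZZ_n$, returns exactly $S$; hence $f_m(\Ga(\sigma))=\cay(\ZZZ_n,S)=\Gamma$, proving the first assertion (in fact every member of $f_m(\baf_m(\Gamma))$ equals $\Gamma$). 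For the inclusion I would invoke the hypothesis $(S_0-S_0)\cap H=\{0\}$, which says precisely that $s\mapsto s\bmod(n/m)$ is injective on $S_0$, so each $t\in T_0=\{s\bmod(n/m):s\in S_0\}$ has a unique preimage $s\in S_0$. Writing this $s$ uniquely as $s=t+\tau(t)(n/m)$ with $\tau(t)\in\{0,\dots,m-1\}$ defines $\tau:T_0\to\{0,\dots,m-1\}$ with $T(\tau)=S$; since $\tau(0)=0$ and $-S=S$, the map $\tau$ is feasible. Therefore $\Gamma=\cay(\ZZZ_n,T(\tau))\in\baf_m(f_m(\Gamma))$, and combined with $f_m(\baf_m(\Gamma))=\{\Gamma\}$ this gives $f_m(\baf_m(\Gamma))\subseteq\baf_m(f_m(\Gamma))$.

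For part (b), assume $S_0$ is $H$-periodic, so $g_m(\Gamma)$ is defined. For the first isomorphism, recall from Remark~\ref{rem:fgmbar}(iii) that $\bag_m(\Gamma)=\cay(\ZZZ_{mn},T_0\setminus\{0\})$ with $T_0=\{s+in:s\in S_0,\,0\le i\le m-1\}$ being $\langle n\rangle$-periodic, so $g_m$ applies. Reducing $T_0$ modulo $\langle n\rangle$ collapses the index $i$, and under $\ZZZ_{mn}/\langle n\rangle\cong\ZZZ_n$ the image of $T_0$ is $S_0$; deleting the identity coset gives connection set $S$, whence $g_m(\bag_m(\Gamma))\cong\cay(\ZZZ_n,S)=\Gamma$. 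For the second isomorphism I would compute $\bag_m(g_m(\Gamma))$ directly: writing $g_m(\Gamma)=\cay(\ZZZ_{n/m},U)$ with $U_0=S_0/H$, its $\bag_m$-lift has extended connection set $V_0=\{u+i(n/m):u\in U_0,\,0\le i\le m-1\}$ in $\ZZZ_n$. As $i$ runs over $\{0,\dots,m-1\}$ the elements $u+i(n/m)$ sweep out the whole coset $u+H$, so $V_0=\pi_H^{-1}(S_0/H)$; since $S_0$ is $H$-periodic this equals $S_0$, and thus $\bag_m(g_m(\Gamma))=\cay(\ZZZ_n,S)=\Gamma$.

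The computations are essentially bookkeeping with the two canonical identifications, and almost every step is forced once the definitions are expanded. The one genuinely substantive point — and the place where the hypothesis of part (a) enters — is the construction of the feasible map $\tau$: I must verify that $(S_0-S_0)\cap\langle n/m\rangle=\{0\}$ makes $s\mapsto s\bmod(n/m)$ injective on $S_0$ so that $\tau$ is well defined, and then check its feasibility. Correspondingly, in part (b) the hypothesis that $S_0$ is $\langle n/m\rangle$-periodic is exactly what identifies $\pi_H^{-1}(S_0/H)$ with $S_0$ in the second isomorphism. I expect no deeper obstacle beyond keeping the roles of $\langle n\rangle\subseteq\ZZZ_{mn}$ and $H=\langle n/m\rangle\subseteq\ZZZ_n$ carefully distinct.
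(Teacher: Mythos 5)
Your proof is correct and follows essentially the same route as the paper's: both parts are handled by unwinding the definitions of $f_m$, $g_m$, $\baf_m$, $\bag_m$ under the canonical identifications $\ZZZ_{mn}/\langle n\rangle\cong\ZZZ_n$ and $\ZZZ_n/\langle n/m\rangle\cong\ZZZ_{n/m}$, and your computation in (b) — that the lifted extended connection set $\{s \bmod (n/m) + i(n/m): s\in S_0,\, 0\le i\le m-1\}$ recovers $S_0$ precisely because $S_0$ is $\langle n/m\rangle$-periodic — is exactly the paper's argument. You actually supply more detail than the paper for part (a), whose proof there is a one-line ``by the definitions''; your explicit construction of the feasible map $\tau$ with $T(\tau)=S$ from the injectivity of reduction modulo $n/m$ on $S_0$ is the content that justifies that sentence.
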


\begin{proof}
(a) If $(S_0-S_0)\cap\langle n/m \rangle=\{0\}$ in $\ZZZ_n$, then by the definitions of $f_m$ and $\baf_m$ we see that all graphs in  $f_m(\baf_m(\Gamma))$ are isomorphic to $\Gamma$ and $f_m(\baf_m(\Gamma))\subseteq \baf_m(f_m(\Gamma))$.

(b) Denote $k = n/m$. Assume that $S_0$ is $\langle k\rangle$-periodic. Then $\Gamma \cong g_m(\bag_m(\Gamma))$ by the definitions of $g_m$ and $\bag_m$. Recall that $g_{m}(\Gamma) = \cay(\ZZZ_{n}/\langle k \rangle, (S_0/\langle k \rangle) \setminus \{\langle k \rangle\}) \cong \cay(\ZZZ_{k},T)$, where $T$ satisfies $T_0 = \{s \pmod k: s \in S_0\}$. So the connection set of $\bag_m(g_m(\Gamma))$ together with the identity element is given by $\{s \pmod k + ik: s\in S_0, 0 \le i \le m - 1\}$, which is identical to $S_0$ since $S_0$ is $\langle k\rangle$-periodic (and hence $S_0$ is the union of some $\langle k \rangle$-cosets in $\ZZZ_n$). Hence $\bag_m(g_m(\Gamma)) \cong \Ga$.
\end{proof}

\begin{lemma}
\label{it:it2.1.7}
Let $n=p^\ell m$, where $p$ is a prime and $\ell\geq 1, m\geq 2$ are integers. Let $\cay(\ZZZ_n, R)$ and $\cay(\ZZZ_n,S)$ be distinct circulant graphs of $\ZZZ_n$ with degree $|R|=|S|=p^\ell-1$. Then $\baf_{m}(R_0)\cap\baf_{m}(S_0)=\emptyset$.
\end{lemma}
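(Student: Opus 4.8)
The plan is to argue by contradiction, exploiting the fact that the lifting operation $\baf_m$ is undone by reduction modulo $n$, so that the original extended connection set can be read off from any of its lifts. Suppose, to the contrary, that some set $T_0$ lies in both $\baf_m(R_0)$ and $\baf_m(S_0)$. Unwinding Definition~\ref{def:fgmbar}, this means there are feasible maps $\sigma : R_0 \to \{0,1,\ldots,m-1\}$ and $\tau : S_0 \to \{0,1,\ldots,m-1\}$ with
$$
T_0 = \{r+\sigma(r)n : r \in R_0\} = \{s+\tau(s)n : s \in S_0\}
$$
as subsets of $\ZZZ_{mn}$.

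The central step is to establish the invariant that, for any feasible map $\tau$ on $S_0$, reducing $T(\tau)_0$ modulo $n$ returns $S_0$ exactly. Concretely, the natural projection $\pi : \ZZZ_{mn} \to \ZZZ_n$ sends $s+\tau(s)n \mapsto s$, so I would show $\pi(T(\tau)_0) = S_0$. To make this precise I would verify that $\pi$ restricted to $T(\tau)_0$ is a bijection onto $S_0$: the elements $s+\tau(s)n$, $s \in S_0$, are pairwise distinct in $\ZZZ_{mn}$ because their residues modulo $n$ are the pairwise distinct elements of $S_0$ — equivalently $(T(\tau)_0-T(\tau)_0)\cap\langle n\rangle=\{0\}$, as already recorded in Remark~\ref{rem:fgmbar}(i). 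Hence $|T(\tau)_0|=|S_0|$ and $\pi$ identifies the two sets. This is just the statement, phrased at the level of connection sets, that $f_m$ is a left inverse of $\baf_m$, i.e.\ $f_m(\baf_m(\Gamma))=\Gamma$.

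With this invariant in hand the conclusion is immediate: applying $\pi$ to the common set $T_0$ gives $R_0 = \pi(T_0)$ from the first description and $S_0 = \pi(T_0)$ from the second, forcing $R_0 = S_0$. Since distinct circulant graphs on $\ZZZ_n$ have distinct extended connection sets, this contradicts the hypothesis that $\cay(\ZZZ_n, R)$ and $\cay(\ZZZ_n, S)$ are distinct, and the lemma follows.

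I do not expect a serious obstacle in this argument; the only point requiring a little care is the exactness in the second step — namely that reduction modulo $n$ recovers all of $S_0$ and neither collapses nor enlarges it. This rests on the coset-disjointness $(T(\tau)_0-T(\tau)_0)\cap\langle n\rangle=\{0\}$ built into the lifting construction, together with the cardinality count $|T(\tau)_0|=|S_0|=p^\ell$. Once this is pinned down, the disjointness of the two families is a one-line consequence, so the bulk of the write-up will simply be the careful unwinding of Definition~\ref{def:fgmbar}.
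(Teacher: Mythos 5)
Your proof is correct. The invariant you isolate --- the natural projection $\pi\colon\ZZZ_{mn}\to\ZZZ_n$ carries every $T(\tau)_0\in\baf_{m}(S_0)$ back onto exactly $S_0$, because each of its elements is $s+\tau(s)n$ with $0\le s\le n-1$ and $0\le \tau(s)\le m-1$ --- holds by construction, so applying $\pi$ to a hypothetical $T_0\in\baf_{m}(R_0)\cap\baf_{m}(S_0)$ gives $R_0=\pi(T_0)=S_0$ at once, contradicting the distinctness of the two graphs. This is a more unified route than the paper's. The paper's proof splits into two cases according to whether the residue sets $\{x\bmod p^{\ell}:x\in R_0\}$ and $\{x\bmod p^{\ell}:x\in S_0\}$ coincide: when they differ it invokes Lemma~\ref{pr:pr4.1}(b) (lifting preserves residues modulo $|S_0|$), and when they agree it takes $r\in R_0$, $s\in S_0$ with $r\equiv s\pmod{p^{\ell}}$ and $r\neq s$, and runs your mod-$n$ computation on that single pair of lifted elements. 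Your argument subsumes both cases, since residues modulo $p^{\ell}$ are determined by residues modulo $n$; it also tightens a step the paper leaves loose in its Case~2, where distinctness of one pair of elements is asserted to imply distinctness of the two sets --- to be airtight one should take $r\in R_0\setminus S_0$ and check that its lift avoids all of $T(\sigma)_0$, which is precisely what your global projection argument accomplishes. Finally, your proof nowhere uses $|R_0|=|S_0|=p^{\ell}$ or that this number divides $n$, so it establishes disjointness of the lifted families for arbitrary distinct connection sets; the only thing the paper's formulation buys is the reuse of the already-established Lemma~\ref{pr:pr4.1}(b).
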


\begin{proof}
Since $\cay(\ZZZ_n, R)$ and $\cay(\ZZZ_n, S)$ are distinct circulant graphs of $\ZZZ_n$, we have $R \neq S$. Thus one of the following cases happens.

\smallskip
\textsf{Case 1.} $\{x \pmod{|R_0|}: x \in R_0\} \neq \{x \pmod{|S_0|}: x \in S_0\}$.
\smallskip

In this case, by part (b) of Lemma \ref{pr:pr4.1}, for any $T(\d)_0\in \baf_m(R_0)$ and $T(\s)_0\in\baf_{m}(S_0)$, we have $\{x \pmod{|T(\d)_0|}: x \in T(\d)_0\} \neq \{x \pmod{|T(\s)_0|}: x \in T(\s)_0\}$ and hence $T(\d)_0\neq T(\s)_0$. It follows that $\baf_m(R_0)\cap \baf_m(S_0)=\emptyset$.

\smallskip
\textsf{Case 2.} $\{x \pmod{|R_0|}: x \in R_0\} = \{x \pmod{|S_0|}: x \in S_0\}$ but there are elements $r \in R_0, s \in S_0$ such that $r \neq s$ but $r \equiv s \pmod{p^\ell}$.
\smallskip

We claim that, for any $T(\d)_0\in \baf_{m}(R_{0})$ and $T(\s)_0\in\baf_m(S_0)$, the element $r + \sigma(r)n$ of $T(\d)_0$ and the element  $s + \t(s)n$ of $T(\s)_0$ are distinct. Suppose otherwise. Then $(s - r)+(\t(s)-\sigma(r))n=0$ and hence $s-r \equiv 0\pmod{n}$. Since $0 \leq r, s \leq n-1$, we must have $r = s$, which is a contradiction. So $r + \sigma(r)n \neq s + \t(s)n$ and thus $T(\d)_0 \neq T(\s)_0$. It follows that $\baf_m(R_0)\cap \baf_m(S_0)=\emptyset$.
\end{proof}

\begin{lemma}
\label{pr:pr4.3}
Let $n=p^\ell m$, where $p$ is a prime and $\ell\geq 1, m\geq 2$ are integers such that $\gcd(m,p)=1$, and let $\cay(\ZZZ_n, S)$ be a circulant graph with order $n$ and degree $|S|=p^\ell - 1$. Then the following hold for any integers $a, b \ge 2$ and $r \geq 1$:
\begin{itemize}
\item[(a)]
$(\baf_{b}\baf_{a})(S_0) = \baf_{ab}(S_0)$;
\item[(b)]
 $(\bag_{b}\bag_{a})(S_0) = \bag_{ab}(S_0)$;
\item[(c)]
if $\gcd(a,p)=1$, then $(\bag_{p^r}\baf_{a})(S_0)\subseteq (\baf_{a}\bag_{p^r})(S_0)$;
\item[(d)]
if $p$ is a divisor of $a$, then $(\bag_{p^r}\baf_{a})(S_0) \cap (\baf_{a}\bag_{p^r})(S_0) = \emptyset$;
\item[(e)]
if $p$ is a divisor of $a$, then for any nonnegative integers $c, h, k$ with $1\leq c<h$ and $k\neq 0$, we have $(\bag_{p^{k}}\baf_{a}\bag_{p^{h}})(S_0) \cap (\bag_{p^{k+c}}\baf_{a}\bag_{p^{h-c}})(S_0) = \emptyset$;
\item[(f)]
if $p$ is a divisor of $a$, then for any divisor $d$ of $a$ which is divisible by $p$, we have $(\baf_{bd}\bag_{p^r}\baf_{a/d})(S_0) \cap (\baf_{b}\bag_{p^r}\baf_{a})(S_0) = \emptyset$.
\end{itemize}
\end{lemma}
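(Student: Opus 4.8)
The unifying principle is that each operator $\baf_\bullet$ and $\bag_\bullet$ only ever adds integer multiples of the \emph{current} modulus. Hence, along any tower built on top of $S_0\subseteq\ZZZ_n$, every element remains congruent modulo $n$ to a unique $s\in S_0$; so for a set $U$ in any of these families, writing the representative of $u\in U$ in $\{0,\dots,M-1\}$ (where $M$ is the order of the ambient cyclic group) as $u=s+c_u n$ with $s\in S_0$ partitions $U$ into fibres indexed by $S_0$, and the coefficients $c_u$ record everything happening above level $n$. For (a), I would compose the two feasible maps: a typical element of $(\baf_b\baf_a)(S_0)$ over $s$ is $s+(\sigma(s)+a\tau)n$ with $\sigma(s)\in\{0,\dots,a-1\}$ and $\tau\in\{0,\dots,b-1\}$, and $\rho(s):=\sigma(s)+a\tau$ is precisely the base-$a$ encoding of an arbitrary element of $\{0,\dots,ab-1\}$; this gives the desired bijection with $\baf_{ab}(S_0)$ once one checks that the symmetry condition $-T=T$ survives the encoding. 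Part (b) is the same base-$a$ identity with \emph{all} coefficients present, hence immediate. For (c), $\gcd(a,p)=1$ makes multiplication by $a$ a bijection of $\ZZZ_{p^r}$, which lets me match each set from $\bag_{p^r}\baf_a$ (first fix an $\baf_a$-shift depending only on $s$, then fill all $p^r$ copies) with one from $\baf_a\bag_{p^r}$ (first fill all $p^r$ copies, then assign shifts) by transporting shifts along $j\mapsto(aj+\mathrm{shift})\bmod p^r$; once more the work is to verify that feasibility is carried along.

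For the disjointness statements (d)--(f), the plan is to read off from this fibre structure an invariant that the two families never share. The decisive feature is \emph{at which scale} the periodicity-creating operator $\bag_{p^r}$ sits: whether $\bag_{p^r}$ is applied before or after a given $\baf$ changes the modulus against which it fills its copies, and this is detected by the residues of the coefficients $c_u$ modulo a suitable integer. I will describe this for (f), the representative case; statement (d) is essentially the degenerate instance (one compares periodicity with respect to $\langle an\rangle$ instead), and (e) is analogous.

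For (f), write $V_1=(\baf_{bd}\bag_{p^r}\baf_{a/d})(S_0)$ and $V_2=(\baf_b\bag_{p^r}\baf_a)(S_0)$, both families inside $\ZZZ_{abp^r n}$. Composing the definitions (rightmost operator first), a typical element of a $V_2$-set over $s$ has $c_u=\sigma(s)+a(j+\rho p^r)$ with $\sigma(s)\in\{0,\dots,a-1\}$ fixed, $j\in\{0,\dots,p^r-1\}$, $\rho\in\{0,\dots,b-1\}$; since $\baf_a$ is innermost, $\sigma(s)$ is constant on the fibre and $c_u\equiv\sigma(s)\pmod a$, so the set $\{c_u\bmod a\}$ is a singleton on every fibre. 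For a $V_1$-set, $\bag_{p^r}$ now fills copies over the smaller modulus $(a/d)n$, giving $c_u=\sigma'(s)+(a/d)(j'+\rho' p^r)$ with $\sigma'(s)\in\{0,\dots,a/d-1\}$, $j'\in\{0,\dots,p^r-1\}$, $\rho'\in\{0,\dots,bd-1\}$. Because $\bag_{p^r}$ supplies \emph{all} values $j'\in\{0,\dots,p^r-1\}$ on each fibre and $r\ge1$ forces $\rho'p^r\equiv0\pmod p$, the integer $k:=j'+\rho'p^r$ satisfies $k\equiv j'\pmod p$, so $k\bmod p$ runs over all of $\ZZZ_p$. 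Since $p\mid d$, this makes $k\bmod d$ take at least $p$ values, and using $(a/d)k\bmod a=(a/d)(k\bmod d)$ together with $0\le\sigma'(s)<a/d$ (so no carry occurs), the residue $c_u\bmod a=\sigma'(s)+(a/d)(k\bmod d)$ takes at least $p\ge2$ distinct values on each fibre. As $\{c_u\bmod a\}$ is intrinsic to $U$ and has size $1$ for every $V_2$-set but size $\ge2$ for every $V_1$-set, no set lies in both, i.e. $V_1\cap V_2=\emptyset$.

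The routine bookkeeping---checking that the composite shifts land in the stated ranges and that $c_u$ is a well-defined representative---is straightforward. The genuinely delicate step is the final residue count for $V_1$: one must rule out that the feasibility-determined shifts $\rho'_{s,j'}$ conspire to collapse a fibre to a single residue modulo $a$. This is exactly where $p\mid d$ and $r\ge1$ enter, through the observation that $\rho'$ is invisible modulo $p$ (it multiplies $p^r$), so the $p$ distinct values of $j'\bmod p$ persist into $c_u\bmod a$ no matter how $\rho'$ is chosen; the same ``$\rho'$ is invisible at the critical scale'' idea is what I expect to drive the analogous counts in (d) and (e).
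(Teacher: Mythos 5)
Parts (a)--(c) of your proposal follow essentially the same route as the paper: composing feasible maps and decomposing base $a$ for (a), the base-$a$ identity for (b), and the bijectivity of multiplication by $a$ on $\ZZZ_{p^r}$ for (c). Your treatment of (d) and (f) is correct but genuinely different. The paper proves (d) by counting, inside the fibre over $0$, the nonzero elements divisible by $anp^{r-1}$, and then obtains (f) only indirectly: it writes $\baf_a(S_0)=(\baf_d\baf_{a/d})(S_0)$ by (a), applies (d), and pushes the disjointness through the outer $\baf_b$ via Lemma \ref{it:it2.1.7}. Your fibre invariant $\{c_u \bmod a\}$ proves (f) in a single step with no appeal to (a), (d) or Lemma \ref{it:it2.1.7}: the bookkeeping is sound (since $0\le\sigma'(s)<a/d$ and $(a/d)(k\bmod d)\le a-a/d$ there are no carries, all your expressions are canonical representatives, so $c_u=\lfloor u/n\rfloor$ is intrinsic to the set), and the count of at least $p$ residues per fibre for $(\baf_{bd}\bag_{p^r}\baf_{a/d})(S_0)$ versus exactly one for $(\baf_b\bag_{p^r}\baf_a)(S_0)$ is right; the same invariant disposes of (d) verbatim. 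This is more self-contained than the paper's argument and isolates the actual mechanism (outer $\baf$-shifts multiply $p^r$, hence are invisible mod $p$).

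The genuine gap is (e), which you dismiss as ``analogous.'' With the invariant as you defined it, it is not: in (e) the operator $\baf_a$ sits above an inner $\bag$ in \emph{both} families, so on each fibre $c_u\equiv i\pmod p$ (resp.\ $c_u\equiv i'\pmod p$) already runs over all $p$ residues for both families, and since $p\mid a$ the set $\{c_u\bmod a\}$ has size at least $p$ on both sides; the mod-$a$ invariant separates nothing. The repair, in your own framework, is to read residues at the scale of the inner $\bag$: modulo $p^h$, a set in $(\bag_{p^{k}}\baf_{a}\bag_{p^{h}})(S_0)$ realizes all $p^h$ residues on each fibre ($c_u\equiv i\pmod{p^h}$), whereas a set in $(\bag_{p^{k+c}}\baf_{a}\bag_{p^{h-c}})(S_0)$ realizes at most $p^{h-\min(c,\,v_p(a))}\le p^{h-1}$ of them, where $v_p(a)\ge 1$ is the exponent of $p$ in $a$, because $p\mid a$ makes the term $j'ap^{h-c}$ partially invisible modulo $p^h$ --- your ``invisible at the critical scale'' idea, but the critical scale is $p^h$, not $a$. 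The choice of scale genuinely matters here: counting fibre elements divisible by $np^{h-c}$ gives exactly $p^{c+k}$ for \emph{both} families when $c>1$, so that scale cannot separate them (indeed the paper's own count $\frac{p(p^c-1)}{p-1}p^{k}$ for the first family at that scale is a miscount of what is actually $p^{c+k}$, so this is exactly the delicate case); the scale $p^h$ (equivalently, counting fibre elements divisible by $np^h$) works for every $c\ge 1$. With this adjustment your plan proves (e) as well, and the whole lemma goes through.
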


\begin{proof}
(a) Every member of $(\baf_{b}\baf_{a})(S_0)$ is of the form $T(\eta)_0\in\baf_{b}(T(\z)_0)$ for some $T(\z)_0\in \baf_{a}(S_0)$, where $\z: S_0 \rightarrow \{0,1,\ldots,a-1\}$ is feasible in $\ZZZ_{an}$ and $\eta: T(\z)_0 \rightarrow \{0,1,\ldots, b-1\}$ is feasible in $\ZZZ_{abn}$. We have $T(\eta)_0 = \{x+\eta(x)an: x \in T(\z)_0\} = \{s+\z(s)n+\eta(s+\z(s)n)an: s\in S_0\} = \{s+\s(s)n: s\in S_0\}$, where we set $\sigma(s) = \z(s)+\eta(s+\z(s)n)a$ for any $s\in S_0$. Since $0\leq \z(s)\leq a-1$ and $0\leq \eta(s+\z(s)n)\leq b-1$, we have $0\leq \sigma(s)\leq ab-1$ for any $s\in S_0$. Note that $\sigma(0)=\z(0)+\eta(0+\z(0)n)a=0+\eta(0+0)a=0$. Since $-T(\eta)_0 = T(\eta)_0$, we have $-T(\sigma)_0 = T(\sigma)_0$ and hence $\sigma: S_0 \rightarrow \{0,1,\ldots,ab-1\}$ is feasible in $\ZZZ_{abn}$. Moreover, by the definition of $T(\sigma)$ we have $T(\eta)_0=T(\sigma)_0 \in \baf_{ab}(S_0)$. Since this holds for any $T(\eta)_0\in (\baf_{b}\baf_{a})(S_0)$, it follows that $(\baf_{b}\baf_{a})(S_0) \subseteq \baf_{ab}(S_0)$.

Conversely, let $T(\sigma)_0\in\baf_{ab}(S_0)$, where $\sigma: S_0 \rightarrow \{0,1,\ldots,ab-1\}$ is feasible in $\ZZZ_{abn}$. Define $\d: S_0 \rightarrow\{0,1,\ldots,a-1\}$ in such a way that for any $s\in S_0$ we have $\d(-s) = a-1-\d(s)$ (and in particular $\d(0)=0$) in $\ZZZ_{an}$, and $\t(s) := (\sigma(s)-\d(s))/a$ is an integer between $0$ and $b-1$ and $\tau(-s)=b-1-\tau(s)$. (Since for each $s\in S_0$, $0\leq \sigma(s)\leq ab-1$, we have $\sigma(x)=x_s+y_sa$ for some integers $0\leq x_s \leq a-1$ and $0\leq y_s \leq b-1$, and thus $(\sigma(s)-x_s)/a$ is an integer between $0$ and $b-1$. Set $\d(s) = x_s$ and $\tau(s) = y_s$ to obtain the required $\d$.) Since $\delta(-s)=a-1-\delta(s)$ in $\ZZZ_{abn}$ for each $s\in S$, we have $T(\delta)=-T(\delta)$, and so by Definition \ref{def:fgmbar}, $\delta$ is feasible. It can be verified that $T(\d)_0\in\baf_{a}(S_0)$ and $\t$ is a feasible map from $S_0$ to $\{0,1,\ldots,b-1\}$. Note that $T(\sigma)_0 = \{s+\sigma(s)n:s\in S_0\} = \{s+(\d(s)+\t(s)a)n:s\in S_0\} = \{(s+\d(s)n)+\t(s)an:s\in S_0\}$ and $T(\d)_0=\{s+\d(s)n:s\in S_0\}$. Since $|S_0|=|T(\d)_0|$ and $\tau(s)=b-1-(\tau(-s))$ for any $s\in S_0$, we obtain a map $\nu:T(\d)_0\rightarrow\{0,1,\ldots,b-1\}$ such that $\nu(s+\d(s)n)=\t(s)$ for any $s \in S_0$ and $\nu(0)=\nu(0+\d(0)n)=\t(0)=0$. Since $T(\d)_0\in \baf_{a}(S_0)$ and $T(\sigma)$ is inverse-closed in $\ZZZ_{abn}$, $T(\nu)$ is also inverse-closed in $\ZZZ_{abn}$ and moreover $T(\nu)_0\in \baf_{b}(T(\d)_0)$. So $T(\sigma)_0 \in \baf_{b}(T(\d)_0) \subseteq (\baf_{b}\baf_{a})(S_0)$. Since this holds for any $T(\sigma)_0 \in \baf_{ab}(S_0)$, it follows that $\baf_{ab}(S_0) \subseteq (\baf_{b}\baf_{a})(S_0)$ and hence $(\baf_{a}\baf_{b})(S_0)=\baf_{ab}(S_0)$ as desired.

(b) Let  $T_0=\bag_{a}(S_0)$. Then $T_0=\{s+in:s\in S_0,0\leq i\leq a-1\}$ is a subset of $\ZZZ_{an}$ and $(\bag_{b}\bag_{a})(S_0) = \{x+jan: 0\leq j\leq b-1, x \in T_0\} = \{s+(i+ja)n:0\leq i\leq a-1, 0\leq j\leq b-1,s\in S_0\} = \{s+kn: 0\leq k\leq ab-1, s\in S_0\} = \bag_{ab}(S_0)$, where in the second last step we used the fact that $\{i+ja: 0\leq i\leq a-1, 0\leq j\leq b-1\}=\{k: 0\leq k\leq ab-1\}$. Thus $(\bag_{b}\bag_{a})(S_0)=\bag_{ab}(S_0)$ as desired.

(c) Assume $\gcd(a,p)=1$. We aim to prove that each element of $(\bag_{p^{r}}\baf_{a})(S_0)$ is an element of $(\baf_{a}\bag_{p^{r}})(S_0)$. Let $T_0\in (\bag_{p^{r}}\baf_{a})(S_0)$.
Then $T_0=\bag_{p^r}(T(\sigma)_0)$ for some feasible map $\sigma:S_0\rightarrow\{0,1,\ldots,a-1\}$ with $T(\sigma)_0\in\baf_{a}(S_0)$. So, by the definitions of $\baf_a$ and $\bag_{p^{r}}$, we have $T_0 = \{x+ian: x \in T(\sigma)_0,0\leq i\leq p^r-1\} = \{s+\sigma(s)n+ian:s\in S_0,0\leq i\leq p^r-1\}$.
For each fixed $s\in S_0$, set $A(s)=\{\a(i)n+\b(i)np^r: \a(i)\equiv (\sigma(s)+ia)\ (\text{mod}\,p^r), \b(i) = (\sigma(s)+ia-\a(i))/p^r, 0\leq i\leq p^r-1\}$. Then for any distinct $0\leq i, j \leq p^r-1$ we have $\a(i)\neq \a(j)$, for otherwise we would have $(i-j)an = (\sigma(s)n+ian)-(\sigma(s)n+jan) = \a(i)n+\b(i)np^r-(\a(j)+\b(j)np^r) = (\b(i)-\b(j))np^r$, which contradicts our assumption that $\gcd(a, p)=1$. Moreover, for each fixed $s\in S_0$, we have $\sigma(s)n+ian=\a(i)n+\b(i)np^r$ and $\{\a(i):0\leq i\leq p^{r}-1\}=\{j: 0\leq j\leq p^r-1\}$. So $T_0 = \{s+\a(i)n+\b(i)np^r:s\in S_0,0\leq i\leq p^r-1\} = \{s+\a(i)n+\b(i)np^r:s\in S_0,0\leq \a(i)\leq p^r-1\} = \{x+\b(i)np^r: x \in \bag_{p^r}(S_0)\}$. Note that $\b(i)=(\sigma(s)+a-\a(i))/p^r$ depends on $\a(i)$ and $s$.
Since $\gcd(a,p)=1$, for any $0\leq i\leq p^r-1$, $ia=\b(i)p^r$ if and only if $\b(i)=0$. Thus, if $s=0$ and $\a(i)=0$, then $ia=\b(i)p^r$ and hence $\b(i)=0$. So we have $\b(0)=0$. We can view $\b$ as a map from $\bag_{p^r}(S_0)$ to $\{0,1,\ldots,a-1\}$. Since $T_0$ is inverse-closed in $\ZZZ_{anp^r}$, $T_0 = \{x+\b(i)np^r: x \in \bag_{p^r}(S_0)\} = T(\b)$ is also inverse-closed in $\ZZZ_{anp^r}$ and hence $\b$ is a feasible map. By the definition of $\baf_{a}$, we obtain that $T_0\in \baf_{a}(\bag_{p^r}(S_0))$. Since this holds for any $T_0\in (\bag_{p^{r}}\baf_{a})(S_0)$, it follows that $(\bag_{p^r}\baf_{a})(S_0)\subseteq(\baf_{a}\bag_{p^r})(S_0)$.

(d) Suppose that $p$ divides $a$. Every element of $(\bag_{p^{r}}\baf_{a})(S_0)$ is of the form $X = \bag_{p^{r}}(T(\sigma)_0)$ for some $T(\sigma)_0 \in \baf_{a}(S_0)$, where $\sigma$ is a feasible map from $S_0$ to $\{0,1,\ldots,a-1\}$ in $\ZZZ_{an}$. So $X = \{x+ian: x \in T(\sigma)_0,0\leq i\leq p^r-1\} = \{s+\sigma(s)n+ian:s\in S_0,0\leq i\leq p^r-1\}$. Similarly, every element of $(\baf_{a}\bag_{p^{r}})(S_0)$ is of the form $Y = \{s+in+\t(s+in)np^r:s\in S_0,0\leq i\leq p^r-1\}$ for some map $\t: \bag_{p^r}(S_0) \rightarrow \{0,1,\ldots,a-1\}$ satisfying $\t(0)=0$ and $-Y = Y$ in $\ZZZ_{an p^r}$.

Set $A=\{x: x\in X, x\equiv0\pmod{n}\}$ and $B=\{y: y \in Y, y \equiv 0\pmod{n}\}$. Then $|A|=|B|=p^r$. Since each $x \in A$ is divisible by $n$ and is of the form $x = s+\sigma(s)n+ian$, we have $x = 0 + \sigma(0)n+ian = ian$ for some $0\leq i\leq p^r-1$. Similarly, each $y \in B$ is of the form $y = 0 + in + \t(0+in)np^r = in + \t(in)np^r$ for some $0\leq i\leq p^r-1$. It follows that there are elements $x \in A$ such that $x \neq 0$ and $x\equiv 0\pmod{anp^{r-1}}$. On the other hand, if an element $y \in B$ satisfies $y \equiv 0\pmod{anp^{r-1}}$, then $y = in + \t(in)np^{r} = kanp^{r-1}$ for some integers $0 \leq i \leq p^r-1$ and $k$, that is, $i+\t(in)p^{r}=kap^{r-1}$. Since $p$ is a divisor of $a$, we have $kap^{r-1}\equiv 0\pmod{p^r}$ and hence $i\equiv 0\pmod{p^r}$. Since $0 \leq i\leq p^r-1$, we must have $i=0$ and therefore $y = 0 + \tau(0)np^{r} = 0$. Thus, there is no element $y \in B$ such that $y \neq 0$ and $y \equiv 0\pmod{anp^{r-1}}$. Hence the number of elements $x$ of $X$ satisfying $x\equiv 0\pmod{anp^{r-1}}$ differs from the number of elements $y$ of  $Y$ satisfying  $y \equiv 0\pmod{anp^{r-1}}$. Therefore, $X \neq Y$. Since this holds for any $X \in (\bag_{p^r}\baf_{a})(S_0)$ and $Y \in (\baf_{a}\bag_{p^r})(S_0)$, we conclude that $(\bag_{p^r}\baf_{a})(S_0)\cap(\baf_{a}\bag_{p^r})(S_0)=\emptyset$.

(e) Every element of $(\bag_{p^{k}}\baf_{a}\bag_{p^{h}})(S_0)$ is of the form $X = \bag_{p^{k}}(T(\sigma)_0)$ for some $T(\sigma)_0 \in (\baf_{a}\bag_{p^{h}})(S_0)$, where $\sigma$ is a feasible map from $\bag_{p^{h}}(S_0)$ to $\{0,1,\ldots,a-1\}$ in $\ZZZ_{anp^{h}}$. So $X = \{s+in+\sigma(s+in)n p^{h}+janp^{h}: s\in S_0, 0\leq i\leq p^{h}-1,0\leq j\leq p^{k}-1\}$. Similarly, every element of $(\bag_{p^{k+c}}\baf_{a}\bag_{p^{h-c}})(S_0)$ is of the form $Y(\t) = \{s+in+\t (s+in)n p^{h-c}+janp^{h-c}:s\in S_0,0\leq i\leq p^{h-c}-1,0\leq j\leq p^{k+c}-1\}$, where $\t$ is a feasible map from $\bag_{p^{h-c}}(S_0)$ to $\{0,1,\ldots,a-1\}$ with $T(\t)_0 \in (\baf_{a}\bag_{p^{h-c}})(S_0)$. Set $A = \{x: x\in X, x\equiv0\pmod{n}\}$ and $B = \{y: y \in Y, y \equiv0\pmod{n}\}$. Then $|A|=|B|=p^{h+k}$. We have $A = \{in+\sigma(in)n p^{h}+janp^{h}: 0\leq i\leq p^{h}-1, 0\leq j\leq p^{k}-1\}$ and $B = \{in+\t(in)n p^{h-c}+janp^{h-c}: 0\leq i\leq p^{h-c}-1, 0\leq j\leq p^{k+c}-1\}$. Note that $in+\sigma(in)n p^{h}+janp^{h} \in A$ is divisible by $np^{h-c}$ if and only if $i$ is divisible by $p^{h-c}$. Similarly, $in+\t(in)n p^{h-c}+janp^{h-c} \in B$ is divisible by $np^{h-c}$ if and only if $i$ is divisible by $p^{h-c}$. Assume $c > 1$ first. Then there are exactly $\frac{p(p^c-1)}{p-1} p^{k}$ elements $x \in A$ satisfying $x\equiv 0\pmod{np^{h-c}}$, and there are exactly $p^{k+c}$ elements $y \in B$ satisfying $y \equiv 0\pmod{np^{h-c}}$. Since $c > 1$ and $p>1$, we have $\frac{p(p^c-1)}{p-1} p^{k}\neq p^{k+c}$ and therefore $X \neq Y$. Since this holds for any $X \in (\bag_{p^{k}}\baf_{a}\bag_{p^{h}})(S_0)$ and $Y \in (\bag_{p^{k+c}}\baf_{a}\bag_{p^{h-c}})(S_0)$, it follows that $(\bag_{p^{k}}\baf_{a}\bag_{p^{h}})(S_0)\cap(\bag_{p^{k+c}}\baf_{a}\bag_{p^{h-c}})(S_0)=\emptyset$.

Now assume $c=1$. Then there are exactly $p^{k}$ elements $x \in A$ satisfying $x\equiv 0\pmod{np^{h}}$. Since $p$ is a divisor of $a$, for any $y \in B$, $y \equiv 0\pmod{np^{h}}$ if and only if $in+\t(in)np^{h-1}\equiv 0\pmod{np^{h}}$ for some $0 \leq i \leq p^{h-1}-1$. Let $w$ be the number of such integers $i$ satisfying $in+\t(in)np^{h-1}\equiv 0\pmod{np^{h}}$.
Then there are exactly $w p^{k+1}$ elements $y \in B$ satisfying $y \equiv 0\pmod{np^{h}}$. Since $w p^{k+1}\neq p^{k}$, it follows that $X \neq Y$. Therefore, $(\bag_{p^{k}}\baf_{a}\bag_{p^{h}})(S_0)\cap(\bag_{p^{k+c}}\baf_{a}\bag_{p^{h-c}})(S_0)=\emptyset$.

(f) Assume $p$ is a divisor of $d$ and $d$ is a divisor of $a$. Then  $\baf_{a}(S_0)=\baf_{d}\baf_{a/d}(S_0)$ by (a). Since $p$ is a divisor of $d$, we have $(\bag_{p^r}\baf_{d}\baf_{a/d})(S_0)\cap(\baf_d\bag_{p^r}\baf_{a/d})(S_0)=\emptyset$ by (d). This together with Lemma \ref{it:it2.1.7} yields $(\baf_{b}\bag_{p^r}\baf_{d}\baf_{a/d})(S_0)\cap(\baf_{b}\baf_d\bag_{p^r}\baf_{a/d})(S_0)=\emptyset$. By (a) again, we obtain $(\baf_{bd}\bag_{p^r}\baf_{a/d})(S_0)\cap(\baf_{b}\bag_{p^r}\baf_{a})(S_0)=\emptyset$.
\end{proof}

Now we are ready to prove Theorem \ref{th:th2.2.1}. Set $\sum_{\mathbf{a}} = \sum_{i=1}^k a_i$ and $\prod_{\mathbf{a}} = \prod_{i=1}^{k} a_i$ for any sequence of positive integers $\mathbf{a} = (a_1, a_2, \ldots, a_k)$.

\smallskip

\begin{Proof}~\ref{th:th2.2.1}~
Let $n=p^\ell m$, where $p$ is a prime and $\ell\geq 1, m\geq 2$ are integers. We prove the statement in the theorem by induction on $\ell$.

Consider the base case $\ell=1$ first. Let $\Ga = \cay(\ZZZ_{pm}, S)$ be a connected circulant graph with order $n=pm$ and degree $|S|=p-1$ which admits a perfect code. Then by \cite[Theorem 3.1]{deng17}, $H = \langle p\rangle$ is a perfect code of $\Gamma$. Since $(S_0-S_0)\cap H = \{0\}$, by Definition \ref{def:fgm}, $f_m(\Gamma)$ is well defined. Moreover, $f_m(\Gamma)=K_p$ and $\Gamma\in \baf_m(f_m(\Gamma))=\baf_m(K_p) = \GG_{\mathbf{l}, \mathbf{m}}(n, p)$, where $\mathbf{l} = (\ell_1)$ with $\ell_1 = 1$ and $\mathbf{m} = (m_1)$ with $m_1 = m$. Note that the only pair of sequences $\mathbf{l}, \mathbf{m}$ satisfying $\sum_{\mathbf{l}} = 1$ and $\prod_{\mathbf{m}} = m$ is $\mathbf{l} = (\ell_1)$ with $\ell_1 = 1$ and $\mathbf{m} = (m_1)$ with $m_1=m$. Note also that $K_p=\cay(\ZZZ_p, \ZZZ_p\setminus\{0\})$ admits a perfect code and $x - y \not\equiv 0\pmod{p}$ for any distinct elements $x, y \in \ZZZ_p\setminus\{0\}$. Thus, by Lemma \ref{pr:pr4.1}(a), for any $\cay(\ZZZ_{pm}, S) \in \baf_m(K_p) = \GG_{\mathbf{l}, \mathbf{m}}(n, p)$, we have $x - y \not\equiv 0\pmod{p}$ for distinct $x, y \in S_0$. Hence, by Theorem \ref{le:ledeng17p}, $\cay(\ZZZ_{pm}, S)$ admits a perfect code. This establishes the base case for induction.

Assume that for some integer $k \ge 2$ the result holds for any $\ell$ up to $k-1$.
We aim to prove that the result is also true for $\ell=k$. Let $\Ga = \cay(\ZZZ_n, S)$ be a connected circulant graph with order $n=p^k m$ and degree $|S| = p^k - 1$ which admits a perfect code. Let $D$ be a perfect code in $\Ga$. Then by Lemma \ref{thm:primepower} either $D$ or $S_0$ is periodic. Consider first the case when $S_0$ is aperiodic. In this case $D$ is periodic, say, with the subgroup of periods $H$ of order $d$ for some divisor $d > 1$ of $m$. Then $H=\langle n/d \rangle$ and $\Si := \cay(\ZZZ_n/H, (S_0/H) \setminus \{H\})$ is isomorphic to some circulant graph $\cay(\ZZZ_{n/d}, T)$ with order $n/d$ and degree $|T| = p^k-1$.
Since $D/H$ is aperiodic, $T_0$ is periodic by Lemma \ref{thm:primepower}. Let $K$ be the subgroup of periods of $T_0$. Then $|K| = p^{h}$ for some $h \ge 1$, and $\De := \cay(\ZZZ_{p^{k-h}m/d},(T/K)\setminus \{K\})$ admits a perfect code.
Since $\De$ has degree $p^{k-h} - 1 < p^k - 1$, by the induction hypothesis we have $\De \in \GG_{\mathbf{l}_1, \mathbf{m}_1}(p^{k-h}m/d, p^{k-h})$ for some $\mathbf{l}_1 = (\ell_1, \ell_2, \ldots, \ell_t)$ and $\mathbf{m}_1 = (m_1, m_2, \ldots, m_t)$ such that $\sum_{\mathbf{l}_1} =k-h$ and $\prod_{\mathbf{m}_1} = m/d$. Note that $\De = g_{p^{h}}(f_{d}(\Gamma))$ and hence $\Gamma \in (\baf_{d}\bag_{p^{h}})(\De)$. Therefore, $\Ga \in \GG_{\mathbf{l}, \mathbf{m}}(n, p^{k})$ for $\mathbf{l} = (h, \mathbf{l}_1) = (h, \ell_1, \ell_2, \ldots, \ell_t)$ and $\mathbf{m} = (d, \mathbf{m}_1) = (d, m_1, m_2, \ldots, m_t)$. Now consider the case when $S_0$ is periodic. In this case the subgroup of periods of $S_0$ is of the form $H = \langle p^{h}m\rangle$ for some $h \ge 1$. Since $|S_0/H|=p^{k-h}<p^k$, we have $g_{p^{h}}(\Gamma) \in \GG_{\mathbf{l}_2, \mathbf{m}_2}(p^{k-h}m, p^{k-h})$ for some $\mathbf{l}_2 = (\ell_1, \ell_2, \ldots, \ell_t)$ and $\mathbf{m}_2 = (m_1, m_2, \ldots, m_t)$ with $\sum_{\mathbf{l}_2} = k-h$ and $\prod_{\mathbf{m}_2} = m$. Therefore, $\Ga \in \GG_{\mathbf{l}, \mathbf{m}}(n, p^{k})$ for $\mathbf{l} = (h, \mathbf{l}_2) = (h, \ell_1, \ell_2, \ldots, \ell_t)$ and $\mathbf{m} = (1, \mathbf{m}_2) = (1, m_1, m_2, \ldots, m_t)$.

Now we prove that for any $\mathbf{l} = (\ell_1, \ell_2, \ldots, \ell_t)$ and $\mathbf{m} = (m_1, m_2, \ldots, m_t)$ with $\sum_{\mathbf{l}} = k$ and $ \prod_{\mathbf{m}} = m$, any $\Ga = \cay(\ZZZ_n, S) \in \GG_{\mathbf{l}, \mathbf{m}}(n, p^{k})$ admits a perfect code. Since $\Gamma\in \GG_{\mathbf{l}, \mathbf{m}}(n, p^{k})$, there exist integers $\ell_1, m_1$ and a circulant graph $\Sigma$ with degree $p^{k-\ell_1}$ such that $\Gamma\in \baf_{m_1}\bag_{p^{\ell_1}}(\Sigma)$. By the induction hypothesis, we may assume that $\Sigma=\cay(\ZZZ_{p^{k-\ell_1}m/m_1},T)\in \GG_{\mathbf{l'}, \mathbf{m'}}(p^{k-\ell_1}m/m_1, p^{k-\ell_1})$ and that $\Sigma$ admits a perfect code, where $\mathbf{l'}=(\ell_2,\ell_3,\ldots,\ell_t)$ and $\mathbf{m'}=(m_2, m_3,\ldots,m_t)$. We claim that any perfect code $D \subset \ZZZ_{p^{k-\ell_1}m/m_1}$ in $\Sigma$ is also a perfect code in $\bag_{p^{\ell_1}}(\Sigma)$. In fact, $\bag_{p^{\ell_1}}(\Sigma) = \cay(\ZZZ_{n/m_1}, R)$, where $R = \{w+i\ZZZ_{p^{k-\ell_1}m/m_1}: w \in T_0, 0\leq i\leq p^{\ell_1}\}$. For any two distinct vertices $u, v \in D$, if $u$ is adjacent to $v$ in $\bag_{p^{\ell_1}}(\Sigma)$, then there exist $w \in T$ and $i \in \ZZZ_{p^{\ell_1}}$ such that $u - v = w + (ip^{k-\ell_1}m/m_1)$. So $u-v=w \in T$ or $v-u= w \in T$ as $u, v \in \ZZZ_{p^{k-\ell_1}m/m_1}$, but this means that $D$ is not an independent set of $\Sigma$, a contradiction. Thus, $D$ must be an independent set of $\bag_{p^{\ell_1}}(\Sigma)$. For any vertex $x$ of $\bag_{p^{\ell_1}}(\Sigma)$ outside $D$, there exists $i$ between $0$ and $p^{\ell_1}-1$ such that $x = u+(ip^{k-\ell_1}m/m_1)$ for some $u \in \ZZZ_{p^{k-\ell_1}m/m_1}$. So $u$ is adjacent to a unique vertex $v$ in $D$ and $x$ is adjacent to $v$ due to the form of $R$. Hence $D$ is a perfect code in $\bag_{p^{\ell_1}}(\Sigma)$ as claimed. Moreover, we have $\ZZZ_{n/m_1}=R_0 \oplus D$. Since $(S_0-S_0)\cap\langle n/m_1\rangle=\{0\}$, by Lemmas \ref{le:ledeng1424} and \ref{le:lesdtgd} we have $\ZZZ_{n}=S_0\oplus (D+\langle n/m_1\rangle)$ and hence $\Ga$ admits $D+\langle n/m_1\rangle$ as a perfect code.

Up to now we have proved that based on the induction hypothesis the result is true for $\ell=k$. This completes the proof of the theorem by mathematical induction.
\qed
\end{Proof}


\section{Proof of Theorem~\ref{cor:cor2.2}}
\label{sec:enum}

As a preparation for the proof of Theorem~\ref{cor:cor2.2}, we now construct a special admissible subgroup series of $\ZZZ_n$ associated with a pyramidal extended connection set $S_0$. We will soon see that this series is the unique longest admissible subgroup series of $\ZZZ_n$ associated with $S_0$.

\begin{construction}
\label{const:1}
Let $n=p^\ell m$, where $p$ is a prime and $\ell\geq 1, m\geq 2$ are integers. Let $S$ be a connection set of $\ZZZ_n$ with $|S| = p^\ell-1$ such that $S_0$ is pyramidal. We now construct a special admissible subgroup series
\begin{equation}
\label{eq:iotaS}
\iota(S_0): H_0 < H_1 < \cdots < H_{2t-1} \le H_{2t} = \ZZZ_n
\end{equation}
associated with $S_0$ together with two sequences of nonnegative integers
\begin{equation}
\label{eq:hk}
\mathbf{h}(S_0) := (h_0, h_1, \ldots, h_{t-1}, h_t),\;
\mathbf{k}(S_0) := (k_0, k_1, \ldots, k_{t-1}, k_t)
\end{equation}
such that the following hold:

(i) $H_{2i-1}=\langle p^{h_{i-1}}k_{i}\rangle$ and $H_{2i}=\langle p^{h_i}k_i\rangle$ for $1 \le i \le t$;

(ii) $k_0 = m$, $k_t = 1$, and $k_i$ is a divisor of $k_{i-1}$ for $1 \le i \le t$;

(iii) if $S_0$ is aperiodic, then $0 = h_t < h_{t-1} < \cdots < h_1 < h_0 = \ell$;

(iv) if $S_0$ is periodic, then $0 = h_t < h_{t-1} < \cdots < h_1 < h_0 < \ell$;

(v) the sequences $\mathbf{l}_{\iota(S_0)}$ and $\mathbf{m}_{\iota(S_0)}$ defined in \eqref{eq:lm} are related to $\mathbf{h}(S_0)$ and $\mathbf{k}(S_0)$ by
$$
\ell_i = h_i - h_{i-1},\; m_i = k_{i-1}/k_i,\; 1 \le i \le t.
$$

First, let $k_0 = m$. If $S_0$ is aperiodic, then let $h_0 = \ell$; if $S_0$ is periodic, then let $1 \le h_0 < \ell$ be such that $\langle p^{h_0} k_0 \rangle$ is the subgroup of periods of $S_0$, where $h_0$ and $k_0$ are positive integers such that $\langle p^{h_0}k_0\rangle$ is the subgroup of periods of $S_0$. Set $H_{0} := \langle p^{h_0} k_0 \rangle$. Since $S_0$ is a pyramidal set, by Theorem \ref{th:thplp}, $\cay(\ZZZ_n, S)$ admits a perfect code, say, $D$. Hence $\ZZZ_n=S_0\oplus D$. Since $S_0/H_0$ is aperiodic and $|S_0/H_0|=p^{h_0}$, $D/H_0$ is periodic in $\ZZZ_n/H_0$. So there is a subgroup $H_1/H_0$ of $\ZZZ_n/H_0$ such that $(S_0/H_0-S_0/H_0)\cap (H_1/H_0)=\{0\}$ and $S_0/H_1$ is periodic in $\ZZZ_n/H_1$. Since $(S_0/H_0-S_0/H_0)\cap (H_1/H_0)=\{0\}$, we have $|S_0/H_0|=|S_0/H_1|$. Note that $|D/H_0|=m$ and $H_1/H_0$ is the subgroup of periods of $D/H_0$. Hence $|H_1|$ is a divisor of $m$ and so there is an integer $k_1$ such that $H_1 = \langle p^{h_0} k_1 \rangle$. Let $H_2/H_{1}$ be the subgroup of periods of $S_0/H_{1}$ in $\ZZZ_n/H_1$, where $H_2$ is a subgroup of $\ZZZ_n$ which contains $H_1$ as a proper subgroup. This defines $H_2$ uniquely, and moreover $H_{2} = \langle p^{h_1} k_1 \rangle$ for some $h_1 < h_0$.

Inductively, assume that for some $i \ge 1$ we have constructed $H_0, H_1, H_2, \ldots, H_{2i-1}, H_{2i}$ together with $h_0, h_1, \ldots, h_i$ and $k_0, k_1, \ldots, k_i$ such that $H_{2j}=\langle p^{h_j}k_j\rangle$ and $H_{2j-1}=\langle p^{h_{j-1}}k_{j}\rangle$ for $1 \le j \le i$. If $H_{2i} = \ZZZ_n$, then set $t = i$ and stop. If $H_{2i} < \ZZZ_n$, then similarly to the proof above we can show that there exists a positive integer $k_{i+1}$ which is a proper divisor of $k_i$ such that $H_{2i+1} := \langle p^{h_i} k_{i+1} \rangle$ has the following properties: $(S_0/H_{2i} - S_0/H_{2i}) \cap (H_{2i+1}/H_{2i}) = \{0\}$, $S_0/H_{2i+1}$ is periodic, and $|S_0/H_{2i}| = |S_0/H_{2i+1}|$. Let $H_{2i+2}/H_{2i+1}$ be the subgroup of periods of $S_0/H_{2i+1}$ in $\ZZZ_n/H_{2i+1}$, where $H_{2i+2}$ is a subgroup of $\ZZZ_n$ which contains $H_{2i+1}$ as a proper subgroup. This defines $H_{2i+2}$ uniquely, and moreover $H_{2i+2} = \langle p^{h_{i+1}} k_{i+1} \rangle$ for some $h_{i+1} < h_i$.

Continuing the process, we can eventually construct an admissible subgroup series \eqref{eq:iotaS} associated with $S_0$ and two sequences \eqref{eq:hk} such that all (i)--(v) are respected. In fact, by the construction we see that (i)--(iv) are satisfied. Also, by (i) and \eqref{eq:H} we obtain that for $1\le i\le t$, $|\langle p^{h_i}k_i\rangle/\langle p^{h_{i-1}}k_i\rangle|=p^{h_{i-1}-h_i}=p^{\ell_i}$ and $|\langle p^{h_{i-1}}k_i\rangle/\langle p^{h_{i-1}}k_{i-1}\rangle|=k_{i-1}/k_i=m_i$,  and hence (v) is satisfied as well.
\qed
\end{construction}

\begin{lemma}
\label{lem:longest}
With the notation in Construction \ref{const:1}, the special admissible subgroup series $\iota(S_0)$ as given in \eqref{eq:iotaS} is the unique longest admissible subgroup series of $\ZZZ_n$ associated with $S_0$.
\end{lemma}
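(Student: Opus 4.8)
The plan is to prove the stronger assertion that a longest admissible subgroup series associated with $S_0$ exists and is unique, and that it is precisely the series $\iota(S_0)$ produced by Construction \ref{const:1}. I would argue by induction on $n$, the key structural input being Lemma \ref{le:leqas}, which guarantees that an even truncation of an admissible series is again an admissible series for the corresponding pyramidal quotient. First note that the number of rounds $t$ of any admissible series is bounded: by Lemma \ref{lem:iota} each even step collapses by a factor $p^{\ell_i}$ with $\ell_i\ge 1$ and $\sum_i\ell_i\le\ell$, so $t\le\ell$ and a longest series exists. I write $\iota^*:H_0^*<H_1^*<\cdots\le H_{2t^*}^*=\ZZZ_n$ for an arbitrary longest series and aim to identify it with $\iota(S_0)$, which simultaneously yields maximality and uniqueness.

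The first task is to pin down the bottom of $\iota^*$. In the periodic case the definition of a pyramidal set forces $S_0$ to be $H_0^*$-periodic, so $H_0^*$ is contained in the subgroup of periods $P$ of $S_0$; since replacing $H_0^*$ by $P$ (so that $S_0/P$ becomes aperiodic) never shortens the series, a longest series must have $H_0^*=P=\langle p^{h_0}k_0\rangle$, the bottom subgroup of $\iota(S_0)$. Quotienting by this $H_0^*$, the periodic case of Definition \ref{def:pyra} shows $S_0/H_0^*$ is an aperiodic pyramidal set, reducing the problem to the aperiodic case in the strictly smaller group $\ZZZ_n/H_0^*$, which is covered by the induction hypothesis. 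It therefore suffices to treat $H_0=\{0\}$ with $S_0$ aperiodic, together with the base case $\ell=1$, where $\sum_i\ell_i=1$ forces $t=1$ and hence the unique series $\{0\}<\langle p\rangle<\ZZZ_n$.

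The heart of the proof is to show that in the aperiodic case every longest series has the same first round $\{0\}<H_1<H_2$ as $\iota(S_0)$. Here I would use the tiling $\ZZZ_n=S_0\oplus D$ coming from Theorem \ref{th:thplp}, the fact that $D$ is periodic because $|S_0|=p^\ell$ is a prime power and $S_0$ is aperiodic (Lemma \ref{thm:primepower}), and the observation that a longest series must extract the \emph{full} subgroup of periods of $S_0/H_{2i-1}$ at each even step: otherwise $S_0/H_{2i}^*$ would still be periodic and, by Lemma \ref{le:ledeng1424}, the even step could be split into two, contradicting maximality. Granting that each even subgroup yields an aperiodic quotient, Lemma \ref{le:leqas}(a) makes the tail $H_2^*<\cdots\le\ZZZ_n$ a longest admissible series for the pyramidal set $S_0/H_2^*$ in the smaller group $\ZZZ_n/H_2^*$, so by the induction hypothesis it equals $\iota(S_0/H_2^*)$ and has length $2(t^*-1)$. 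Once $H_2^*=H_2$ is established, this identifies the remainder of $\iota^*$ with the tail of $\iota(S_0)$ and completes the induction.

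Thus everything comes down to the canonicity of the first even subgroup $H_2$, which I expect to be the main obstacle; the apparent dependence of Construction \ref{const:1} on the chosen perfect code $D$ (different codes can give different first rounds, as already visible for $\cay(\ZZZ_{90},S)$ in Example \ref{ex:ex3.4}) must be resolved in favour of the longest one. Concretely, I would translate the packing condition (T1) into the disjointness $(S_0-S_0)\cap(D-D)=\{0\}$ via Lemma \ref{le:ledirectsum}(c), and then show that a longest series forces $D$ to be chosen with subgroup of periods $H_1$ as small as possible, so that the subsequent full-period collapse $H_2/H_1$ is the smallest first collapse attainable. The required uniqueness, namely that this minimal first even subgroup is well-defined and equals the canonical $\langle p^{h_1}k_1\rangle$, would be obtained by a repeated application of Lemma \ref{thm:primepower} to control which prime-power subgroups can occur as periods of the quotients $S_0/H_1^*$; this is the delicate point where the cyclic structure of $\ZZZ_n$ and the prime-power size of $S_0$ are used essentially.
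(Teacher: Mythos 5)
Your skeleton --- reduce to the aperiodic case, show that every longest admissible series agrees with $\iota(S_0)$ in its first round, then pass to the quotient by $H_2$ via Lemma \ref{le:leqas} and induct --- matches the shape of the paper's argument. But the step you explicitly defer (``everything comes down to the canonicity of the first even subgroup $H_2$, which I expect to be the main obstacle'') is not a detail to be resolved later: it is the entire content of the uniqueness assertion, and the plan you sketch for it does not hold up. You propose to show that a longest series forces the perfect code $D$ to be chosen with ``subgroup of periods $H_1$ as small as possible''. Nothing in your proposal shows that among all subgroups $L_1$ satisfying (T1) for which $S_0/L_1$ is periodic there is a well-defined minimal one, nor that greater length of the series forces a smaller first odd subgroup; Example \ref{ex:ex3.4} exhibits one instance of this phenomenon but proves no implication, and ``repeated application of Lemma \ref{thm:primepower}'' cannot supply it, since that lemma only says that one factor of a tiling is periodic and gives no control over \emph{which} subgroup occurs as the subgroup of periods. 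The paper's proof of exactly this point is a concrete computation with the lift machinery of Section \ref{sec:const}: writing $L_1=\langle p^{x_0}y_1\rangle$, it represents $S_0$ in two ways via feasible maps (one coming from $\iota(S_0)$ through $\baf_{\beta_0}\bag_{p^{\alpha_0}}$, the other from the competing series through $\baf_{\beta_1}\bag_{p^{\alpha_1}}$), and comparing the two representations of the single element $p^{h_1}k_1+\sigma(p^{h_1}k_1)p^{h_0}k_1\in S_0$ forces $y_1=k_1$, i.e.\ $L_1=H_1$; then $L_2=H_2$ follows because both $H_2/H_1$ and $L_2/L_1$ are the subgroup of periods of $S_0/H_1$, and a largest-agreeing-index induction finishes. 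Without a substitute for this computation, your argument establishes neither the uniqueness nor the maximality of $\iota(S_0)$.

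Two smaller problems. First, your claim that a longest series must extract the full subgroup of periods at each even step, ``otherwise the even step could be split into two'' via Lemma \ref{le:ledeng1424}, is unsupported: under the paper's convention (Section \ref{sec:periodic}) that ``$H$-periodic'' designates $H$ as the subgroup of periods, condition (T2) already forces this and no argument is needed; under your weaker containment reading, inserting extra terms into the series requires strict inclusions together with a new instance of (T1) for the inserted odd subgroup, and Lemma \ref{le:ledeng1424} --- a statement about factorizations with a periodic factor --- provides neither. Second, you never verify that $\iota(S_0)$ is itself an admissible series associated with $S_0$; this is the first half of the paper's proof (checking (T1)--(T3) at every level of the output of Construction \ref{const:1}) and is needed before any comparison with a competing series can even begin.
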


\begin{proof}
We use the notation in Construction \ref{const:1}. We first prove that $\iota(S_0)$ is an admissible subgroup series of $\ZZZ_n$ associated with $S_0$. Consider the case when $S_0$ is aperiodic first. In this case, by Construction \ref{const:1}, we have $H_0=\langle p^{h_0} k_0\rangle=\{0\}$, and $(S_0/H_{2i}-S_0/H_{2i})\cap (H_{2i+1}/H_{2i})=\{0\}$, $|S_0/H_{2i}|=|S_0/H_{2i+1}|$ and $S_0/H_{2i+1}$ is $(H_{2i+2}/H_{2i+1})$-periodic for $1\le i\le t$. Since $H_{2t}=\ZZZ_n$, $S_0/H_{2t-1}$ is $(\ZZZ_n/H_{2t-1})$-periodic and  $S_0\subseteq \ZZZ_n$, we have $|S_0/H_{2t-1}|=|\ZZZ_n/H_{2t-1}|$ and hence $\iota(S_0)$ is an admissible subgroup series of $\ZZZ_n$. In the case when $S_0$ is periodic, the proof is essentially the same as in the aperiodic case with $\ZZZ_n/H_0$ and $S_0/H_0$ playing the role of $\ZZZ_n$ and $S_0$, respectively.

Now we prove that $\iota(S_0)$ is the unique longest admissible subgroup series of $\ZZZ_n$ associated with $S_0$. To achieve this it suffices to prove that any admissible subgroup series
\[
\iota': L_0<L_1<\cdots<L_{2r-1}\le L_{2r}=\ZZZ_n
\]
of $\ZZZ_n$ associated with $S_0$ such that $r \ge t$ must be equal to $\iota(S_0)$. In fact, we may write $L_{2i-1}=\langle p^{x_{i-1}}y_i\rangle$ and $L_{2i}=\langle p^{x_i}y_i\rangle$ for $1\leq i\leq r$, so that $\iota'$ is determined by the sequences $\mathbf{x} := (x_0, x_1, \ldots, x_{r-1}, x_r)$ and $\mathbf{y} := (y_0, y_1, \ldots, y_{r-1}, y_r)$. Note that $H_0 = L_0$ regardless of whether $S_0$ is periodic or aperiodic. Set $\alpha_0 = h_0-h_1$, $\beta_0=k_0/k_1$, $\alpha_1=x_0-x_1$ and $\beta_1=y_0/y_1$. Since there are circulant graphs $\cay(\ZZZ_{p^{h_1}k_1},T)$ and $\cay(\ZZZ_{p^{x_1}y_1},T')$ such that  $g_{p^{\alpha_0}}f_{\beta_0}(\Gamma)\cong\cay(\ZZZ_{p^{h_1}k_1},T)$ and $g_{p^{\alpha_1}}f_{\beta_1}(\Gamma)\cong\cay(\ZZZ_{p^{x_1}y_1},T')$, $S_0$ is a member of $\baf_{\beta_0}(\bag_{p^{\alpha_0}}(T_0))$ and $\Gamma$ is a member of $\baf_{\beta_1}(\bag_{p^{\alpha_1}}(T'_0))$. By the definitions of $\baf_m$ and $\bag_m$, there are feasible $\sigma, \tau$ such that $S_0=\{t+ip^{h_1}k_1+\sigma(t+ip^{h_1}k_1)p^{h_0}k_1:1\le i\le p^{\alpha_0}-1, t\in T_0\}$ and $S_0=\{t'+ip^{x_1}y_1+\tau(t'+ip^{x_1}y_1)p^{x_0}y_1:1\le i\le p^{\alpha_1}-1, t'\in T_0\}$. Since $s=p^{h_1}k_1+\sigma(p^{h_1}k_1)p^{h_0}k_1 \in S_0$, there exist $t'\in T'$ and $0\le i\le p^{\alpha_1}-1$ such that $s = t'+ip^{x_1}y_1+\tau(t'+ip^{x_1}y_1)p^{x_0}y_1$. Hence $p^{h_1}k_1-t'-ip^{x_1}y_1=\tau(t'+ip^{x_1}y_1)p^{x_0}y_1-\sigma(p^{h_1}k_1)p^{h_0}k_1$. Since $\bag_{p^{\alpha_0}}(T_0)$ is a subset of $\ZZZ_{p^{h_0}k_1}$, we have $-p^{h_1}k_1=p^{h_0}k_1-p^{h_1}k_1$. Similarly, $-(t'+ip^{x_1}y_1)=p^{x_0}y_1-(t'+ip^{x_1}y_1)$. Hence $p^{h_0}k_1-p^{h_1}k_1-\sigma(p^{h_1}k_1)p^{h_0}k_1=p^{x_0}y_1-(t'+ip^{x_1}y_1)-\tau(t'+ip^{x_1}y_1)p^{x_0}y_1$. That is, $p^{h_0}k_1-p^{x_0}y_1=p^{h_1}k_1-(t'+ip^{x_1}y_1)-\tau(t'+ip^{x_1}y_1)p^{x_0}y_1+\sigma(p^{h_1}k_1)p^{h_0}k_1$.
Combining this with $p^{h_1}k_1-t'-ip^{x_1}y_1=\tau(t'+ip^{x_1}y_1)p^{x_0}y_1-\sigma(p^{h_1}k_1)p^{h_0}k_1$, we obtain that $p^{h_0}k_1=p^{x_0}y_1=0$. Since $h_0=x_0=\ell$, it follows that $y_1=k_1$. So we have proved  $y_1=k_1$,  which implies $H_1=L_1$. Since  $S_0/H_1$ is $(H_2/H_1)$-periodic, we then obtain $H_2=L_2$ and so $x_1=h_1$. Let $i$ be the largest integer between $1$ and $t$ such that $H_{2j-1} = L_{2j-1}$, $H_{2j} = L_{2j}$, $x_j = h_j$ and $y_j = k_j$ for $1 \le j \le i$. (Note that $i$ is well defined as $H_1=L_1$, $H_2=L_2$, $x_1 = h_1$ and $y_1 = k_1$.) We claim that $i = t$. Suppose otherwise. Then $i < t \le r $. Consider the subgroup series $H_{2i}/H_{2i}<H_{2i+1}/H_{2i}<\cdots<H_{2t}/H_{2i}=\ZZZ_n/H_{2i}$ and $L_{2i}/L_{2i}<L_{2i+1}/L_{2i}<\cdots<L_{2r}/L_{2i}=\ZZZ_n/L_{2i}$. Let $H'_{j-2i}=H_j/H_{2i}$ for $2i\le j\le 2t$ and $L'_{j-2i}=L_j/L_{2i}$ for $2i\le j\le 2r$. Then we have two subgroup series $H'_{0}<H'_{1}<\cdots<H'_{2t-2i}=\ZZZ_n/H_{2i}$ and $L'_{0}<L'_{1}<\cdots<L'_{2r-2i}=\ZZZ_n/L_{2i}$. Since $H_{2i}=L_{2i}$, we have $S_0/H_{2i}=S_0/L_{2i}$. Set $S'_0=S_0/H_{2i}=S_0/L_{2i}$. Similarly to the above proof leading to $y_1=k_1$ and $H_1=L_1$, we can show that $H'_1=L'_{1}$. That is, $H_{2i+1}/H_{2i}=L_{2i+1}/L_{2i}$. So $H_{2i+1}=L_{2i+1}$ and $y_{i+1} = k_{i+1}$. Since $S_0/H'_{1}=S_{0}/L'_{1}$ is $(H'_{2}/H'_1)$-periodic and $S_0/H'_{1}=S_{0}/L'_{1}$ is $(L'_{2}/L'_1)$-periodic, from $H'_1=L'_1$ we obtain $H'_2=L'_2$ and $H_{2i+2}=L_{2i+2}$ as $H_{2i}=L_{2i}$. Hence $x_{i+1} = h_{i+1}$, but this contradicts the choice of $i$. So we must have $i=t$, which forces $t = r$ and $\iota' = \iota(S_0)$ as desired.
\end{proof}

\begin{construction}
\label{const:DD}
Let $n=p^\ell m$, where $p$ is a prime and $\ell\geq 1, m\geq 2$ are integers. Let $\cay(\ZZZ_n,S)$ be a connected circulant graph with order $n$ and degree $p^\ell - 1$ such that $S_0$ is pyramidal. Let $\iota(S_0): H_0 < H_1 < \cdots < H_{2t-1} \leq H_{2t} = \ZZZ_n$ be the longest admissible subgroup series of $\ZZZ_n$ associated with $S_0$ (see \eqref{eq:iotaS}) and $\mathbf{h}(S_0) = (h_0,h_1,\ldots, h_{t-1}, h_t)$ and $\mathbf{k}(S_0) = (k_0,k_1,\ldots, k_{t-1}, k_t)$ the corresponding sequences (see \eqref{eq:hk}). Let
\[
J_i:=\{jp^{h_{t-i-1}}k_{t-i}: 0 \le j \le (k_{t-i-1}/k_{t-i})-1\},\;\, 1\leq i\leq t-1.
\]
Set
$$
D_0 := \{jp^{h_{t-1}}:0\leq j\leq k_{t-1}-1\}
$$
and
\begin{equation}
\label{eq:D}
D_{2i-1}:=\{x + \tau_i(x): x \in D_{2i-2}\},\, D_{2i}:=D_{2i-1}+J_i,\;\, 1\leq i\leq t-1,
\end{equation}
where $\tau_i$ is a map from $D_{2i-2}$ to $p^{h_{t-i}}k_{t-i}\ZZZ_{p^{h_{t-i-1}-h_{t-i}}}$ satisfying $\tau_i(0)=0$. Note that for $1\leq i\leq t-1$, $|D_{2i-1}|=|D_{2i-2}|=k_{t-i}$ and both $D_{2i-1}$ and $D_{2i}$ depend on $D_{2i-2}$ and the choice of $\tau_i$. In addition, if $S_0$ is periodic with subgroup of periods $\langle p^{h_0}k_0\rangle$, then set $D_{2t-1}=\{x + \tau(x): x \in D_{2t-2} \}$, where $\tau$ is a map from $D_{2t-2}$ to $p^{h_0} k_0\ZZZ_{p^{\ell-h_0}}$ satisfying $\tau(0)=0$.

Note that both $D_{2t-2}$ and $D_{2t-1}$ contain $0$. Note also that $D_{2t-2}$ depends on $(\tau_{1}, \ldots, \tau_{t-1})$ and $D_{2t-1}$ depends on $(\tau_{1}, \ldots, \tau_{t-1}, \tau)$.
\qed
\end{construction}

\begin{lemma}
\label{co:co3.2}
Let $n=p^\ell m$, where $p$ is a prime and $\ell\geq1, m\geq 2$ are integers. Let $\Gamma=\cay(\ZZZ_n, S)$ be a connected circulant graph with order $n$ and degree $p^\ell-1$ such that $S_0$ is pyramidal. Let $2t$ be the length of the longest admissible subgroup series of $\ZZZ_n$ associated with $S_0$. The the following hold:
\begin{itemize}
\item[(a)]
if $S_0$ is aperiodic, then $D_{2t-2}$ is a perfect code in $\Gamma$; these perfect codes $D_{2t-2}$ are different for different choices of $(\tau_1, \ldots,\tau_{t-1})$;
\item[(b)]
if $S_0$ is periodic, then $D_{2t-1}$ is a perfect code in $\Gamma$; these perfect codes $D_{2t-1}$ are different for different choices of $(\tau_1, \ldots,\tau_{t-1}, \tau)$.
\end{itemize}
\end{lemma}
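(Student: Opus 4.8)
The plan is to read each set $D_j$ produced by Construction~\ref{const:DD} as a perfect code in the intermediate quotient graph
$$
\Gamma^{(j)}:=\cay(\ZZZ_n/H_j,(S_0/H_j)\setminus\{H_j\}),
$$
attached to the longest admissible series $\iota(S_0)$, and to show that the construction simply realizes the climb of a perfect code up the pyramid, from the complete graph $\Gamma^{(2t-1)}\cong K_{p^{\ell_t}}$ at the bottom to $\Gamma^{(0)}$ at the top. I would prove, by induction on $i$ for $1\le i\le t$, the assertion $P(i)$: the set $D_{2i-2}$ consists of $k_{t-i}$ elements which contain $0$ and are pairwise distinct modulo $H_{2t-2i}$, and $D_{2i-2}/H_{2t-2i}$ is a perfect code in $\Gamma^{(2t-2i)}$, i.e. $\ZZZ_n/H_{2t-2i}=(S_0/H_{2t-2i})\oplus(D_{2i-2}/H_{2t-2i})$. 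Since $H_0=\{0\}$ in the aperiodic case, $P(t)$ is exactly part~(a); in the periodic case $P(t)$ produces a perfect code in $\Gamma^{(0)}=\cay(\ZZZ_n/H_0,(S_0/H_0)\setminus\{H_0\})$, and a single further lift across the $H_0$-periodicity will yield part~(b).

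For the base case $P(1)$ I note that $D_0=\{jp^{h_{t-1}}:0\le j\le k_{t-1}-1\}$ reduces modulo $H_{2t-2}=\langle p^{h_{t-1}}k_{t-1}\rangle$ to the subgroup $H_{2t-1}/H_{2t-2}$; by (T1) with index $t-1$ the set $S_0/H_{2t-2}$ meets each coset of $H_{2t-1}/H_{2t-2}$ at most once, and since $|S_0/H_{2t-2}|=p^{\ell_t}=|\ZZZ_n/H_{2t-1}|$ it is a transversal of $H_{2t-1}/H_{2t-2}$, whence $H_{2t-1}/H_{2t-2}$ is a perfect code in $\Gamma^{(2t-2)}$ (recall that a subgroup for which $S_0$ is a transversal is automatically a perfect code). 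The inductive step $P(i)\Rightarrow P(i+1)$ ($1\le i\le t-1$) splits along the two operations of the construction. First, $D_{2i-1}=\{x+\tau_i(x):x\in D_{2i-2}\}$: each value $\tau_i(x)$ lies in $H_{2t-2i}$, so it merely assigns a representative of $H_{2t-2i}/H_{2t-2i-1}$ to each code element. Because $S_0/H_{2t-2i-1}$ has $H_{2t-2i}/H_{2t-2i-1}$ as its subgroup of periods (by the construction of $\iota(S_0)$, cf. (T2)), and because the elements of $D_{2i-2}$ are distinct modulo $H_{2t-2i}$ (so that $(D_{2i-1}/H_{2t-2i-1}-D_{2i-1}/H_{2t-2i-1})\cap(H_{2t-2i}/H_{2t-2i-1})=\{0\}$), Lemma~\ref{le:ledeng1424} upgrades the perfect code $D_{2i-2}/H_{2t-2i}$ to a perfect code $D_{2i-1}/H_{2t-2i-1}$ in $\Gamma^{(2t-2i-1)}$. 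Second, $D_{2i}=D_{2i-1}+J_i$, where $J_i$ is a transversal of $H_{2t-2i-2}$ in $H_{2t-2i-1}$; since $|S_0/H_{2t-2i-2}|=|S_0/H_{2t-2i-1}|$ by (T3), Lemma~\ref{le:lesdtgd} applied in the cyclic group $\ZZZ_n/H_{2t-2i-2}$ (with (T1) providing the direct-sum structure downstairs) gives $\ZZZ_n/H_{2t-2i-2}=(S_0/H_{2t-2i-2})\oplus(D_{2i}/H_{2t-2i-2})$, which is $P(i+1)$; the count $|D_{2i}|=k_{t-i-1}$ and distinctness modulo $H_{2t-2i-2}$ follow from this direct sum together with the bound $|D_{2i}|\le|D_{2i-1}|\,|J_i|$, while $0\in D_{2i}$ because $\tau_i(0)=0$ and $0\in J_i$.

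This establishes part~(a). For part~(b), $P(t)$ gives that $D_{2t-2}/H_0$ is a perfect code in $\Gamma^{(0)}$, and $S_0$ is $H_0$-periodic with $H_0=\langle p^{h_0}k_0\rangle$ its subgroup of periods, which coincides with the codomain $p^{h_0}k_0\ZZZ_{p^{\ell-h_0}}$ of $\tau$. The map $\tau\colon D_{2t-2}\to H_0$ assigns a fibre element of $H_0$ to each code element, and exactly as in the first operation above — via Lemma~\ref{le:ledeng1424}, or equivalently via the wreath decomposition $\Gamma\cong\Gamma^{(0)}\otimes K_{|H_0|}$ of Lemmas~\ref{le:wreath}, \ref{le:lekum} and \ref{le:legraphiso} — one concludes that $D_{2t-1}=\{x+\tau(x):x\in D_{2t-2}\}$ is a perfect code in $\Gamma=\cay(\ZZZ_n,S)$.

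It remains to prove the distinctness claims, and the key observation is that every operation carried out after $D_{2i-1}$ is formed translates code elements only by members of $H_{2t-2i-1}$: indeed $J_i\subseteq H_{2t-2i-1}$, while for $j>i$ one has $\mathrm{im}\,\tau_j\subseteq H_{2t-2j}\subseteq H_{2t-2i-1}$ and $J_j\subseteq H_{2t-2j-1}\subseteq H_{2t-2i-1}$ (and $\mathrm{im}\,\tau\subseteq H_0\subseteq H_{2t-2i-1}$ in the periodic case). Hence $D_{2t-2}/H_{2t-2i-1}=D_{2i-1}/H_{2t-2i-1}$ (and similarly for $D_{2t-1}$), so that, once $D_{2i-2}$ is known, the final code determines $\tau_i$ modulo $H_{2t-2i-1}$ and therefore $\tau_i$ itself, as $\mathrm{im}\,\tau_i$ lies in a transversal of $H_{2t-2i-1}$. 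Thus two parameter tuples that first differ at index $i$ already yield codes that differ modulo $H_{2t-2i-1}$, and a differing final $\tau$ is handled the same way; this gives injectivity of the parameters-to-code map. The main obstacle I anticipate is not any single deep step but the disciplined bookkeeping: fixing the correspondence between construction indices and series levels, checking at each lift that the cardinality hypotheses of Lemmas~\ref{le:lesdtgd} and \ref{le:ledeng1424} follow from (T1)--(T3), and isolating the subgroup $H_{2t-2i-1}$ that records the ``fingerprint'' of $\tau_i$ for the distinctness argument.
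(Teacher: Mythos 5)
Your proposal is correct and is essentially the paper's own proof in unrolled, bottom-up form: the paper inducts on $t$, applying the lemma to the quotient $\cay(\ZZZ_n/H_2,(S_0/H_2)\setminus\{H_2\})$ and then lifting twice (a wreath-product lift realizing $\tau_i$, justified via Lemma~\ref{le:ledeng1424}, followed by the coset lift $+J_i$ justified via (T1)), which is exactly your climb $P(1),\dots,P(t)$ through the intermediate quotients, with your invocation of Lemma~\ref{le:lesdtgd} for the $+J_i$ step an equivalent substitute for the paper's use of (T1) with Lemma~\ref{le:ledeng1424}. Your injectivity argument---recovering $\tau_i$ from the code reduced modulo $H_{2t-2i-1}$, since all later translations lie in $H_{2t-2i-1}$ and the codomain of $\tau_i$ is a partial transversal of that subgroup---is the same peeling argument the paper carries out with explicit integer-range inequalities.
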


\begin{proof}
We use the notation in Construction \ref{const:DD} and set $\Gamma_i = \cay(\ZZZ_n/H_i, (S_0/H_i)\setminus \{H_i\})$, where $H_i$ is obtained from Construction \ref{const:1}, for $0 \le i \le 2t$.

We prove the first statements in (a) and (b) by induction on $t$. Consider the base case $t=1$ first. If $S_0$ is aperiodic, then $H_0=\{0\}$ and by (T3), $\Gamma_1$ is a complete graph and $0+H_1$ is a perfect code in $\Gamma_1$. By (T1) and Lemma \ref{le:ledeng1424}, $H_1$ is the unique prefect code in $\Gamma$ containing $0$. Note that $H_1 = \{0, p^{h_0},\ldots,  (m-1)p^{h_0}\} = D_0$. Hence $D_0$ is the unique perfect code in $\Gamma$ containing $0$. Now assume $S_0$ is $H_0$-periodic. Then $H_0\neq\{0\}$ and $\Gamma_1$ is a circulant graph whose extended connection set $S_0/H_1$ is an aperiodic pyramidal set of $\ZZZ_n/H_1$. Applying what we just proved to $\Ga_1$, we obtain that $D_0$ is the unique perfect code in $\Gamma_1$ containing $0$. Moreover, $D_0=\{0, p^{h_0},\ldots, (m-1)p^{h_0}\}$. Since $\Gamma\cong \Gamma_0\otimes \cay(\ZZZ_{p^{\ell-h_0}}, \ZZZ_{p^{\ell-h_0}}\setminus \{0\})$, by the definition of the wreath product we see that $D_1=\{x + \tau_1(x): x \in D_0\}$ is a perfect code in $\Gamma$ containing $0$, where $\tau_1$ is a map from $D_0$ to $p^{h_0}k_0\ZZZ_{p^{\ell-h_0}}$ satisfying $\tau_1(0)=0$. Since $D_0$ is the unique perfect code in $\Gamma_0$ containing $0$, by the definition of the wreath product, all perfect codes in $\Gamma$ containing $0$ can be obtained by different choices of $\tau_1$. So each perfect code in $\Gamma$ containing $0$ can be constructed from $D_1$. This establishes the result in the base case $t = 1$.

Suppose that the result is true when $t < l$ for some $l \ge 2$. Consider the case when $t=l$. Assume $S_0$ is aperiodic first. Then $\Gamma_2$ is a circulant graph whose extended connection set $T_0 := S_0/H_2$ is an aperiodic pyramidal set of $\ZZZ_n/H_2$. It can be verified that the longest admissible subgroup series associated with $T_0$ is $\iota(T_0): H_2/H_2 < H_3/H_2 < \cdots < H_{2l}/H_2 = \ZZZ_{n}/H_2$ and the corresponding sequences are $\mathbf{h}(T_0) = (a_0,a_1, \ldots,a_{l-1})$ and $\mathbf{k}(T_0) = (b_0,b_1,\ldots,b_{l-1})$, where $a_i = h_{i+1}, b_i = k_{i+1}$ for $0 \le i \le l-1$. By our hypothesis, $D_{2l-4}$ is a perfect code in $\Gamma_2$ containing $0$. Since $\Gamma_1\cong \Gamma_2\otimes \cay(\ZZZ_{p^{h_0-h_1}}, \ZZZ_{p^{h_0-h_1}}\setminus \{0\})$, $D_{2l-3}=\{x + \tau_{l-1}(x): x \in D_{2l-4}\}$ (see \eqref{eq:D}) is a perfect code in $\Gamma_1$ containing $0$, where $\tau_{l-1}$ is a map from $D_{2l-4}$ to $p^{h_1}k_1\ZZZ_{p^{h_0-h_1}}$ satisfying $\tau_{l-1}(0)=0$. By (T1) and Lemma \ref{le:ledeng1424}, $D_{2l-3}$ gives rise to a unique perfect code in $\Gamma$ containing $0$, namely $D_{2l-2}=D_{2l-3}+J_{l-1}=D_{2l-3}+\{jp^{h_0}k_1:0\leq j\leq (k_0/k_1)-1\}$.

Now assume $S_0$ is periodic. Then $\Gamma_0$ is a connected circulant graph whose extended connection set $S_0/H_0$ is an aperiodic pyramidal set of $\ZZZ_n/H_0$. By our hypothesis, $D_{2l-2}$ is a perfect code in $\Gamma_0$. Since $\Gamma\cong \Gamma_0\otimes \cay(\ZZZ_{p^{\ell-h_0}}, \ZZZ_{p^{\ell-h_0}}\setminus \{0\})$, by the definition of the wreath product, $D_{2l-1}=\{x + \tau(x): x \in D_{2l-2}\}$ is a perfect code in $\Gamma$ containing $0$, where $\tau$ is a map from $D_{2l-2}$ to $p^{h_0}k_0\ZZZ_{p^{\ell-h_0}}$. By mathematical induction, we have completed the proof of the first statements in (a) and (b).

Next, we prove the second statement in (a). Assume $S_0$ is aperiodic. Let $D_{2t-2}$ and $D'_{2t-2}$ be perfect codes in $\Gamma$ obtained from $(\tau_1, \ldots, \tau_{t-1})$ and $(\tau'_1, \ldots, \tau'_{t-1})$, respectively. It suffices to prove that $D_{2t-2}=D'_{2t-2}$ implies $(\tau_1, \ldots, \tau_{t-1}) = (\tau'_1, \ldots, \tau'_{t-1})$. Note that, for $0<2i-1\leq 2t-1$ and $x\in D_{2i-1}$ we have $0\leq x\leq p^{h_{t-i-1}}k_{t-i}-p^{h_{t-1}}$, and for $0\leq 2i<2t-1$ and $x\in D_{2i}$ we have $0\leq x\leq p^{h_{t-i-1}}k_{t-i-1}-p^{h_{t-1}}$. Note also that $D_{2t-2}=D_{2t-3}+J_{t-1}=\{x+jp^{h_0}k_1:x\in D_{2t-3}, 0\le j\le (k_{0}/k_{1})-1\}$ and $D'_{2t-2}=D'_{2t-3}+J_{t-1}=\{x+jp^{h_0}k_1:x\in D'_{2t-3}, 0\le j\le (k_{0}/k_{1})-1\}$.
Suppose that $D_{2t-2}=D'_{2t-2}$. Then for each $x+j_1p^{h_0}k_1\in D_{2t-2}$ there exists $y+j_2p^{h_0}k_1\in D'_{2t-2}$ such that $x+j_1p^{h_0}k_1=y+j_2p^{h_0}k_1$. Since $0\leq x, y\leq p^{h_0}k_1-p^{h_{t-1}}$, we have $-p^{h_0}k_1 < x-y < p^{h_0}k_1$. Thus $x+j_1p^{h_0}k_1=y+j_2p^{h_0}k_1$ (that is, $x-y=(j_2-j_1)p^{h_0}k_1$) if and only if $x=y$ and $j_1=j_2$. Meanwhile, we also obtain $D_{2t-3}=D'_{2t-3}$. Since $D_{2t-3}=\{x+\tau_{t-1}(x):x\in D_{2t-4}\}$ and $D'_{2t-3}=\{y+\tau'_{t-1}(y):y \in D'_{2t-4}\}$, this means that for each $x+\tau_{t-1}(x)\in D_{2t-3}$ there exists $y+\tau'_{t-1}(y)\in D'_{2t-3}$ such that $x+\tau_{t-1}(x)=y+\tau'_{t-1}(y)$. Since $0\leq x, y\leq p^{h_1}k_1-p^{h_{t-1}}$, we have $-p^{h_1}k_1< x-y < p^{h_1}k_1$. On the other hand, $\tau'_{t-1}(y)-\tau_{t-1}(x)$ is a multiple of $p^{h_1}k_1$ as both $\tau_{t-1}$ and $\tau'_{t-1}$ are maps from $D_{2t-4}$ to $p^{h_{1}}k_1\ZZZ_{p^{h_2}-h_1}$. Hence $x+\tau_{t-1}(x)=y+\tau'_{t-1}(y)$ (that is, $x-y=\tau'_{t-1}(y)-\tau_{t-1}(x)$) if and only if $x=y$ and $\tau_{t-1}(x)=\tau'_{t-1}(y)$. By the arbitrariness of $x\in D_{2t-4}$, it follows that $D_{2t-4}=D'_{2t-4}$ and $\tau_{t-1}=\tau'_{t-1}$. In the same vein as the proof that $D_{2t-2}=D'_{2t-2}$ implies $D_{2t-3}=D'_{2t-3}$, we can prove that $D_{2t-4}=D'_{2t-4}$ implies $D_{2t-5}=D'_{2t-5}$. Similarly to the proof that $D_{2t-3}=D'_{2t-3}$ implies $D_{2t-4}=D'_{2t-4}$ and $\tau_{t-1}=\tau'_{t-1}$, we can prove that $D_{2t-5}=D'_{2t-5}$ implies $D_{2t-6}=D'_{2t-6}$ and $\tau_{t-2}=\tau'_{t-2}$. By induction, we can prove that $D_{i}=D_{i}$ for $1\leq i\leq 2t-2$ and $\tau_{i}=\tau'_i$ for $1\le i\le t-1$. This proves the second statement in (a).

Finally, we prove the second statement in (b). Assume $S_0$ is periodic. Let $D_{2t-1}$ and $D'_{2t-1}$ be  perfect codes in $\Gamma$ obtained from $(\tau_1, \ldots, \tau_{t-1},\tau)$ and $(\tau'_1, \ldots, \tau'_{t-1},\tau')$, respectively. It suffices to prove that $D_{2t-1}=D'_{2t-1}$ implies $(\tau_1, \ldots, \tau_{t-1},\tau) = (\tau'_1, \ldots, \tau'_{t-1},\tau')$. In fact, since $\{x+\tau(x):x\in D_{2t-2}\} = D_{2t-1} = D'_{2t-1}=\{y+\tau'(y):y \in D'_{2t-2}\}$, for each $x+\tau(x)\in D_{2t-1}$ there exists $y+\tau'(y)\in D'_{2t-1}$ such that $x+\tau(x)=y+\tau'(y)$. Since $0\leq x, y\leq p^{h_0}k_0-p^{h_{t-1}}$, we have $-p^{h_0}k_0 < x-y < p^{h_0}k_0$. On the other hand, $\tau'(y)-\tau(x)$ is a multiple of $p^{h_0}k_0$ as both $\tau$ and $\tau'$ are maps from $D_{2t-2}$ to $p^{h_0}k_0\ZZZ_{p^{\ell-h_0}}$. Hence $x+\tau(x)=y+\tau'(y)$ if and only if $x=y$ and $\tau(x)=\tau'(y)$. By the arbitrariness of $x$, it follows that $D_{2t-2}=D'_{2t-2}$ and $\tau=\tau'$. Based on this and similarly to the argument in the previous paragraph, we can prove that $\tau_i=\tau'_i$ for $1\leq i\leq t-1$ and $\tau=\tau'$, as desired to establish the second statement in (b).
\end{proof}

We are now ready to prove Theorem \ref{cor:cor2.2}. Denote by $c(\Ga)$ the number of distinct perfect codes containing $0$ in a circulant graph $\Ga$.

\smallskip

\begin{Proof}~\ref{cor:cor2.2}.
Let $n=p^\ell m$, where $p$ is a prime and $\ell\geq 1, m\geq 2$ are integers, and let $\Gamma=\cay(\ZZZ_n,S)$ be a connected circulant graph with order $n$ and degree $p^\ell-1$ such that its extended connection set $S_0$ is pyramidal. As shown in Lemma \ref{lem:longest}, the series $\iota(S_0): H_0 < H_1 < H_2 < \cdots < H_{2t-1} \leq H_{2t} = \ZZZ_n$ (see \eqref{eq:iotaS}) is the longest admissible subgroup series of $\ZZZ_n$ associated with $S_0$. Let $\mathbf{h}(S_0) = (h_0, h_1, \ldots, h_{t-1}, h_t)$ and $\mathbf{k}(S_0) = (k_0, k_1, \ldots, k_{t-1}, k_t)$ be the associated sequences (see \eqref{eq:hk}) so that $H_{2i}=\langle p^{h_i}k_i\rangle$ and $H_{2i-1}=\langle p^{h_{i-1}}k_{i}\rangle$ for $1 \le i \le t$. Set $\Gamma_i=\cay(\ZZZ_n/H_i, (S_0/H_i)\setminus \{H_i\})$ for $0\leq i\leq 2t$. We aim to prove $c(\Ga) \ge p^{\left(\sum_{i=0}^{t-1}(h_{i-1}-h_{i})k_{i}\right)+h_{t-1}-\ell}$, where $h_{-1} = \ell$.

We proceed by induction on $t$. Consider the base case $t=1$ first. If $H_0=\{0\}$ (that is, $h_0=\ell$), then $S_0$ is aperiodic, $\Gamma_1$ is a complete graph by (T3), $0+H$ is the only perfect code in $\Gamma_1$ containing $0$, and $H_1$ is the only perfect code in $\Gamma$ containing $0$. Hence $c(\Gamma) = 1 = p^{0} = p^{h_{-1}-h_0}$. If $H_0\neq\{0\}$ (that is, $h_0<\ell$), then applying what we just proved to $\Ga_0$ yields $c(\Gamma_{0})=1$. Note that $\Gamma=\Gamma_0\otimes\cay(H_0,H_0\setminus\{0\})$ and every element of $H_0$ forms a perfect code in the complete graph $\cay(H_0,H_0\setminus\{0\})$. So, by Lemma \ref{co:co3.2}, we can choose different elements of $H_0$ to obtain different perfect codes in $\Gamma$. Since we only count perfect codes in $\Gamma$ containing $0$, it follows that $c(\Gamma)=c(\Gamma_0)p^{(\ell-h_0)(k_0-1)}=p^{(\ell-h_0)(k_0-1)}$. So we have proved that in the case when $t=1$ there are at least $p^{(h_{-1} - h_0) k_0 + h_0-\ell}$ perfect codes in $\cay(\ZZZ_n, S)$ containing $0$ no matter whether $H_0=\{0\}$ or $H_0\neq\{0\}$, where $h_{-1}=\ell$.

Suppose that the result is true when $t < l$ for some $l \geq 2$. We are going to prove that it is true when $t=l$.

\smallskip
\textsf{Case 1.} $H_0=\{0\}$ (that is, $h_0=\ell$).
\smallskip

Note that the extended connection set $T_0 := S_0/H_2$ of $\Ga_2$ is a pyramidal set of $\ZZZ_n/H_2$ for which the longest admissible subgroup series is $L_0 < L_1 < \cdots < L_{2(l-1)} = \ZZZ_n/H_2$, where $L_{i}=H_{i+2}/H_2$ for $0\leq i\leq 2(l-1)$. The corresponding sequences are $\mathbf{h}(T_0) = (a_0,a_1,\ldots,a_{l-1})$ and $\mathbf{k}(T_0) = (b_0,b_1,\ldots,b_{l-1})$, where $a_{i}=h_{i+1}$ and $b_{i}=k_{i+1}$ for $0\leq i\leq l-1$. By the induction hypothesis, $c(\Gamma_2) \ge p^{\left(\sum_{i=0}^{l-2}(a_{i-1}-a_i)b_i\right)+a_{l-2}-a_0}$. Note that $a_{-1}=a_0$ and $h_{-1}=h_0=\ell$. Similarly to the case $t=1$, since $\Gamma_1=\Gamma_2\otimes \cay(\ZZZ_{p^{h_0-h_1}},\ZZZ_{p^{h_0-h_1}}\setminus\{0\})$, we have
\begin{align*}
c(\Gamma_1) & = c(\Gamma_2)p^{(h_0-h_{1})(k_1-1)}\\
& \ge p^{\left(\sum_{i=0}^{l-2}(a_{i-1}-a_i)b_i\right)+a_{l-2}-a_0}\cdot p^{(h_0-h_{1})(k_1-1)}\\
& = p^{(a_{-1}-a_{0})b_0+\left(\sum_{i=1}^{l-2}(a_{i-1}-a_i)b_i\right)+(h_0-h_1)k_1+a_{l-2}-a_0-h_0+h_1}\\
& = p^{\left(\sum_{i=1}^{l-2}(h_{i}-h_{i+1})k_{i+1}\right)+(h_0-h_1)k_1+h_{l-1}-h_0} \\
& = p^{(h_{-1}-h_0)k_0+\left(\sum_{i=1}^{l-2}(h_{i}-h_{i+1})k_{i+1}\right)+(h_0-h_1)k_1+h_{l-1}-\ell}\\
& = p^{\left(\sum_{i=0}^{l-1}(h_{i-1}-h_{i})k_{i}\right)+h_{l-1}-\ell}.
\end{align*}
Here we used $a_0=h_1$ and $a_{l-2}=h_{l-1}$ to obtain the fourth line. By Lemma \ref{co:co3.2}, we have $c(\Gamma)=c(\Gamma_0)=c(\Gamma_1)$ and the desired result follows.

\smallskip
\textsf{Case 2.} $H_0\neq\{0\}$ (that is, $h_0<\ell$).
\smallskip

The extended connection set $T_0 := S_0/H_0$ of $\Gamma_0$ is a pyramidal set of $\ZZZ_n/H_0$ for which the longest admissible subgroup series is $L_0<L_1<\cdots<L_{2l}=\ZZZ_n/H_0$, where $L_i=H_i/H_0$ for $1\le i\le 2l$. The corresponding sequences are $\mathbf{h}(T_0) = (a_0,a_1,\ldots,a_{l})$ and $\mathbf{k}(T_0) = (b_0,b_1,\ldots,b_{l})$, where $a_i=h_i$ and $b_i=k_i$ for $1\le i\le l$.
Applying what we proved in Case 1 to $\Ga_0$, we obtain  $c(\Gamma_0)\ge p^{\left(\sum_{i=0}^{l-1}(a_{i-1}-a_{i})b_{i}\right)+a_{l-1}-a_0}$, where $a_{-1}=h_0$. Similarly to the case $t=1$, we have
\begin{align*}
c(\Gamma) & = c(\Gamma_0)p^{(\ell-h_{0})(k_0-1)}\\
& \ge p^{\left(\sum_{i=0}^{l-1}(a_{i-1}-a_i)b_i\right)+a_{l-1}-a_0} \cdot p^{(\ell-h_{0})(k_0-1)}\\
& =
p^{\left((a_{-1}-a_0)b_0+\sum_{i=1}^{l-1}(a_{i-1}-a_i)b_i\right)+a_{l-1}-a_0+(\ell-h_{0})k_0-\ell+h_0}\\
& =
p^{\sum_{i=1}^{l-1}(a_{i-1}-a_i)b_i+(\ell-h_0)k_0+a_{l-1}-a_0-\ell+h_0}\quad(\text{as $a_{-1}=a_0=h_0$})\\
& =
p^{\sum_{i=1}^{l-1}(h_{i-1}-h_i)k_i+(\ell-h_0)k_0+h_{l-1}-\ell}\quad(\text{as $a_{i}=h_i$ and $b_i=k_i$ for $0\le i\le l$})\\
& =
p^{\sum_{i=1}^{l-1}(h_{i-1}-h_i)k_i+(h_{-1}-h_0)k_0+h_{l-1}-\ell}\quad(\text{taking $h_{-1}=\ell$})\\
& =
p^{\left(\sum_{i=0}^{l-1}(h_{i-1}-h_i)k_i\right)+h_{l-1}-\ell}.
\end{align*}
So the result holds for $\Ga$.

Combining the two cases above, we have proved that the result is true when $t=l$. This completes the proof by mathematical induction.
\qed
\end{Proof}

\section{Final remarks}\label{sec:final}

The main result of this paper gives a complete explicit description of the family of circulant graphs admitting a perfect code with extended connection set of cardinality a prime power (Theorem \ref{th:th2.2.1}) as well as a structural description of such circulant graphs (Theorem \ref{th:thplp}). The characterization also leads to a counting result (Theorem \ref{cor:cor2.2}) which shows that the universe of perfect codes in circulant graphs is relatively rich.

 The characterization can be extended by a similar approach to the case of perfect codes in the directed case, namely when the connection set $S$ of $\cay(\ZZZ_n, S)$ is not symmetric, although the corresponding results are more involved and we have not attempted to report them here. In another direction, a result analogous to Theorem \ref{th:thplp} can be obtained for the closely related family of Cayley sum graphs $\cay^{+}(\ZZZ_n, S)$ of cyclic groups, where two vertices $x,y \in \ZZZ_n$ are adjacent if and only if $x+y\in S$. 

The main limitation of our approach is the restriction to prime powers for the cardinality of the extended connection sets. This is due to the use of the key Lemma \ref{thm:primepower}, which provides the strategy to reach the pyramidal structure in  Theorem \ref{th:thplp} and then to the recursive description of  the family of circulant graphs admitting perfect codes by lifts of the complete graph given in Construction \ref{const:3}. It is still an open problem to determine the extent to which the statement of Lemma \ref{thm:primepower} can be generalized. In view of the results in the present paper it appears that a complete characterization of perfect codes, even in the case of circulant graphs, beyond the prime power case may become a project of high complexity. In any case, in absence of the result in  Lemma \ref{thm:primepower}, new ideas and techniques will be required.

\small {

}

\newpage

\appendix

\section{A formula}
\label{sec:form}

In this appendix we give a formula for the number of connected circulant graphs $\cay(\ZZZ_n, S)$ with order $n$ and degree $p^\ell-1$ such that $(S_0-S_0)\cap\langle p^\ell \rangle=\{0\}$. Note that, by Theorem \ref{th:thplp}, such a graph $\cay(\ZZZ_n, S)$ admits $\langle p^\ell \rangle$ as a perfect code if in addition $S_0$ is pyramidal and aperiodic. To obtain our formula we need the following two lemmas.

\begin{lemma}
\label{le:le2.2.1}
Let $n=p^\ell m$ be an integer, where $p$ is a prime and $\ell \geq1, m \geq 2$ are integers. Let $\Ga = \cay(\ZZZ_n,S)$ be a circulant graph with order $n$ and degree $p^{\ell}-1$ such that $(S_0-S_0)\cap\langle p^\ell\rangle=\{0\}$. If $\Ga \in \GG_{\mathbf{l}, \mathbf{m}}(n, p^\ell)$ for some sequences $\mathbf{l} = (\ell_1, \ell_2, \ldots, \ell_t), \mathbf{m} = (m_1, m_2, \ldots, m_t)$ of positive integers satisfying $\sum_{i=1}^t \ell_i = \ell$ and $\prod_{i=1}^{t}m_i = m$, then $p$ is not a divisor of $m_i$ for $2 \le i \le t$.
\end{lemma}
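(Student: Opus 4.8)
The plan is to track a single property through the chain of lifts that builds $\Gamma$ in Construction \ref{const:3} and then read off the divisibility constraint from one application of Lemma \ref{pr:pr4.1}(c). For a circulant graph $\cay(\ZZZ_N, X)$ with $|X_0|$ dividing $N$, write $P(X_0)$ for the condition $(X_0 - X_0) \cap \langle |X_0| \rangle = \{0\}$. The first observation is that $P(X_0)$ holds if and only if the $|X_0|$ elements of $X_0$ are pairwise distinct modulo $|X_0|$ (since $\langle |X_0| \rangle$ is exactly the set of multiples of $|X_0|$), equivalently that $X_0$ is a complete system of residues modulo $|X_0|$. Since $(S_0 - S_0)\cap\langle p^\ell\rangle=\{0\}$ is the hypothesis and $|S_0| = p^\ell$, we are given $P(S_0)$.

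The heart of the argument is that both lift operations \emph{reflect} $P$. For $\baf_m$: if $X_0 \in \baf_m(Y_0)$ then $|X_0| = |Y_0|$, and by Lemma \ref{pr:pr4.1}(b) the residue sets $\{x \bmod |Y_0| : x \in Y_0\}$ and $\{x \bmod |X_0| : x \in X_0\}$ coincide, so $P(X_0)$ holds if and only if $P(Y_0)$ holds. For $\bag_{p^r}$: if $X_0 = \bag_{p^r}(Y_0) = \{y + iN : y \in Y_0,\, 0 \le i \le p^r-1\}$ in $\ZZZ_{p^r N}$, where $N$ is the order of the base group, then the slice $i = 0$ shows that if the elements of $X_0$ are distinct modulo $|X_0| = p^r|Y_0|$ then in particular the elements of $Y_0$ are distinct modulo $|Y_0|$; hence $P(X_0) \Rightarrow P(Y_0)$. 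Both reflections are short, the $\baf$ one resting on Lemma \ref{pr:pr4.1}(b) and the $\bag$ one being elementary.

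With these in hand I trace $P$ down the final two stages of Construction \ref{const:3}. We may assume $t \ge 2$, as the range $2 \le i \le t$ is otherwise empty. Writing $\Sigma_1 = \bag_{p^{\ell_1}}(\Gamma_2)$ and $\Gamma = \Gamma_1 \in \baf_{m_1}(\Sigma_1)$, reflection through $\baf_{m_1}$ propagates $P(S_0)$ to the extended connection set of $\Sigma_1$, and reflection through $\bag_{p^{\ell_1}}$ then gives $P(A)$, where $A$ is the extended connection set of $\Gamma_2$. By the cardinality bookkeeping of Construction \ref{const:3}, $\Gamma_2$ has order $p^{\sum_{j=2}^t \ell_j}\prod_{j=2}^t m_j$ and degree $p^{\sum_{j=2}^t \ell_j}-1$, so $|A| = p^{\sum_{j=2}^t \ell_j}$ and the co-degree of $\Gamma_2$ equals exactly $\prod_{j=2}^t m_j$.

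Finally I apply Lemma \ref{pr:pr4.1}(c) in contrapositive form to the step $\Sigma_1 = \bag_{p^{\ell_1}}(\Gamma_2)$: that lemma says that if the base $A$ satisfies $P(A)$ and $p$ divides the co-degree of the base, then the lift fails $P$. Since both $A$ and the extended connection set of $\Sigma_1$ satisfy $P$, we must have $p \nmid \prod_{j=2}^t m_j$, and as $p$ is prime this yields $p \nmid m_i$ for every $2 \le i \le t$, as desired. The step requiring the most care is the $\bag$ reflection together with the matching of base/lift roles and cardinalities, so that Lemma \ref{pr:pr4.1}(c) applies precisely with co-degree $\prod_{j=2}^t m_j$; everything else is bookkeeping.
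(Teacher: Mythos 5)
Your strategy (propagate the hypothesis $(X_0-X_0)\cap\langle|X_0|\rangle=\{0\}$ down the chain of Construction~\ref{const:3}, then read off the divisibility constraint at the single step $\Si_1=\bag_{p^{\ell_1}}(\Ga_2)$) is viable and leaner than the paper's, but the proof you give for the $\bag$-reflection has a genuine gap, and it is exactly the step your argument rests on. You claim that if the elements of $X_0=\bag_{p^r}(Y_0)$ are pairwise distinct modulo $|X_0|=p^r|Y_0|$, then ``the slice $i=0$'' shows the elements of $Y_0$ are pairwise distinct modulo $|Y_0|$. What the slice $i=0$ actually gives is that the elements of $Y_0$ are pairwise distinct modulo $p^r|Y_0|$, which is strictly weaker than distinctness modulo $|Y_0|$ (for instance $0$ and $2$ are distinct modulo $4$ but not modulo $2$), so $P(X_0)\Rightarrow P(Y_0)$ does not follow from what you wrote. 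The implication happens to be true, but proving it takes real work: given distinct $y,y'\in Y_0$ with $y-y'=c|Y_0|$ in $\ZZZ_N$, one must produce slice indices $i,i'$ with $c+(i-i')M\equiv 0\pmod{p^r}$, where $M=N/|Y_0|$ is the co-degree; this congruence is solvable exactly when $\gcd(M,p^r)$ divides $c$, and when $p\mid M$ one must instead use a pair of the form $y$, $y+p^{r-1}N$. That dichotomy (whether $p\mid M$) is the very content of the lemma being proved, so it cannot be waved away. The gap is material to your argument as structured, because Lemma~\ref{pr:pr4.1}(c) as stated includes the property for the base among its hypotheses, and $P(A)$ is precisely what this flawed step was supposed to supply.

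The good news is that your proof admits a short repair that keeps everything you did correctly (the residue characterization of $P$, the $\baf$-reflection via Lemma~\ref{pr:pr4.1}(b), and the bookkeeping for $\Ga_2$): you never needed $P(A)$ at all. Write $n(2)=p^{\ell_2+\cdots+\ell_t}m_2\cdots m_t$ for the order of $\Ga_2$. If $p\mid m_2\cdots m_t$, then taking $a=0\in A$ with slices $i=0$ and $i=p^{\ell_1-1}$ exhibits the two elements $0$ and $p^{\ell_1-1}n(2)=p^{\ell-1}m_2\cdots m_t$ of the extended connection set of $\Si_1$; they are distinct in $\ZZZ_{n/m_1}$ and their difference is divisible by $p^{\ell}$, contradicting $P$ for $\Si_1$, which you did establish correctly from $P(S_0)$. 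In other words, re-derive the failure conclusion of Lemma~\ref{pr:pr4.1}(c) directly (its hypothesis on the base is not needed for that conclusion) rather than citing the lemma with a hypothesis you cannot verify. For contrast, the paper argues bottom-up: it picks the largest index $k$ with $p\mid m_k$, uses Lemma~\ref{pr:pr4.3}(a)--(c) to commute the $p$-free tail past the $\bag$'s and collapse it into $K_{p^{\ell_k+\cdots+\ell_t}}$, so that Lemma~\ref{pr:pr4.1}(a) yields the property at level $k$, and only then invokes Lemma~\ref{pr:pr4.1}(c) --- precisely at the one place where its hypotheses are guaranteed to hold.
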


\begin{proof}
Let $r = \ell_t$. Suppose, for a contradiction, that $p$ is a divisor of $m_i$ for some $i$ between $2$ and $t$. Choose $k$ to be the largest subscript such that $p$ divides $m_k$. Then $2 \le k \le t$ and $p$ is not a divisor of $m_i$ for $k < i \leq t$. For $1\leq i\leq t$, let $\mathbf{l}_i = (\ell_i, \ldots, \ell_t)$, $\mathbf{m}_i = (m_i, \ldots, m_t)$, $\ell(i) = \ell_{i}+\cdots+\ell_{t}$, and $n(i) = p^{\ell_{i}+\cdots+\ell_{t}}m_{i}\cdots m_t = p^{\ell(i)}m_{i}\cdots m_t$. Let $\Gamma_i = \cay(\ZZZ_{n(i)}, S(i))$ be a circulant graph in $\GG_{\mathbf{l}_i, \mathbf{m}_i}(n(i), p^{\ell(i)})$. In particular, $\Ga_k \in \GG_{\mathbf{l}_k, \mathbf{m}_k}(n(k), p^{\ell(k)})$. By part (c) of Lemma \ref{pr:pr4.3}, we have
\[
\GG_{\mathbf{l}_k, \mathbf{m}_k}(n(k), p^{\ell(k)})
\subseteq (\baf_{m_{k}}\cdots \baf_{m_t})(\bag_{p^{\ell_{k}}}\cdots\bag_{p^{\ell_{t-1}}}(K_{p^{\ell_t}})).
\]
Moreover, by part (b) of Lemma \ref{pr:pr4.3}, we have $(\bag_{p^{\ell_{k}}}\cdots\bag_{p^{\ell_t-1}})(K_{p^{\ell_t}})=K_{p^{\ell(k)}}$. So, by part (a) of Lemma \ref{pr:pr4.3}, $\Gamma_k \in (\baf_{m_{k}}\cdots \baf_{m_t})(K_{p^{\ell(k)}}) = \baf_{m_{k} \cdots m_t }(K_{p^{\ell(k)}})$. Note that $K_{p^{\ell(k)}} = \cay(\ZZZ_{p^{\ell(k)}},\ZZZ_{p^{\ell(k)}}\setminus\{0\})$ and $x-y \not\equiv 0 \pmod{p^{\ell(k)}}$ for distinct $x, y \in \ZZZ_{p^{\ell(k)}}$. By part (a) of Lemma \ref{pr:pr4.1}, we have $x-y \not\equiv 0\pmod{p^{\ell(k)}}$ for distinct $x, y \in S(k)_0$. Let $T(\sigma)_0\in \bag_{p^{\ell_{k-1}}}(S(k)_0)$. Then $T(\sigma)_0=\{s+in(k): s\in S(k)_0, 0\leq i\leq p^{\ell_{k-1}}-1\}$, where $n(k)$ is as above. Since $p$ is a divisor of $m_{k}$ and $\ell_{k-1} + \ell(k)= \ell(k-1)$, $p^{\ell_{k-1}-1}n(k)$ is divisible by $p^{\ell(k-1)}$. Thus, by part (c) of Lemma \ref{pr:pr4.1}, there are distinct elements $x, y \in S_0$ such that $x - y\equiv 0\pmod{p^\ell}$, that is, $(S_0-S_0)\cap \langle p^\ell\rangle\neq \{0\}$, which contradicts our assumption.
\end{proof}

For positive integers $d, n$ and prime $p$, let $\nu(d)$ be the number of prime divisors of $d$ and $D_n^p$ the set of square-free divisors of $n$ greater than $1$ and coprime to $p$.

\begin{lemma}
\label{le:legcd}
Let $n=p^\ell m$, where $p$ is a prime and $\ell>1, m\ge 2$ are integers such that $\ell+p\ge 4$.
 If $D_n^p\neq\emptyset$, then the number of elements $T(\sigma) \in \baf_{m}(K_{p^{\ell}})$ satisfying $\gcd(T(\sigma))\neq 1$ are exactly $\sum\limits_{d\in D_n^p}(-1)^{\nu(d)-1}(m/ d)^{\frac{p^\ell-1}{2}}$.
\end{lemma}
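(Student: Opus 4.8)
The plan is to recognize the condition $\gcd(T(\sigma))\ne 1$ as a ``divisible by a common prime'' condition and then to evaluate the count by inclusion--exclusion, reducing everything to a single counting statement about feasible maps. First I would unpack $T(\sigma)_0=\{s+\sigma(s)p^\ell:s\in\ZZZ_{p^\ell}\}$ from Definition~\ref{def:fgmbar}: as $s$ ranges over the complete residue system $\{0,\dots,p^\ell-1\}$, the set $T(\sigma)$ is obtained by lifting each nonzero residue through the choice $\sigma(s)\in\{0,\dots,m-1\}$. Two observations drive the reduction. Since $1\in S$, the element $1+\sigma(1)p^\ell\in T(\sigma)$ is $\equiv 1\pmod p$, so $p\nmid\gcd(T(\sigma))$; and since $\sigma$ is feasible, $T(\sigma)$ is inverse-closed, so for each $t\in T(\sigma)$ its integer representative satisfies $n-t\in T(\sigma)$, whence any common divisor of $T(\sigma)$ also divides $n$. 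Together these force $\gcd(T(\sigma))$ to be a divisor of $m$ coprime to $p$. Consequently $\gcd(T(\sigma))\ne 1$ if and only if some prime $q\in D_n^p$ divides every element of $T(\sigma)$.

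For each prime $q\in D_n^p$ let $A_q$ be the set of feasible $\sigma$ with $q\mid\gcd(T(\sigma))$, and for squarefree $d\in D_n^p$ put $A_d=\bigcap_{q\mid d}A_q$, the set of feasible $\sigma$ with $d\mid\gcd(T(\sigma))$. The desired count is $\bigl|\bigcup_{q}A_q\bigr|$ taken over the prime members of $D_n^p$, and the inclusion--exclusion principle gives
\[
\Bigl|\bigcup_{q}A_q\Bigr|=\sum_{d\in D_n^p}(-1)^{\nu(d)-1}|A_d|.
\]
Thus the whole lemma reduces to the single claim $|A_d|=(m/d)^{(p^\ell-1)/2}$ for every $d\in D_n^p$.

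To count $|A_d|$ I would analyze the two constraints on $\sigma$ separately and then intersect them. The divisibility constraint $d\mid s+\sigma(s)p^\ell$ for each $s\in S$ pins down $\sigma(s)$ modulo $d$: because $\gcd(d,p)=1$, $p^\ell$ is invertible modulo $d$ and $\sigma(s)\equiv -s\,(p^\ell)^{-1}\pmod d$ is uniquely determined. As $d\mid m$ (being a divisor of $n$ coprime to $p$), each residue class modulo $d$ meets $\{0,\dots,m-1\}$ in exactly $m/d$ points, so in isolation each $\sigma(s)$ has $m/d$ admissible values. The feasibility constraint, as computed from $-T(\sigma)=T(\sigma)$ in $\ZZZ_n$, is exactly $\sigma(p^\ell-s)=m-1-\sigma(s)$ for all $s\in S$; this is the involution $s\mapsto p^\ell-s$ on $S$, which for odd $p$ is fixed-point-free and partitions $S$ into $(p^\ell-1)/2$ pairs. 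On each pair only one value is free, and the value forced on the partner is automatically compatible with its own divisibility constraint, since that compatibility reduces precisely to $m\equiv 0\pmod d$. Hence each of the $(p^\ell-1)/2$ pairs independently contributes $m/d$ choices, giving $|A_d|=(m/d)^{(p^\ell-1)/2}$ and, after inclusion--exclusion, the stated formula.

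The step I expect to require the most care is the interaction between the involution and the divisibility constraint, and in particular the behaviour at the fixed point when $p=2$. For $p=2$ the residue $s=2^{\ell-1}$ is fixed by $s\mapsto p^\ell-s$, and feasibility forces $\sigma(2^{\ell-1})=(m-1)/2$; by Remark~\ref{rem:fgmbar}(iv) this is consistent only when $m$ is odd, in which case the forced value automatically satisfies the divisibility congruence modulo $d$ (again because $d\mid m$). Verifying this automatic compatibility on partners and at the fixed point---so that no pair imposes a second, possibly conflicting, condition---is the crux; once it is settled, the free choices decouple completely across the pairs and the power count follows. The remaining items (that the index set of the inclusion--exclusion is exactly $D_n^p$, and that $D_n^p\ne\emptyset$ makes the relevant union nonempty) are routine.
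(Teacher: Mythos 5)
Your proof is correct (for odd $p$, the only case in which the exponent $\frac{p^\ell-1}{2}$ is an integer and the statement well-posed) and it follows essentially the same route as the paper: both arguments establish that exactly $(m/d)^{\frac{p^\ell-1}{2}}$ feasible maps $\sigma$ satisfy $d \mid \gcd(T(\sigma))$ --- you via the congruence $\sigma(s)\equiv -s(p^\ell)^{-1}\pmod d$ combined with the pairing $s\mapsto p^\ell-s$, the paper via counting the multiples of $d$ in each residue class $P_i$ modulo $p^\ell$ combined with the same pairing --- and then finish by inclusion--exclusion over the square-free divisors in $D_n^p$. If anything, your write-up is more careful on two points the paper leaves implicit: that inverse-closure forces every common divisor of $T(\sigma)$ to divide $n$ (which is needed so that the primes in $D_n^p$ exhaust the possible prime divisors of $\gcd(T(\sigma))$, making the inclusion--exclusion index set correct), and the fixed-point obstruction at $s=2^{\ell-1}$ when $p=2$, which the paper's pairing argument silently ignores.
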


\begin{proof}
For each integer $1\leq i\leq p^\ell-1$, set
\[P_i=\{x\in\ZZZ_{n}:x\equiv i\pmod{p^\ell}\}=\{i, i+p^\ell,\ldots,i+(m-1)p^\ell\}.\]
Note that for any feasible map $\sigma$ from $\ZZZ_{p^\ell}\setminus\{0\}$ to $\{0,1,\ldots,(n/p^\ell)-1\}$, $T(\sigma)=\{x+\sigma(x)p^\ell:x\in\ZZZ_{p^\ell}\setminus\{0\}\}$. Thus, for each integer $1\leq i\leq p^\ell-1$, $|T(\sigma)\cap P_i|=1$.
View $\ZZZ_{p^\ell}$ as a set.
Since $p$ is not a divisor of $\gcd(\ZZZ_{p^{\ell}}\setminus\{0\})$, $p$ is not a divisor of $\gcd(T(\sigma))$.

For any integer $d\in D_n^p$, if $\gcd(p,d)=1$, then $\gcd(p^\ell,d)=1$. There are two integers $a, b$ such that $a d+b p^\ell=1$, and so $ad=-bp^\ell+1$. Then for any integer $1\leq i\leq p^\ell-1$, there is an integer $\alpha$ between $0$ and $d-1$ such that $d$ is a divisor of $i+\alpha p^{\ell}$ and for any integer $0\leq \beta\leq \alpha-1$, $d$ is not a divisor of $i+\beta p^{\ell}$.
Clearly, for each integer $j\in \{0,1,\ldots,(m/d)-1\}$, $d$ is also a divisor of $i+(jd+\alpha)p^\ell$.
Thus for each integer $1\le i\le p^\ell-1$ there are exactly $m/d$ elements in $P_i$ such that $d$ is a divisor of these $m$ elements. Moreover, since $T(\sigma)$ is inverse-closed in $\ZZZ_n$, if $s$ is fixed, then $-s$ is also fixed. Hence for each divisor $d\in D_n^p$ there are exactly $(m/d)^{\frac{p^\ell-1}{2}}$ connection sets $T(\sigma)$ in $\baf_{m}(K_{p^\ell})$ such that $d$ is a divisor of $\gcd(T(\sigma))$.

Suppose $m=p^{a(0)}p^{a(1)}_1p^{a(2)}_2\cdots p^{a(b)}_b$, where $a(0)\geq 0$, $p, p_1, p_2, \ldots, p_b$ are different primes and for any integer $1\le i\le b$, $a(i)\ge 1$. For any integer $1\le i\le b$, define $W(i)=\{p_{1(1)}p_{1(2)}\cdots p_{1(i)}: p_{1(1)}, p_{1(2)},\ldots, p_{1(i)} \text{ are different elements of } \{p_1, p_2, \ldots, p_b\}\}$.
For each integer $1\le i\le p^\ell-1$, define
\[
U(i)=\{T(\sigma)_0\in \baf_m(K_{p^\ell}): \text{there exists $d\in W(i)$ such that $d$ is a divisor of $\gcd(T(\sigma))$} \}.
\]
Let $F(i)=\sum_{d\in W(i)}(m/d)^{\frac{p^\ell-1}{2}}$. Note that $U(b)\subseteq U(b-1)\subseteq\cdots\subseteq U(1)$. Thus each extended connection set $T(\sigma)_0$ in $\baf_m(K_{p^\ell})$  satisfying $\gcd(T(\sigma))\neq 1$  is counted by $F(1)$. Moreover, it is known that for any integer $1\le i<j\le b$, $F(i)$ counts extended connection sets $T(\sigma)_0$ in $\baf_m(K_{p^\ell})$ satisfying that there is an integer $d\in W(j)$ such that $d$ is a divisor of $\gcd(T(\sigma))$ exactly $\binom{j}{i}$ times. Thus, in order to count the number of elements $T(\sigma)_0\in \baf_{m}(K_{p^{\ell}})$ satisfying $\gcd(T(\sigma))\neq1$, we only need to consider the number of extended connection sets $T(\sigma)_0\in \baf_{m}(K_{p^\ell})$ satisfying $\gcd(T(\sigma))\in D_n^p$. Since $\sum_{i=1}^{a}(-1)^{i-1}\binom{a}{i}=1$ for any integer $a \ge 1$, for any $1\le j\le b$, $\sum_{i=1}^j(-1)^{j-1}F(j)$ counts each extended connection set $T(\sigma)_0$ in $\baf_{m}(K_{p^\ell})$ satisfying $\gcd(T(\sigma))\in \cup_{i=1}^jW(j)$ exactly once. In particular, taking $j=b$, we obtain that the number of elements $T(\sigma) \in \baf_{m}(K_{p^{\ell}})$ satisfying $\gcd(T(\sigma))\neq 1$ is exactly $\sum\limits_{d\in D_n^p}(-1)^{\nu(d)-1}(m/ d)^{\frac{p^\ell-1}{2}}$.
\end{proof}

\begin{theorem}
\label{le:le5.2.2.2}
Let $n=p^\ell m$, where $p$ is a prime and $\ell\geq 1, m\geq 2$ are integers such that $\ell + p \ge 4$. Then the number of connected circulant graphs $\cay(\ZZZ_n, S)$ with order $n$ and degree $p^\ell-1$ such that $(S_0-S_0)\cap\langle p^\ell \rangle=\{0\}$ is equal to
\begin{equation}
\label{eq:ga1p}
\left(\frac{n}{p^{\ell}}\right)^{r_{\ell}}+\sum_{d\in D_{n}^p}(-1)^{\nu(d)}\left(\frac{n}{p^\ell d}\right)^{r_{\ell}},
\end{equation}
where $r_{\ell} = (p^\ell-1)/2$ if $p$ is odd and $r_{\ell} = 2^{\ell-2}$ if $p = 2$, with the understanding that the second term vanishes when $D_{n}^p=\emptyset$.
\end{theorem}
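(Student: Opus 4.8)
The plan is to identify the graphs being counted with a combinatorially transparent family, count that family exactly, and then remove the disconnected members by inclusion--exclusion. First I would observe that, since $|S_0| = p^\ell = |\ZZZ_n/H|$ where $H = \langle p^\ell\rangle$ has order $m = n/p^\ell$, the hypothesis $(S_0-S_0)\cap\langle p^\ell\rangle = \{0\}$ is equivalent to $\pi_H$ being injective on $S_0$, i.e. to $S_0$ being a transversal of $H$ mapping bijectively onto $\ZZZ_n/H \cong \ZZZ_{p^\ell}$. Hence $f_m(\Gamma)$ is defined (Definition \ref{def:fgm}) and equals $K_{p^\ell}$, so by Remark \ref{rem:fgmbar}(i) the graph $\Gamma$ lies in $\baf_m(K_{p^\ell})$; conversely every member of $\baf_m(K_{p^\ell})$ has degree $p^\ell-1$ and satisfies the transversal condition. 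Thus the graphs to be counted are exactly the connected members of $\baf_m(K_{p^\ell})$. Because for the base graph $K_{p^\ell}$ the map $\sigma\mapsto T(\sigma)$ is injective (from $x = s+\sigma(s)p^\ell$ one recovers $s = x \bmod p^\ell$ and $\sigma(s)=(x-s)/p^\ell$), distinct feasible maps give distinct connection sets, so it suffices to count the feasible maps $\sigma:\ZZZ_{p^\ell}\to\{0,\ldots,m-1\}$ for which $\cay(\ZZZ_n,T(\sigma))$ is connected.

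Next I would compute the total number of feasible maps. Writing $-T(\sigma)=T(\sigma)$ in $\ZZZ_n$ and dividing through by $p^\ell$, feasibility becomes $\sigma(s)+\sigma(-s) = m-1$ for every $s\in\ZZZ_{p^\ell}\setminus\{0\}$ (with $\sigma(0)=0$). Pairing $\ZZZ_{p^\ell}\setminus\{0\}$ under negation then controls the count: when $p$ is odd there are no nonzero involutions, so the set splits into $(p^\ell-1)/2$ negation-pairs, on each of which $\sigma(s)$ is free in $\{0,\ldots,m-1\}$ and $\sigma(-s)$ is forced; when $p=2$, Remark \ref{rem:fgmbar}(iv) forces $m$ odd and the unique involution $2^{\ell-1}$ contributes the single forced value $\sigma(2^{\ell-1})=(m-1)/2$, while the remaining elements split into the free negation-pairs counted by $r_\ell$. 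In either case the number of feasible maps is $m^{r_\ell}=(n/p^\ell)^{r_\ell}$, the leading term of the formula.

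Finally I would subtract the disconnected members. The graph $\cay(\ZZZ_n,T(\sigma))$ is disconnected iff $\gcd(T(\sigma)\cup\{n\})>1$; since $T(\sigma)$ contains an element reducing to $1$ modulo $p^\ell$, hence coprime to $p$, this gcd is always coprime to $p$, so it divides $m$, and disconnectedness is governed precisely by the square-free divisors $d\in D_n^p$ dividing every element of $T(\sigma)$. Lemma \ref{le:legcd} (applied with the free-pair exponent $r_\ell$ in place of $(p^\ell-1)/2$) counts these as $\sum_{d\in D_n^p}(-1)^{\nu(d)-1}(m/d)^{r_\ell}$. Subtracting from the total and writing $m/d = n/(p^\ell d)$ gives
$$
(n/p^\ell)^{r_\ell} - \sum_{d\in D_n^p}(-1)^{\nu(d)-1}(n/(p^\ell d))^{r_\ell} = (n/p^\ell)^{r_\ell} + \sum_{d\in D_n^p}(-1)^{\nu(d)}(n/(p^\ell d))^{r_\ell},
$$
which is the claimed expression (the sum being empty when $D_n^p=\emptyset$). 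The main obstacle is the inclusion--exclusion bookkeeping inside the connectivity count, namely verifying in Lemma \ref{le:legcd} that connection sets whose common divisor is a composite square-free $d$ are counted with the correct signed multiplicity $\binom{\nu(d)}{i}$; the $p=2$ case additionally requires isolating the forced involution before the negation-pairing so that the exponent $r_\ell$ is computed correctly.
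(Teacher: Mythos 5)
Your overall strategy --- identify the counted graphs with $\baf_m(K_{p^\ell})$, count the feasible maps via negation-pairing, then remove the disconnected members by inclusion--exclusion over $D_n^p$ (Lemma \ref{le:legcd}) --- is exactly the paper's strategy; your identification step is in fact more direct, since you obtain $\Gamma \in \baf_m(K_{p^\ell})$ straight from the transversal property of $S_0$, whereas the paper routes through Theorem \ref{th:th2.2.1}, Lemma \ref{le:le2.2.1} and Lemma \ref{pr:pr4.3}. The odd-$p$ case of your count is correct: the feasibility relation $\sigma(s)+\sigma(-s)=m-1$ and the absence of nonzero involutions in $\ZZZ_{p^\ell}$ give exactly $(p^\ell-1)/2$ free choices, hence $m^{(p^\ell-1)/2}$ connection sets, and the gcd/inclusion--exclusion step then matches the paper.

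The $p=2$ case, however, contains a genuine gap, located precisely at the sentence ``the remaining elements split into the free negation-pairs counted by $r_\ell$.'' After the forced involution $2^{\ell-1}$ is removed, the remaining $2^\ell-2$ elements of $\ZZZ_{2^\ell}\setminus\{0\}$ split into $(2^\ell-2)/2=2^{\ell-1}-1$ negation-pairs, not $2^{\ell-2}$ of them; the two quantities agree only when $\ell=2$. So your own (correct) pairing argument produces $m^{2^{\ell-1}-1}$ feasible maps rather than $m^{2^{\ell-2}}$, this exponent propagates through the inclusion--exclusion, and the argument as written does not establish \eqref{eq:ga1p} for $p=2$, $\ell\ge 3$. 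You should know that the paper's own proof asserts the exponent $2^{\ell-2}$ at the same point with no justification, and the discrepancy traces back to the statement itself: for $n=24$, $p=2$, $\ell=3$, $m=3$, the inverse-closed transversals of $\langle 8\rangle$ are determined by three free pairs $\{1,7\},\{2,6\},\{3,5\}$ modulo $8$ together with the forced element $12$, giving $3^3=27$ sets, of which only $S=\{3,6,9,12,15,18,21\}$ yields a disconnected graph; the true count is therefore $26=3^3-1$, while \eqref{eq:ga1p} with $r_\ell=2^{\ell-2}=2$ gives $3^2-1=8$. So this is not a step you can repair by rearranging the pairing: the formula itself needs $r_\ell=2^{\ell-1}-1$ when $p=2$, and your write-up papers over exactly the point where that correction would surface.
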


Denote by $N(n, p^\ell)$ the said number in the theorem. The condition $\ell + p \ge 4$ ensures $\ell \ge 2$ when $p = 2$. (If $\ell = 1$ and $p = 2$, then $N(n, p^\ell) = 0$ as there is no connected circulant graph with order $n \ge 4$ and degree $1$.) Note that when $D_{n}^2=\emptyset$ we have $n = 2^{\ell}$ and \eqref{eq:ga1p} yields $N(n, 2^\ell) = 1$.

\begin{proof}
By Theorem \ref{th:th2.2.1}, a connected circulant graph $\Gamma=\cay(\ZZZ_n,S)$ with order $n$ and degree $p^\ell-1$ admits a perfect code if and only if $\Ga \in \GG_{\mathbf{l}, \mathbf{m}}(n, p^\ell)$ for some $\mathbf{l} = (\ell_1, \ell_2, \ldots, \ell_t)$ and $\mathbf{m} = (m_1, m_2, \ldots, m_t)$ satisfying $\sum_{i=1}^t \ell_i = \ell$ and $\prod_{i=1}^{t}m_i = m$. If in addition $\Gamma$ satisfies  $(S_0-S_0)\cap\langle|S_0|\rangle=\{0\}$, then by Lemma \ref{le:le2.2.1} we have $\gcd(m_i, p)=1$ for $2\le i\le t$. By parts (a)--(c) of Lemma \ref{pr:pr4.3}, we have $\Gamma\in \baf_{n/p^{\ell}}(K_{p^{\ell}})$. Note that any element of $\baf_{n/p^{\ell}}(K_{p^\ell})$ is of the form $T(\sigma) = \{x + \sigma(x)p^{\ell}: x \in \ZZZ_{p^\ell} \setminus \{0\}\}$, where $\s$ is a feasible map from $\ZZZ_{p^\ell}$ to $\{0, 1, \ldots, (n/p^{\ell})-1\}$ (so that $\s(0) = 0$ and $-T(\sigma) = T(\sigma)$ in $\ZZZ_n$).

Assume $p$ is odd. Since for each $x\in \ZZZ_{p^\ell}\setminus\{0\}$, $\sigma(-x)$ is determined by $\sigma(x)$, each $\sigma \in \baf_{n/p^{\ell}}(K_{p^{\ell}})$ is determined by $\sigma(x)$ for $x \in \{1, 2, \ldots, (p^\ell-1)/2\} \subset \ZZZ_{p^{\ell}}\setminus\{0\}$. Since each $\sigma(x)$ can independently take any value from $\{0, 1, \ldots, (n/p^{\ell})-1\}$, it follows that the cardinality of $\baf_{n/p^{\ell}}(K_{p^{\ell}})$ is exactly $(n/p^{\ell})^{\frac{p^{\ell}-1}{2}}$. Since $\Gamma$ satisfies $(S_0-S_0)\cap\langle|S_0|\rangle=\{0\}$ and  $|S_0|=p^\ell$, for any $T(\sigma)\in \baf_{n/p^{\ell}}(K_{p^{\ell}})$ and $y \in T(\sigma)$, we have $y \not\equiv 0\pmod{p^\ell}$. By the definition of $T(\sigma)$, $\gcd(T(\sigma))$ is not divisible by $p$. If $D_n^p=\emptyset$, then $n/p^\ell=p^r$ for some integer $r\geq 0$. Since $\gcd(T(\sigma))$ is a divisor of $n=p^{\ell+r}$, by the definition of $T(\sigma)$ again we see that $\gcd(T(\sigma))=1$ and hence all circulant graphs in $\baf_{n/p^\ell}(K_{p^\ell})$ are connected. Hence, if $D_n^p=\emptyset$, then $N(n, p^\ell) = (n/p^{\ell})^{\frac{p^\ell-1}{2}}$. If $D_n^p\neq\emptyset$, then by Lemma \ref{le:legcd} the number of elements $T(\sigma) \in \baf_{n/p^{\ell}}(K_{p^{\ell}})$ satisfying $\gcd(T(\sigma))\neq 1$ is exactly $\sum\limits_{d\in D_n^p}(-1)^{\nu(d)-1}(n/(p^\ell d))^{\frac{p^\ell-1}{2}}$, and therefore $N(n, p^\ell)$ is given by \eqref{eq:ga1p}.

Now assume $p=2$. Then $\ell \ge 2$ by our assumption, and $n/2$ belongs to every $T(\sigma) \in \baf_{n/2^{\ell}}(K_{2^\ell})$. So there are exactly $(n/2^{\ell})^{2^{\ell-2}}$ different elements $T(\sigma) \in \baf_{n/2^{\ell}}(K_{2^\ell})$, all of which are such that $\gcd(T(\sigma))$ is odd. By Remark \ref{rem:fgmbar}, $\gcd(n/2^\ell,2)=1$. Thus, if $D_{n}^2=\emptyset$, then $n=2^\ell$ and hence $\baf_{n/2^{\ell}}(K_{2^{\ell}})$ contains only one graph, yielding $N(n, 2^\ell) = 1$. If $D_n^2\neq\emptyset$, then there are exactly $\sum\limits_{d\in D_n^2}(-1)^{\nu(d)-1}(n/(2^\ell d))^{2^{\ell-2}}$ different elements $T(\sigma) \in \baf_{n/2^\ell}(K_{2^\ell})$ such that $\gcd(T(\sigma))\neq 1$, and hence $N(n, 2^\ell)$ is given by \eqref{eq:ga1p}.
\end{proof}

\end{document}